\documentclass[12pt]{amsart}
\usepackage{amsmath}
\usepackage{amsfonts}
\usepackage{latexsym}
\usepackage{amssymb}

\pagestyle{plain}

\newtheorem{thm}{Theorem}[section]
\newtheorem{cor}[thm]{Corollary}
\newtheorem{lem}[thm]{Lemma}
\newtheorem{prop}[thm]{Proposition}
\newtheorem{ex}[thm]{Example}



\numberwithin{equation}{section}








\newcommand{\ZZ}{{\mathbb Z}}
\newcommand{\RR}{{\mathbb R}}

\newcommand{\Zd} {{\Bbb Z}^d}



\begin{document}
\bibliographystyle{plain}

\title[Polynomial control on stability, inversion and powers of matrices] {Polynomial control on stability, inversion and powers of matrices on simple graphs}

\author{ Chang Eon Shin and Qiyu Sun}

\address{C. E. Shin:  Department of Mathematics, Sogang University, Seoul, 04109,  Korea.}
\email{shinc@sogang.ac.kr }

\address{Q. Sun: Department of Mathematics, University of Central Florida, Orlando, FL 32828, USA.}
\email{qiyu.sun@ucf.edu}


\thanks{  
The authors are  partially supported by the Basic Science Research Program through the National Research Foundation of Korea (NRF) funded by the Ministry of Education, Science and Technology (NRF-2016R1D1A1B03930571), and the National Science Foundation (DMS-1412413).
}

\subjclass{}

\keywords{}


\maketitle
\begin{abstract}  Spatially distributed networks of large size arise in   a variety of science
and engineering problems, such as
wireless sensor networks and
 smart power grids.
Most of their   features  can be described by properties of their state-space matrices
whose entries have indices in the vertex set of a graph.
In this paper, we introduce novel 
algebras of Beurling type that contain matrices on a connected simple graph having polynomial off-diagonal decay, and we show that they are Banach subalgebras of  ${\mathcal B}(\ell^p), 1\le p\le \infty$, the space of all bounded operators on  the space $\ell^p$ of all $p$-summable sequences.  The $\ell^p$-stability of state-space matrices  is an  essential  hypothesis for
the robustness of spatially distributed networks.
In this paper, we establish the  equivalence among $\ell^p$-stabilities of matrices in  Beurling algebras for different exponents $1\le p\le \infty$,       with  quantitative analysis for the lower stability bounds.
Admission of norm-control inversion
plays a crucial role in some engineering practice. In this paper, we prove that matrices in Beurling subalgebras of ${\mathcal B}(\ell^2)$
have norm-controlled inversion and we find a norm-controlled polynomial with close to optimal degree.
Polynomial estimate to powers of matrices  is important for numerical implementation of spatially distributed networks.
In this paper, we apply our results on norm-controlled
inversion to obtain a  polynomial estimate  to powers of matrices in  Beurling algebras. The polynomial estimate
is a noncommutative extension about convolution powers of a complex function and
is applicable to    estimate the probability  of hopping from one agent  to another agent in a stationary Markov chain   on a spatially distributed network.
\end{abstract}

\section{Introduction}

A spatially distributed network  (SDN) contains a large amount of agents
with limited sensing, data processing, and communication capabilities  for information transmission.
It arises in a variety of science and engineering problems  (\cite{aky02,  chong2003, 
 hebner2017, 
  yick2008}).
The  topology  of an SDN  can be  described  
 by a  
 graph
 \begin{equation}\label{g.graph}
 {\mathcal G}:=(V, E)\end{equation}
 of large size,
 where a vertex in $V$
represents  
 an agent
and an  edge  $(\lambda,\lambda')\in E$
between two vertices $\lambda$ and $\lambda'\in V$ means that a  direct communication link exists. In this paper, we always assume that ${\mathcal G}$ is
 connected and simple. Here a simple graph means that it is  an unweighted, undirected graph
containing no graph loops or multiple edges.
Our motivating examples  are  1)  circular graphs ${\mathcal Z}_N:=(\ZZ/N\ZZ, E_N)$ of order $N\ge 3$, where  $(m,n)\in E_N$ means $m-n\in N\ZZ\pm 1$; and 2) lattice graphs
${\mathcal Z}^d:=(\Zd, E^d), d\ge 1$,
where
$(m,n)\in E^d$ implies that $m$ and $n\in \ZZ^d$ have distance one.
For the graph to describe an SDN, the assumption on its connectivity and simpleness can be understood as that
agents in the SDN can communicate across the entire
network, direct communication links between agents are bidirectional,
agents have the same communication specification,  communication components are not used for data transmission within an agent,
and no multiple direct communication channels  exist between agents \cite{chengsun}.

\smallskip
SDNs could give extraordinary 
capabilities especially when creating
a data exchange network requires significant efforts
or when establishing a centralized facility to process and store
all the information is formidable.  
 A 
comprehensive  mathematical analysis of  SDNs
does not appear to exist yet, 
 and
there is a huge research gap between mathematical theory and engineering practice  \cite{bamieh02, chengsun, Dullerud2004, mjieee08, napoli99,   smale07}. This inspires us to consider various properties of
state-space matrices
\begin{equation}\label{matrix.graph} A=\big(a(\lambda, \lambda') \big)_{\lambda, \lambda' \in V}\end{equation}
of SDNs with indices in the vertex set $V$ of  a graph.  This  work 
 is also motivated by the emerging field
of signal processing on graphs, where matrices of the form \eqref{matrix.graph} are used for  linear processing such as filtering, translation, modulation,
dilation and  downsampling \cite{chen15,  pesenson08,  moura14, moura13,  shuman13}.

An abundant family of SDNs is spatially decaying linear systems whose  state-space matrices have off-diagonal
decay.
 Examples of such systems include 
smart power  grids with sparse interconnection topologies,
multi-agent systems with nearest-neighbor coupling structures, and
(wireless) sensor networks for environment monitoring (\cite{aky02, chengsun, chong2003, Dullerud2004,  hebner2017,  mjieee08, yick2008}).
To describe  off-diagonal decay property of matrices of the form \eqref{matrix.graph}, we introduce
Banach algebras  ${\mathcal B}_{r, \alpha}({\mathcal G})$ of   Beurling type for $1\le r\le \infty$ and $\alpha\ge 0$, see \eqref{beurling.def}
and \eqref{bralpha.norm} in Section \ref{beurling.section}.  Matrices $A=(a(\lambda, \lambda') )_{\lambda, \lambda' \in V}$
in ${\mathcal B}_{r, \alpha}({\mathcal G})$ have their entries  dominated by
a positive decreasing function  $h_A$ with  polynomial  decay,
\begin{equation}
|a(\lambda, \lambda')|\le h_A(\rho(\lambda, \lambda')) \ \ {\rm for \ all}\  \lambda, \lambda'\in V,
\end{equation}
where $\rho(\lambda, \lambda')$ is the geodesic distance between vertices $\lambda, \lambda'\in V$.
For the  lattice graph
${\mathcal Z}^d$, Banach algebras ${\mathcal B}_{r, \alpha}({\mathcal Z}^d)$ are introduced by Beurling  in
 \cite{beurling49}
 for $r=d=1$ and $\alpha=0$, Jaffard in \cite{jaffard90} for $r=\infty$  and Sun in \cite{sunca11}  for $1\le r\le \infty$.

\smallskip
Let $\ell^p:=\ell^p(V), 1\le p\le \infty$, be   Banach spaces of $p$-summable sequences on $V$ with standard norm  $\|\cdot\|_{\ell^p}$,
and ${\mathcal B}(\ell^p)$ be the Banach algebra of all bounded operators on $\ell^p$ with norm $\|\cdot\|_{{\mathcal B}(\ell^p)}$.
We say that
 a  matrix $A\in {\mathcal B}(\ell^p)$ has {\em $\ell^p$-stability} if
  there exists a positive constant $A_p$  such that
\begin{equation}\label{stability.def}
\|Ac\|_{p} \ge A_p \|c\|_{p}    \ \ {\rm for \ all} \ c\in {\ell^p}.
\end{equation}
The
optimal lower $\ell^p$-stability
bound of a matrix $A$
is the maximal number $A_p$   for \eqref{stability.def} to hold.
The $\ell^p$-stability  is an  essential  hypothesis for
some matrices arising in time-frequency analysis, sampling theory, wavelet analysis and many other applied mathematical fields  \cite{aldroubisiamreview01,  
christensenbook,  grochenigbook, 
sunsiam06, xiansun14}.
For the robustness  against bounded noises,
 the sensing matrix  arisen in the sampling and reconstruction procedure of signals on a SDN is required in \cite{chengsun} to have  $\ell^\infty$-stability, however there are some difficulties to verify  $\ell^p$-stability  of a matrix  in a distributed  manner  for $p\ne 2$.

  For a finite graph ${\mathcal G}=(V, E)$ and a matrix $A$ with indices in its vertex set $V$, its $\ell^p$-stability and $\ell^q$-stability  are equivalent to each other for any $1\le p, q\le \infty$, and
its optimal lower stability bounds satisfy
\begin{equation}\label{Mstability}
M^{-|1/p-1/q|}\le \frac{A_q}{A_p}\le  M^{|1/p-1/q|},
\end{equation}
where $M=\# V$ is the number of vertices of the graph ${\mathcal G}$. The above estimation on lower stability bounds is unfavorable for matrices of large size but it cannot be improved  if there is no restriction on the matrix $A$.
Let $d$ be the Beurling dimension of the graph ${\mathcal G}$.  Matrices $A\in {\mathcal B}_{r, \alpha}({\mathcal G})$ with  $1\le r\le \infty$ and $\alpha>d(1-1/r)$ are bounded operators on $\ell^p, 1\le p\le \infty$, and there exists a positive constant $C$  such that
 \begin{equation*}  
 \|A\|_{{\mathcal B}(\ell^p)}
\le C \|A\|_{{\mathcal B}_{r,\alpha}} \ {\rm for \ all} \ A\in {\mathcal B}_{r, \alpha} ({\mathcal G}) \ {\rm and}\ 1\le p\le \infty.
 \end{equation*}
For their lower stability bounds, it is proved  that
\begin{equation*}
A_q>0 \ {\rm if \ and \ only \ if} \ A_p>0
\end{equation*}
in \cite{akramjfa09, sunca11, tesserajfa10} for the infinite lattice graph ${\mathcal Z}^d$,
and that
\begin{equation*}
 A_q>0 \ {\rm if } \ A_2>0
\end{equation*}
 in \cite{chengsun} for any  infinite graph ${\mathcal G}$ with finite
  Beurling dimension and
 $r=\infty$, where $1\le p, q\le \infty$.
In Theorem \ref{stability.thm} of this paper, 
 we provide a quantitative version of  $\ell^p$-stability for  different $1\le p\le \infty$ and prove the following result,
\begin{equation}
  D_1
\Big(\frac{\|A\|_{{\mathcal B}_{r, \alpha}}}{A_p}\Big)^{-D_0|1/p-1/q|}\le
\frac{A_q}{A_p}
 \le  D_2
\Big(\frac{\|A\|_{{\mathcal B}_{r, \alpha}}}{A_p}\Big)^{D_0|1/p-1/q|},
\end{equation}
 for any matrix $A\in {\mathcal B}_{r, \alpha}({\mathcal G})$ with  $1\le r\le \infty$ and $\alpha>d(1-1/r)$,
where $D_0, D_1, D_2$ are  absolute constants {\bf independent} of matrices $A\in {\mathcal B}_{r, \alpha}({\mathcal G})$, exponents $1\le p, q\le \infty$  and the size $M$ of the graph ${\mathcal G}$, cf. \eqref{Mstability}.
The  proof of Theorem \ref{stability.thm} depends on an important estimate to the commutator between
a matrix in the Beurling algebra  ${\mathcal B}_{r, \alpha}({\mathcal G})$  and  a truncation operator. Similar estimate has been used by Sj\"ostrand  in \cite{sjostrand} to establish invertibility of
infinite matrices in the Baskakov-Gohberg-Sj\"ostrand class.

\smallskip
A Banach subalgebra ${\mathcal A}$ of ${\mathcal B}$ is said to be  {\em
inverse-closed}   if an element in ${\mathcal A}$, that is invertible in ${\mathcal B}$, is also invertible in ${\mathcal A}$.
The inverse-closed subalgebras have numerous applications in time-frequency analysis, sampling theory, numerical analysis and optimization,
 and it  has been established
for matrices, integral operators, pseudo-differential operators satisfying
various off-diagonal decay conditions.
The reader may refer to  \cite{balan, baskakov90, 
dahlke,
farrell10, grochenig10, grochenig06, 
 grochenigklotz10,
gltams06,  
 jaffard90, kristal15, 
 moteesun,  sjostrand, 
sunca11,
 suntams07}
    and   the  survey papers \cite{grochenig10, Krishtal11, shinsun13} for historical remarks and recent advances.

A quantitative version of inverse-closedness is the {\em norm-controlled inversion}   \cite{grochenig13-1,  grochenig14-1, nikolski99, romero09, stafney}.
Here an inverse-closed Banach subalgebra ${\mathcal A}$   of ${\mathcal B}$ is said to
 admit  norm control in ${\mathcal B}$
if there exists a continuous function $h$ from $[0, \infty)\times [0, \infty)$ to $[0, \infty)$ such that
\begin{equation}  \label{normcontrol.def}
\|A^{-1} \|_{\mathcal A}\le h(\|A\|_{\mathcal A}, \|A^{-1}\|_{\mathcal B})\end{equation}
for all $A\in {\mathcal A}$ with $A^{-1}\in {\mathcal B}$.
Admission of norm-control inversion  plays  a crucial role in \cite{sunaicm14}  to solve  nonlinear sampling problems  termed with instantaneous companding  and local identification of signals with finite rate of innovation.
Norm-controlled inversion was first studied by Stafney in \cite{stafney}, where it is shown that ${\mathcal B}_{1, 0}({\mathcal Z})$
 does not admit a norm-controlled inversion in ${\mathcal B}(\ell^2)$.
 The polynomial norm-control inversion is established in  \cite{grochenig13-1} for matrices in  differential algebras
  and  \cite{grochenig14-1} for matrices in Besov algebras, Bessel algebras, Dales-Davie algebras and Jaffard algebra.
  In Theorem \ref{inverse.thm} of this paper, we show that Banach algebra
  ${\mathcal B}_{r, \alpha}({\mathcal G})$ with  $1\le r\le \infty$ and $\alpha>d(1-1/r)+1$ admit norm-controlled inversion in ${\mathcal B}(\ell^2)$, and there exists an absolute constant $C$ such that
  \begin{equation}\label{inverseestimate}
  \|A^{-1}\|_{{\mathcal B}_{r, \alpha}}  \le C
\|A^{-1}\|_{{\mathcal B}(\ell^2)}
(\|A^{-1}\|_{{\mathcal B}(\ell^2)}\|A\|_{{\mathcal B}_{r, \alpha}})^{\alpha+d/r}
  \end{equation}
  hold for all $A\in {\mathcal B}_{r, \alpha}({\mathcal G})$ with $A^{-1}\in {\mathcal B}(\ell^2)$.  Moreover, the above polynomial norm-control inversion is close to optimal, as shown in
   Example \ref{agamma.example} that the exponent $\alpha+d/r$  in \eqref{inverseestimate} cannot be replaced by $\alpha+d/r-1-\epsilon$
    for any $\epsilon>0$.
    We remark that a weak version of the norm-controlled  estimate \eqref{inverseestimate}, with the exponent $\alpha+d/r$ in \eqref{inverse.cor.eq1} replaced by a larger exponent $2\alpha+2+2/(\alpha-2)$, is established in \cite{grochenig14-1} for the Jaffard algebra ${\mathcal J}_\alpha({\mathcal Z})={\mathcal B}_{\infty, \alpha}({\mathcal Z})$.


\smallskip
Let ${\mathcal A}$ be a Banach subalgebra of ${\mathcal B}$. We say that ${\mathcal A}$   is its
 {\em differential subalgebra of order $\theta\in (0, 1]$} (\cite{blackadarcuntz91, christ88, kissin94, sunaicm14, suncasp05})  if
there exists a positive constant $C$ such that
\begin{equation}\label{differentialnorm.def}
\|AB\|_{\mathcal A}\le C\|A\|_{\mathcal A} \|B\|_{\mathcal A} \Big (\Big(\frac{\|A\|_{\mathcal B}}{\|A\|_{\mathcal A}}\Big)^\theta +
\Big(\frac{\|B\|_{\mathcal B}}{\|B\|_{\mathcal A}}\Big)^\theta
\Big)
\quad {\rm for \ all} \  A, B \in {\mathcal A}.
\end{equation}
The  differential subalgebras  was introduced in \cite{blackadarcuntz91, kissin94} for $\theta=1$ and
\cite{christ88, sunaicm14, suncasp05} for $\theta\in (0, 1)$. 
In  \cite{christ88, grochenigklotz10}, it is shown that  a $C^*$-subalgebras ${\mathcal A}$ of ${\mathcal B}$  with a common unit admits  norm controlled inversion
if ${\mathcal A}$ is also a differential subalgebra.
The reader may refer to
\cite{blackadarcuntz91,  christ88, kissin94, moteesun,  
shinsun13, sunaicm14,  suntang} and references therein
for historical remarks and recent advances in
 operator theory, harmonic analysis, non-commutative geometry, numerical analysis and  optimization.
The differential norm inequality \eqref{differentialnorm.def}
is satisfied by many  Banach subalgebras of ${\mathcal B}(\ell^2)$ \cite{grochenigklotz10,  sunca11, suntams07, suncasp05}.
For $1\le r\le \infty$ and $\alpha>d(1-1/r)$,  it is shown in Proposition \ref{subdiff.prop} that Banach algebras
  ${\mathcal B}_{r, \alpha}({\mathcal G})$
  are differential subalgebra of ${\mathcal B}(\ell^2)$ with order   $\theta_{r, \alpha}=2(\alpha-d+d/r)/(1+2\alpha-2d+2d/r)$.
  Applying  the differential property \eqref{differentialnorm.def}
 repeatedly and using the argument in \cite[Proposition 2.4]{sunaicm14}, we have the following subexponential estimate for  the norms of powers $A^n, n\ge 1$, in ${\mathcal B}_{r, \alpha}({\mathcal G})$,
\begin{equation*} 
\|A^n\|_{{\mathcal B}_{r, \alpha}}\le   \|A\|_{{\mathcal B}(\ell^2)}^n
\Bigg( C\frac{ \|A\|_{{\mathcal B}_{r, \alpha}}}{\|A\|_{{\mathcal B}(\ell^2)}}\Bigg)^{\frac{\theta_{r, \alpha}}{1+\theta_{r, \alpha}} n^{\log_2 (1+\theta_{r, \alpha})}}, \ \ n\ge 1,\end{equation*}
where $C$ is an absolute constant independent of integers $n\ge 1$ and  matrices $A\in {\mathcal B}_{r, \alpha}({\mathcal G})$.
In Theorem \ref{power.thm} of this paper, we refine the above estimate to show that  powers of a matrix in
${\mathcal B}_{r, \alpha}({\mathcal G})$ with $1\le r\le \infty$ and $\alpha>d(1-1/r)+1$ have polynomial growth,
\begin{equation}  \label{power.thm.eq000}
\|A^n\|_{{\mathcal B}_{r, \alpha}}
\le   C     n
\Big(
\frac{ n
\|A\|_{{\mathcal B}_{r, \alpha}}}{
\|A\|_{{\mathcal B}(\ell^2)} } \Big)^{\alpha+d/r}   \|A\|_{{\mathcal B}(\ell^2)}^{n}, \ n\ge 1.\end{equation}
 Moreover, the above estimate is close to optimal, as shown in \eqref{power.cor.eq2} that
the exponent $\alpha+d/r$ in \eqref{power.thm.eq000} cannot be replaced by $\alpha+d/r-1-\epsilon$ for any $\epsilon>0$.
The polynomial norm estimate in \eqref{power.thm.eq000} is a noncommutative extension about convolution powers of a complex function on $\Zd$, cf. \cite{randle14,  randle15, thomee69, thomee65} and
 \eqref{ank.estimate}.
 The power estimate in \eqref {power.thm.eq000} is also applicable to estimate the probability  $ \Pr(X_{n}=\lambda |X_{1}=\lambda') $ of hopping from one agent $\lambda$ to another agent $\lambda'$ in a stationary Markov chain  $X_n, n\ge 1$, on a spatially distributed network, see Corollary \ref{markov.cor}.

\smallskip
The paper is organized as follows. In Section \ref{preliminary.section},  we  recall some  preliminaries on  connected simple graphs  ${\mathcal G}$ and  provide two basic estimates 
about their geometry. 
In Section \ref{beurling.section},
 we introduce novel  algebras   of matrices, ${\mathcal B}_{r, \alpha}({\mathcal G})$ with $1\le r\le \infty$ and $\alpha\ge 0$,
  and we
prove that they are differential and  inverse-closed subalgebra of ${\mathcal B}(\ell^2)$.
In Section \ref{stability.section}, we establish the equivalence among  $\ell^p$-stabilities of matrices in the Beurling algebra ${\mathcal B}_{r, \alpha}({\mathcal G})$  for different exponents $1\le p\le \infty$, and we further show that their lower stability bounds are controlled  by some polynomials.
In Section \ref{inversion.section}, we prove that the Beurling  algebra ${\mathcal B}_{r, \alpha}({\mathcal G})$
 admit norm-controlled inversion and a polynomial can be selected to be the norm-controlled function  $h$ in \eqref{normcontrol.def}.
In Section \ref{power.section}, we consider noncommutative extension of convolution powers and show that
 norms of powers $A^n, n\ge 1$, of a matrix  $A\in {\mathcal B}_{r,\alpha}$ with $1\le r\le \infty$ and $\alpha>d(1-1/r)$ are dominated by a   polynomial.

\smallskip
Notation: $\ZZ_+$ contains all nonnegative integers,   $\lfloor t\rfloor, \lceil t\rceil$ and  $t_+=\max(t, 0)$ of a number $t$
are
the greatest preceding integer, the least succeeding integers and the positive part respectively,
and
for a set $F$
denote its cardinality and characteristic function by $\#F$  and $\chi_F$  respectively.
In this paper, the capital letter $C$ is an absolute constant which is not necessarily the same at each occurrence.

\section{Preliminaries on connected simple graphs}
\label{preliminary.section}

In this section, we  recall some  concepts on  connected simple graphs, and we
establish some estimates  
about their geometry. 

\smallskip
 Let $\mathcal{G}:=(V,E)$ be a connected simple graph.
Denote   by $\rho$
the {\em geodesic distance}  on  ${\mathcal G}$, which is the nonnegative function  on $V\times V$ such that $\rho(\lambda,\lambda)=0$  for  all vertices $\lambda\in V$,  and  $\rho(\lambda,\lambda')$ is the number of edges in a shortest path
connecting distinct vertices $\lambda, \lambda'\in V$ (\cite{chungbook}).
For some real-world applications of SDNs,
communication between  two distinct agents happens by transmitting information through the chain of intermediate agents connecting them using a shortest path, and  the geodesic distance  is widely used to measure the communication cost  to data exchange.

The geodesic distance $\rho$ on ${\mathcal G}$ is a metric on $V$.
For the simple graph
${\mathcal Z}^d $, 
the geodesic distance between $m=(m_1, \ldots, m_d)$ and $n=(n_1, \ldots, n_d)\in \Zd$ is given by
$\rho(m, n)=\sum_{i=1}^d |m_i-n_i|.$

With the  geodesic distance $\rho$ on ${\mathcal G}:=(V,E)$, we denote the closed ball with center $\lambda\in V$ and radius $R$ by
$$ B(\lambda, R):=\{\lambda' \in V: \ \rho(\lambda, \lambda')\le R \}, $$
and  the {\em counting measure} on $V$  
 by $\mu$, where $\mu(F)$ is the number of vertices in $F$ for any $F\subset V$. The counting measure $\mu$ is  said to be a {\it  doubling measure} (\cite{ms79, yyh13}) if there exists a positive constant $D_0({\mathcal G})$ such that
\begin{equation}\label{doubling}
\mu(B(\lambda, 2R))\le D_0({\mathcal G}) \mu(B(\lambda,R)) \  \ \text{for all}\ \lambda \in V \text{ and } R\ge 0.
\end{equation}
The minimal constant $ D_0({\mathcal G})$ in \eqref{doubling}  is known as the {\em doubling constant} of the measure $\mu$.
Under the doubling assumption to the measure $\mu$, the triple $(V, \rho, \mu)$ is   a space of homogeneous type. 
 The reader may refer to \cite{ms79, yyh13} for harmonic analysis on spaces of homogeneous type.

We say that the counting measure $\mu$ on the graph ${\mathcal G}$  has {\it  polynomial growth} if there exist positive constants $D_1({\mathcal G})$ and $d:=d({\mathcal G})$ such that
\begin{equation}\label{polynomialgrowth}
\mu(B(\lambda,R))\le D_1({\mathcal G}) (R+1)^{d} \ \ \text{for all} \ \lambda\in V \text{ and } R\ge 0.
\end{equation}
The minimal constants  $d$ and $D_1({\mathcal G})$  in
 \eqref{polynomialgrowth}
 are called as the {\em Beurling dimension} and {\em density} of the graph ${\mathcal G}$  (\cite{chengsun}).  
For the simple graph ${\mathcal Z}^d$, its  Beurling dimension   is the same as the Euclidean dimension $d$.
  We remark that a simple graph  ${\mathcal G}$  with its counting measure $\mu$ satisfying the doubling condition \eqref{doubling} has finite Beurling dimension,
 \begin{equation*}
\mu(B(\lambda,R))\le D_0({\mathcal G}) (R+1)^{\log_2 D_0({\mathcal G})} \ \ \text{ for all}\  \lambda\in V \text{ and } R\ge 0,
\end{equation*}
 where $D_0({\mathcal G})$ is the doubling constant of the  measure $\mu$.

We say that the  counting measure  $\mu$ on the graph ${\mathcal G}$  is
{\em normal} if  there exist $D_1({\mathcal G})$ and $D_2({\mathcal G})$ such that
\begin{equation}\label{ahlfors.def}
 D_2({\mathcal G}) (R+1)^d \le \mu(B(\lambda, R))\le  D_1({\mathcal G}) (R+1)^{d}
\end{equation}
  for all $\lambda\in V$  and  $0\le R\le {\rm diam}({\mathcal G})$, the diameter of the graph ${\mathcal G}$.
  One may verify that the counting measures on ${\mathcal Z}^d$  and  ${\mathcal Z}_N, N\ge 1$, are normal, and
    a normal measure has the doubling property
\eqref{doubling} and the polynomial growth property  \eqref{polynomialgrowth}.
 The reader may refer to \cite{keith08, ms79, tyson01, yyh13} and references therein
for normal measures.

\smallskip

We conclude this section with a proposition 
on geometry of a connected simple graph with finite Beurling dimension.

\begin{prop} \label{graph.pr}
Let ${\mathcal G}:=(V,E)$ be a connected simple graph with Beurling dimension $d$,
and  $h:=\{h(n)\}_{n=0}^\infty$ be a positive decreasing sequence.
 Then the following statements hold.

\begin{itemize}

\item[{(i)}] For any vertex $\lambda\in V$ and   integer $s\ge 0$,
\begin{equation}\label{sum2}
 \sum_{\rho(\lambda, \lambda')\le s} \rho(\lambda, \lambda')  h(\rho(\lambda, \lambda')) \le (d+1) D_1({\mathcal G}) \sum_{n=0}^s h(n) (n+1)^{d}.
\end{equation}

\item[{(ii)}] If  $\sum_{n=0}^\infty  h(n) (n+1)^{d-1}<\infty$, then
\begin{equation} \label{jaffardpr.pf.eq2}
\sum_{\rho(\lambda, \lambda')\ge s} h(\rho(\lambda,\lambda'))
\le  D_1({\mathcal G})\Big( (s+1)^d h(s)+ d\sum_{n=s+1}^\infty h(n) (n+1)^{d-1}\Big)
\end{equation}
for any vertex $\lambda\in V$ and   integer $s\ge 0$.
\end{itemize} 
\end{prop}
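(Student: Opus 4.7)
The plan is to prove both parts by first stratifying the sums according to the level sets $A_n := \{\lambda' \in V : \rho(\lambda, \lambda') = n\}$, writing $\#A_n = \mu(B(\lambda, n)) - \mu(B(\lambda, n-1))$ with the convention $\mu(B(\lambda, -1)) := 0$, and then applying a discrete Abel summation against $\mu$. The polynomial growth bound $\mu(B(\lambda, n)) \le D_1({\mathcal G})(n+1)^d$ enters at the very end, while the monotonicity of $h$ controls the sign of the differences that appear after the Abel rearrangement.

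For (i), after stratifying, Abel summation turns $\sum_{n=0}^s n h(n)\#A_n$ into $s h(s)\mu(B(\lambda,s)) + \sum_{n=0}^{s-1}[n h(n) - (n+1)h(n+1)]\mu(B(\lambda,n))$. Writing $n h(n) - (n+1)h(n+1) = n(h(n)-h(n+1)) - h(n+1)$ and discarding the nonnegative piece $h(n+1)$ bounds this above by $s h(s)\mu(B(\lambda,s)) + \sum_{n=0}^{s-1} n(h(n)-h(n+1))\mu(B(\lambda,n))$. Inserting polynomial growth and regrouping by $h(n)$ collapses each interior coefficient to $n(n+1)^d - (n-1)n^d = n\bigl[(n+1)^d - n^d\bigr] + n^d$, and the boundary coefficient to $s(s+1)^d - (s-1)s^d$ of identical form. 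The elementary estimate $(n+1)^d - n^d \le d(n+1)^{d-1}$ (for $d \ge 1$) bounds each such coefficient by $(d+1)(n+1)^d$, producing the claimed constant.

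For (ii), the same Abel summation applied to the truncated tail $\sum_{n=s}^N h(n)\#A_n$ yields $h(N)\mu(B(\lambda,N)) - h(s)\mu(B(\lambda,s-1)) + \sum_{n=s}^{N-1}(h(n)-h(n+1))\mu(B(\lambda,n))$, and one then lets $N \to \infty$. The only genuine technical point is verifying that the boundary term $h(N)\mu(B(\lambda,N)) \le D_1({\mathcal G}) h(N)(N+1)^d$ vanishes: using the monotonicity of $h$, one compares $h(N) N^d$ with the tail $\sum_{n=\lfloor N/2\rfloor}^N h(n)(n+1)^{d-1}$ on an interval where $h$ is nearly constant and the factor $(n+1)^{d-1}$ is bounded below by a multiple of $N^{d-1}$, and the summability hypothesis $\sum h(n)(n+1)^{d-1}<\infty$ then forces $h(N)(N+1)^d \to 0$. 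After passing to the limit, I would drop the nonpositive term $-h(s)\mu(B(\lambda,s-1))$, apply polynomial growth, and telescope: $\sum_{n=s}^\infty(h(n)-h(n+1))(n+1)^d = h(s)(s+1)^d + \sum_{n=s+1}^\infty h(n)\bigl[(n+1)^d - n^d\bigr]$, and the estimate $(n+1)^d - n^d \le d(n+1)^{d-1}$ delivers the desired bound. This boundary-decay verification is the one real obstacle; everything else is routine Abel-summation bookkeeping to make the constants come out exactly as stated.
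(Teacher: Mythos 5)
Your proposal is correct and follows essentially the same route as the paper: stratify by geodesic spheres, apply Abel summation against $\mu(B(\lambda,n))$, use monotonicity of $h$ together with the polynomial growth bound, and resum using $(n+1)^d-n^d\le d(n+1)^{d-1}$. Your explicit verification that $h(N)(N+1)^d\to 0$ (via monotonicity and the summability hypothesis) is a point the paper glosses over, but otherwise the two arguments coincide.
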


\begin{proof}  
(i).\   
Given  a vertex $\lambda\in V$ and an integer $s\ge 0$, we obtain
 \begin{eqnarray*}  
& \hskip-0.08in  &  \hskip-0.08in  \sum_{\rho(\lambda, \lambda')\le s} \rho(\lambda, \lambda') h(\rho(\lambda,\lambda'))\\
  &  \hskip-0.08in  = &  \hskip-0.08in    \sum_{n=0}^s  n h(n) \big(\mu(B(\lambda,n))- \mu(B(\lambda,n-1))\big)
 \nonumber\\
 & \hskip-0.08in  = &  \hskip-0.08in
 s h(s) \mu(B(\lambda,s))+  \sum_{n=0}^{s-1} \mu(B(\lambda,n)) \big (n h(n)-(n+1) h(n+1)\big)\nonumber\\
 & \hskip-0.08in  \le  &  \hskip-0.08in  D_1({\mathcal G}) \Big(
  s h(s) (s+1)^d+  \sum_{n=0}^{s-1} n (h(n)-h(n+1)) (n+1)^d\Big)\nonumber\\
 & \hskip-0.08in  = & \hskip-0.08in  D_1({\mathcal G})  \sum_{n=1}^{s}  h(n) \big(n (n+1)^d-(n-1) n^d\big),
 \end{eqnarray*}
 where the  inequality follows from the polynomial growth  property \eqref{polynomialgrowth} and the monotonic assumption on the nonnegative sequence $\{h(n)\}_{n=0}^\infty$.
  Hence \eqref{sum2} follows.

(ii). \
Take a vertex $\lambda\in V$ and  an integer $s\ge 0$. Similar to the first argument, we have
 \begin{eqnarray*}   &  &
  \sum_{\rho(\lambda, \lambda')\ge s} h(\rho(\lambda,\lambda'))
 \nonumber\\
&  \hskip-0.08in \le &  \hskip-0.08in
\lim_{N\to \infty} h(N) \mu(B(\lambda,N))+  \sum_{n=s}^{N-1} \mu(B(\lambda,n)) (h(n)-h(n+1)\big)\nonumber\\
 &  \hskip-0.08in  \le & \hskip-0.08in  D_1({\mathcal G})
\lim_{N\to \infty} \Big(h(N) (N+1)^d+  \sum_{n=s}^{N-1}  (n+1)^d (h(n)-h(n+1)\big)\Big)\nonumber\\
& \hskip-0.08in  = &  \hskip-0.08in  D_1({\mathcal G})
\Big( (s+1)^d h(s)+\sum_{n=s+1}^\infty h(n) ((n+1)^d-n^d)\Big). 
\end{eqnarray*}
This proves  \eqref{jaffardpr.pf.eq2}. 
\end{proof}

\section{Matrices with polynomial off-diagonal decay}
\label{beurling.section}

Let ${\mathcal G}$ be a connected simple graph with Beurling dimension $d$.
For $1\le r\le \infty$ and $\alpha\ge 0$,  define 
\begin{equation}\label{beurling.def}
{\mathcal B}_{r, \alpha}({\mathcal G}):=\Big\{A=\big(a(\lambda, \lambda') \big)_{\lambda, \lambda' \in V}:  \  \  \|A\|_{{\mathcal B}_{r, \alpha}}<\infty\Big\},
\end{equation}
where $h_A(n)=\sup_{\rho(\lambda, \lambda')\ge n} |a(\lambda, \lambda')|, n\ge 0$,
and 
\begin{equation}\label{bralpha.norm}
\|A\|_{{\mathcal B}_{r, \alpha}}:=\left\{\begin{array}{ll}
\big(\sum_{n=0}^\infty |h_A(n)|^r (n+1)^{\alpha r+d-1} \big)^{1/r}  &  {\rm if} \ 1\le  r<\infty\\
 \sup_{n\ge 0} h_A(n) (n+1)^\alpha
  & {\rm if} \ r=\infty,\end{array}
  \right.
\end{equation}
\cite{baskakov14, beurling49, chengsun, jaffard90,  sunca11}.
We will use the abbreviated notation ${\mathcal B}_{r, \alpha}$ instead of
${\mathcal B}_{r, \alpha}({\mathcal G})$ if there is no confusion.
The commutative subalgebra
\begin{equation}\label{classicalbeurling.def}A^*:=\Big\{ \big(a(k-k')\big)_{k, k'\in \ZZ}: \ \sum_{n=0}^\infty \sup_{|k|\ge n} |a(k)|<\infty\Big\}
\end{equation}
of the class ${\mathcal B}_{r, \alpha}({\mathcal G})$  with    $r=1,\alpha=0$ and ${\mathcal G}={\mathcal Z}$  was introduced  by Beurling to study contraction
 of periodic functions  \cite{beurling49}. The set  ${\mathcal B}_{r, \alpha}({\mathcal G})$ with
 $r=+\infty,\alpha\ge 0$ and ${\mathcal G}={\mathcal Z}$
  is the Jaffard class  ${\mathcal J}_\alpha({\mathcal Z})$ of matrices with polynomial off-diagonal decay (\cite{chengsun, jaffard90}), since
$$\|A\|_{{\mathcal J}_\alpha}:=\sup_{i,j\in \ZZ} |a(i,j)| (1+|i-j|)^\alpha= \|A\|_{{\mathcal B}_{\infty, \alpha}} \ \ {\rm for} \ A:=\big(a(i,j) \big)_{i,j \in \ZZ}.$$
 The set ${\mathcal B}_{r, \alpha}({\mathcal G})$ with  $1\le r<\infty,\alpha\ge 0$ and ${\mathcal G}={\mathcal Z}$ is defined in \cite{sunca11}
 to contain all matrices  $A=(a(i,j))_{i,j\in \ZZ}$ with  
  \begin{equation} \label{oldbralpha.norm}
\|A\|_{{\mathcal B}_{r, \alpha}}^*=\Big(\sum_{n=0}^\infty \Big(\sup_{|i-j|\ge n} |a(i,j)| (1+|i-j|)^\alpha\Big)^r\Big)^{1/r}<\infty.
\end{equation}
We remark that norms in \eqref{bralpha.norm} and \eqref{oldbralpha.norm}  
 are equivalent to each other,
\begin{equation} \label{normequivalence.10}
\|A\|_{{\mathcal B}_{r, \alpha}}\le \|A\|_{{\mathcal B}_{r, \alpha}}^*\le 2^{2(\alpha+1/r)} \|A\|_{{\mathcal B}_{r, \alpha}}\  \ {\rm for \ all} \ A\in {\mathcal B}_{r, \alpha}({\mathcal Z}).
\end{equation}
The first inequality  in \eqref{normequivalence.10} follows immediately from
\eqref{bralpha.norm} and \eqref{oldbralpha.norm}, while the second estimate holds because for  any $A:=\big(a(i,j) \big)_{i,j \in \ZZ}\in
{\mathcal B}_{r, \alpha}({\mathcal Z})$ we have
\begin{eqnarray*}
(\|A\|_{{\mathcal B}_{r, \alpha}}^*)^r & \le &  \sup_{i,j\in \ZZ} |a(i,j)|^r (1+|i-j|)^{\alpha r} \\
 &  & + \sum_{l=0}^\infty  2^l
\Big(\sup_{|i-j|\ge 2^l} |a(i,j)| (1+|i-j|)^\alpha\Big)^r\\
& \le &   (h_A(0))^r
+\sum_{m=0}^\infty \big( h_A(2^m)\big)^r 2^{(m+1)\alpha r }\\
& &
 +
\sum_{l=0}^\infty  2^l \sum_{m=l}^\infty \big( h_A(2^m)\big)^r 2^{(m+1)\alpha r }\\
& \le &  (h_A(0))^r 
+ 2^{2\alpha r+2} \sum_{m=0}^\infty \big( h_A(2^m)\big)^r  2^{(m-1)(\alpha r+1)}\\
 & \le &  2^{2\alpha r+2} (\|A\|_{{\mathcal B}_{r, \alpha}})^r.
\end{eqnarray*}
Due to the above equivalence \eqref{normequivalence.10}, we follow the terminology in \cite{sunca11} to call ${\mathcal B}_{r, \alpha}({\mathcal G})$  as a Beurling class of matrices with polynomial off-diagonal decay.

\vskip0.1in

 Define the Schur norm $\|A\|_{{\mathcal S}}$ of  a  matrix $A:=\big(a(\lambda, \lambda') \big)_{\lambda, \lambda'\in V}$    by
\begin{equation} \|A\|_{{\mathcal S}}=
\max \Big(\sup_{\lambda\in V} \sum_{\lambda'\in V} |a(\lambda, \lambda')|,\
\sup_{\lambda'\in V} \sum_{\lambda\in V} |a(\lambda, \lambda')|\Big).
\end{equation}
Shown in the proposition below are some elementary properties of the Beurling class
${\mathcal B}_{r, \alpha}({\mathcal G})$, with their proofs postponed to the end of this section.

\begin{prop}
\label{beurling.prop}
Let $1\le q, r\le \infty, \alpha\ge 0$,  ${\mathcal G}:=(V,E)$ be a connected simple graph with Beurling dimension $d$.
Then the following statements hold.

\begin{itemize}

 \item [{(i)}]
${\mathcal B}_{1, 0}({\mathcal G})\subset {\mathcal S}\subset {\mathcal B}(\ell^q)$  and
\begin{equation} \label{beurling.prop.eq1}    \|A\|_{{\mathcal B}(\ell^q)}\le \|A\|_{{\mathcal S}}
\le d D_1({\mathcal G}) \|A\|_{{\mathcal B}_{1,0}} \ {\rm for \ all} \ A\in {\mathcal B}_{1, 0} ({\mathcal G}). \end{equation}

\item[{(ii)}] ${\mathcal B}_{r'', \beta}({\mathcal G})\subset {\mathcal B}_{r, \gamma}({\mathcal G})\subset {\mathcal B}_{r, \alpha}({\mathcal G})\subset {\mathcal B}_{r'', \alpha}({\mathcal G})$
    for all  $r''\ge r$, $\gamma\ge \alpha$ and $\beta> \gamma+d(1/r-1/r'')$. Moreover
    \begin{eqnarray}\label{beurling.prop.eq2}
     \|A\|_{{\mathcal B}_{r'', \alpha}} &\hskip-0.08in  \le & \hskip-0.08in   \|A\|_{{\mathcal B}_{r, \alpha}}\le  \|A\|_{{\mathcal B}_{r, \gamma}}
     \nonumber\\
     & \hskip-0.08in  \le & \hskip-0.08in   \Big(\frac{\beta-\gamma -(d-1)(1/r-1/r'')}{\beta-\gamma-d(1/r-1/r'')}\Big)^{1/r-1/r''}
      \|A\|_{{\mathcal B}_{r'', \beta}}
       \end{eqnarray}
      for  all $A\in {\mathcal B}_{r'', \beta} ({\mathcal G})$.

    \item[{(iii)}] ${\mathcal B}_{r, \alpha}({\mathcal G})$ is a Banach algebra if $\alpha> d(1-1/r)$, and
         \begin{eqnarray} \label{beurling.prop.eq3}
        \|AB\|_{{\mathcal B}_{r, \alpha}}   &\hskip-0.08in\le &\hskip-0.08in
         2^{\alpha+d/r} \big(\|B\|_{\mathcal S} \|A\|_{{\mathcal B}_{r, \alpha}} +
\|A\|_{\mathcal S} \|B\|_{{\mathcal B}_{r, \alpha}}\big)\nonumber\\
  &\hskip-0.08in \le &\hskip-0.08in  2^{\alpha+1+d/r} d D_1({\mathcal G}) \Big(\frac{\alpha-(d-1)(1-1/r)}{\alpha-d(1-1/r)}\Big)^{1-1/r}
           \|A\|_{{\mathcal B}_{r, \alpha}}\|B\|_{{\mathcal B}_{r, \alpha}}\end{eqnarray}
           for all $ A, B\in {\mathcal B}_{r, \alpha}({\mathcal G})$.

            \item[{(iv)}] ${\mathcal B}_{r, \alpha}({\mathcal G})$ is solid if $\alpha\ge 0$, i.e., if $A=(a(\lambda, \lambda'))_{\lambda, \lambda'\in V}$ and $B=(b(\lambda, \lambda'))_{\lambda, \lambda'\in V}$ satisfies
                $|a(\lambda, \lambda')|\le |b(\lambda, \lambda')|$ for all $\lambda, \lambda'\in V$, then $\|A\|_{{\mathcal B}_{r, \alpha}}\le \|B\|_{{\mathcal B}_{r, \alpha}}$.

\end{itemize}

\end{prop}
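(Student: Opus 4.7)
Part (iv) follows directly from the definition: $|a(\lambda,\lambda')|\le|b(\lambda,\lambda')|$ entrywise implies $h_A(n)\le h_B(n)$ for every $n\ge 0$, which, inserted into \eqref{bralpha.norm}, gives $\|A\|_{{\mathcal B}_{r,\alpha}}\le\|B\|_{{\mathcal B}_{r,\alpha}}$. For (i), the Schur test yields $\|A\|_{{\mathcal B}(\ell^q)}\le\|A\|_{\mathcal S}$ for all $1\le q\le\infty$; to bound $\|A\|_{\mathcal S}$, I would estimate $\sum_{\lambda'\in V}|a(\lambda,\lambda')|\le\sum_{\lambda'\in V}h_A(\rho(\lambda,\lambda'))$ and apply Proposition~\ref{graph.pr}(ii) with $s=0$, obtaining $D_1({\mathcal G})\big(h_A(0)+d\sum_{n\ge 1}h_A(n)(n+1)^{d-1}\big)\le d D_1({\mathcal G})\|A\|_{{\mathcal B}_{1,0}}$ (using $d\ge 1$); the column-sum bound is identical.

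For (ii), the middle inequality $\|A\|_{{\mathcal B}_{r,\alpha}}\le\|A\|_{{\mathcal B}_{r,\gamma}}$ is immediate from $\gamma\ge\alpha$. For the first inequality (with $r\le r''$), I would recognise $\|A\|_{{\mathcal B}_{r,\alpha}}$ as the $\ell^r$-norm of the sequence $b_n:=h_A(n)(n+1)^{\alpha+(d-1)/r}$, note that $(n+1)^{\alpha r''+d-1}\le(n+1)^{(\alpha+(d-1)/r)r''}$ when $d\ge 1$ and $r\le r''$ so $\|A\|_{{\mathcal B}_{r'',\alpha}}\le\|b\|_{\ell^{r''}}$, and close via the classical unit-norm embedding $\|b\|_{\ell^{r''}}\le\|b\|_{\ell^r}$. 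The third inequality is the main computation: apply H\"older with conjugate exponents $r''/r$ and $r''/(r''-r)$ to the factorization
\[
h_A(n)^r(n+1)^{\gamma r+d-1}=\big(h_A(n)^{r''}(n+1)^{\beta r''+d-1}\big)^{r/r''}(n+1)^{\tau},
\]
where $\tau=r(\gamma-\beta)+(d-1)(r''-r)/r''$. The residual series $\sum_n(n+1)^{-\sigma}$ has exponent $\sigma=rr''(\beta-\gamma)/(r''-r)-(d-1)$, which exceeds $1$ precisely under the hypothesis $\beta>\gamma+d(1/r-1/r'')$; estimating it by $\sigma/(\sigma-1)$ and taking $r$-th roots reproduces exactly the stated constant.

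For (iii), the key step is a pointwise decay bound on $h_{AB}$. Whenever $\rho(\lambda,\lambda')\ge n$, the triangle inequality forces every $\lambda''\in V$ into at least one of $\rho(\lambda,\lambda'')\ge\lceil n/2\rceil$ or $\rho(\lambda'',\lambda')\ge\lceil n/2\rceil$; splitting $(AB)(\lambda,\lambda')=\sum_{\lambda''}a(\lambda,\lambda'')b(\lambda'',\lambda')$ along this dichotomy, bounding the ``far'' factor by $h_A(\lceil n/2\rceil)$ or $h_B(\lceil n/2\rceil)$ and summing the ``near'' factor against the Schur norm, I obtain
\[
h_{AB}(n)\le h_A(\lceil n/2\rceil)\|B\|_{\mathcal S}+h_B(\lceil n/2\rceil)\|A\|_{\mathcal S}.
\]
Substituting this into $\|AB\|_{{\mathcal B}_{r,\alpha}}$, applying Minkowski in $\ell^r$, and reindexing $n\mapsto m=\lceil n/2\rceil$ (each $m\ge 1$ is hit by at most two values of $n$, and $(2m+1)^{\alpha r+d-1}\le 2^{\alpha r+d-1}(m+1)^{\alpha r+d-1}$) generates the factor $2^{\alpha+d/r}$ and the first line of \eqref{beurling.prop.eq3}. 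The second line then chains part (i) (to replace $\|\cdot\|_{\mathcal S}$ by $dD_1({\mathcal G})\|\cdot\|_{{\mathcal B}_{1,0}}$) with the third inequality of part (ii), applied with parameters $(r,\gamma)=(1,0)$ and $(r'',\beta)=(r,\alpha)$: the admissibility condition collapses to $\alpha>d(1-1/r)$, which is exactly where this hypothesis is needed, and produces the remaining factor $\big((\alpha-(d-1)(1-1/r))/(\alpha-d(1-1/r))\big)^{1-1/r}$. Completeness of ${\mathcal B}_{r,\alpha}({\mathcal G})$ in the norm \eqref{bralpha.norm} is routine (entrywise Cauchy limits plus Fatou on the weighted tail sums), so the Banach algebra claim follows. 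The only real obstacle across all four parts is constant chasing; once the H\"older factorization and the midpoint dichotomy are set up, the rest is bookkeeping.
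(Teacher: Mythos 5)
Your argument is correct and follows essentially the same route as the paper's proof: the Schur bound plus Proposition~\ref{graph.pr} for (i), the weighted $\ell^r\hookrightarrow\ell^{r''}$ embedding and a H\"older factorization (whose constant you reproduce exactly) for (ii), the half-distance splitting $h_{AB}(n)\le h_A(\lceil n/2\rceil)\|B\|_{\mathcal S}+h_B(\lceil n/2\rceil)\|A\|_{\mathcal S}$ for (iii), and the definition for (iv). You even supply details the paper elides (the general $r''<\infty$ H\"older case and completeness), and your implicit use of $d\ge 1$ matches the paper's own implicit assumption in its proof of (i).
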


 A matrix $A=(a(\lambda, \lambda'))_{\lambda, \lambda'\in V}\in {\mathcal B}_{1, 0}({\mathcal G})$ (and hence in $B_{r, \alpha}({\mathcal G})$  with $1\le r\le \infty$ and $\alpha>d(1-1/r)$ by Proposition \ref{beurling.prop}) can be well approximated by  matrices with finite bandwidth,
\begin{equation}\label{AN.def}
A_N=\big(a(\lambda, \lambda') \chi_{[0,1]}(\rho(\lambda,\lambda')/N) \big)_{\lambda, \lambda' \in G}, \ N\ge 0,\end{equation}
in the Schur norm.
In particular, it follows from \eqref{jaffardpr.pf.eq2} and \eqref{beurling.prop.eq1}
that
\begin{equation}\label{bandapproximation}
\|A-A_N\|_{\mathcal S}\le   D_1({\mathcal G})
\Big( (N+2)^d h_A(N+1)+ d \sum_{n=N+2}^\infty h_A(n) (n+1)^{d-1}\Big),
\end{equation}
where $h_A(n)=\sup_{\rho(\lambda, \lambda)\ge n}|a(\lambda, \lambda')|, n\ge 0$.

\smallskip

By \eqref{beurling.prop.eq1} and \eqref{beurling.prop.eq2} in Proposition \ref{beurling.prop},
${\mathcal B}_{r,\alpha}$ with $1\le r\le \infty$ and  $\alpha> d(1-1/r)$ are Banach algebras, and they are  subalgebras of ${\mathcal B}(\ell^p), 1\le p\le \infty$,
 \begin{equation} \label{beurling.prop.eq1+}
 \|A\|_{{\mathcal B}(\ell^p)}
\le  \frac{\alpha- (d-1)(1-1/r)}{\alpha-d(1-1/r)} d D_1({\mathcal G}) \|A\|_{{\mathcal B}_{r,\alpha}} \ {\rm for \ all} \ A\in {\mathcal B}_{r, \alpha} ({\mathcal G}).
 \end{equation}
 Moreover,  following the argument in \cite{sunca11, suncasp05} and applying \eqref{jaffardpr.pf.eq2},
 we obtain that ${\mathcal B}_{r,\alpha}$ are differential subalgebras of ${\mathcal B}(\ell^2)$.

 \begin{prop}\label{subdiff.prop}  Let  ${\mathcal G}$ be a connected simple graph with Beurling dimension $d$,
 $1\le r\le \infty$, $\alpha>d(1-1/r)$, and set
 $$\theta_{r, \alpha}=\frac{2(\alpha-d+d/r)}{1+2\alpha-2d+2d/r}. $$ Then there exists an absolute constant $C$ such that
 \begin{equation}\label{subdiff.prop.eq1}
\|AB\|_{{\mathcal B}_{r,\alpha}}\le C  \|A\|_{{\mathcal B}_{r,\alpha}} \|B\|_{{\mathcal B}_{r,\alpha}} \Big (\Big(\frac{\|A\|_{{\mathcal B}(\ell^2)}}{\|A\|_{{\mathcal B}_{r,\alpha}}}\Big)^{\theta_{r, \alpha}} +
\Big(\frac{\|B\|_{{\mathcal B}(\ell^2)}}{\|B\|_{{\mathcal B}_{r,\alpha}}}\Big)^{\theta_{r, \alpha}}
\Big)
\end{equation}
hold for all $ A, B \in {\mathcal B}_{r,\alpha}({\mathcal G})$.
  \end{prop}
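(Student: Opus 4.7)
The plan is to derive \eqref{subdiff.prop.eq1} from two ingredients. First, the sub-multiplicative estimate
$$\|AB\|_{\mathcal{B}_{r,\alpha}} \le C\bigl(\|B\|_{\mathcal{S}}\|A\|_{\mathcal{B}_{r,\alpha}} + \|A\|_{\mathcal{S}}\|B\|_{\mathcal{B}_{r,\alpha}}\bigr)$$
furnished by Proposition~\ref{beurling.prop}(iii). Second, an interpolation inequality of the form
\begin{equation*}
\|A\|_{\mathcal{S}} \le C\,\|A\|_{\mathcal{B}_{r,\alpha}}^{1-\theta_{r,\alpha}}\,\|A\|_{\mathcal{B}(\ell^2)}^{\theta_{r,\alpha}}, \qquad A\in \mathcal{B}_{r,\alpha}({\mathcal G}),
\end{equation*}
which I will refer to as $(\ast)$. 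Granting $(\ast)$, I would substitute it into the sub-multiplicative bound once for $\|A\|_{\mathcal{S}}$ and once for $\|B\|_{\mathcal{S}}$, then divide through by $\|A\|_{\mathcal{B}_{r,\alpha}}\|B\|_{\mathcal{B}_{r,\alpha}}$; the result rearranges directly into \eqref{subdiff.prop.eq1}.

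To establish $(\ast)$, I would truncate $A$ at bandwidth $N$ via \eqref{AN.def} and split $\|A\|_{\mathcal{S}} \le \|A_N\|_{\mathcal{S}} + \|A - A_N\|_{\mathcal{S}}$. The tail is controlled by \eqref{bandapproximation} (an application of Proposition~\ref{graph.pr}(ii)) together with the pointwise decay $h_A(k) \le C(k+1)^{-\alpha-d/r}\|A\|_{\mathcal{B}_{r,\alpha}}$---immediate from \eqref{bralpha.norm} and the monotonicity of $h_A$---giving
$$\|A - A_N\|_{\mathcal{S}} \le C(N+1)^{-(\alpha-d+d/r)}\|A\|_{\mathcal{B}_{r,\alpha}},$$
where the relevant tail summation converges thanks to $\alpha > d(1-1/r)$. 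For the truncated piece, each column of $A_N$ is supported on a ball containing at most $D_1({\mathcal G})(N+1)^d$ vertices by the polynomial growth \eqref{polynomialgrowth}, while its $\ell^2$-norm is dominated by $\|A\|_{\mathcal{B}(\ell^2)}$; column-by-column Cauchy--Schwarz then yields $\|A_N\|_{\mathcal{S}} \le C(N+1)^{d/2}\|A\|_{\mathcal{B}(\ell^2)}$. Choosing $N$ to balance the two estimates produces an interpolation of the shape $(\ast)$.

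The main obstacle is matching the sharp exponent $\theta_{r,\alpha} = 2(\alpha-d+d/r)/(1+2\alpha-2d+2d/r)$ stated in the proposition: the naive balancing of $(N+1)^{d/2}\|A\|_{\mathcal{B}(\ell^2)}$ against $(N+1)^{-(\alpha-d+d/r)}\|A\|_{\mathcal{B}_{r,\alpha}}$ yields the related exponent $2(\alpha-d+d/r)/(d+2(\alpha-d+d/r))$, which has $d$ rather than $1$ in the denominator and already identifies $\mathcal{B}_{r,\alpha}({\mathcal G})$ as a differential subalgebra of $\mathcal{B}(\ell^2)$, albeit with a weaker value of $\theta$. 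Recovering the stated sharp denominator $1+2\alpha-2d+2d/r$ will require either a dyadic-annular refinement---decomposing $A = \sum_{k\ge 0} A^{(k)}$ with $A^{(k)}$ supported on $\{2^{k-1}\le \rho(\lambda,\lambda')<2^k\}$, estimating each piece separately, and reassembling via an $\ell^r$ sum against the weights $2^{k(\alpha r+d-1)}$ implicit in \eqref{bralpha.norm}---or the iterative bootstrap technique developed in \cite{sunca11, suncasp05}. Once the interpolation with the correct exponent is in hand, the conclusion follows from the single substitution step described above.
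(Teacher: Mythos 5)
Your argument, as executed, proves a genuinely weaker statement than the proposition. Writing $\beta=\alpha-d+d/r$, your two estimates $\|A-A_N\|_{\mathcal S}\le C(N+1)^{-\beta}\|A\|_{{\mathcal B}_{r,\alpha}}$ and $\|A_N\|_{\mathcal S}\le C(N+1)^{d/2}\|A\|_{{\mathcal B}(\ell^2)}$ are correct (the first is exactly \eqref{bandapproximation} combined with $h_A(n)\le C(n+1)^{-\alpha-d/r}\|A\|_{{\mathcal B}_{r,\alpha}}$, the second is column/row Cauchy--Schwarz with \eqref{polynomialgrowth}), and together with Proposition~\ref{beurling.prop}(iii) they yield the differential inequality with exponent $2\beta/(d+2\beta)$. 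But the proposition asserts $\theta_{r,\alpha}=2\beta/(1+2\beta)$, which is strictly larger whenever $d\ge 2$, and since $\|A\|_{{\mathcal B}(\ell^2)}/\|A\|_{{\mathcal B}_{r,\alpha}}$ is bounded by an absolute constant, the inequality with the larger exponent is the strictly stronger statement (they coincide only for $d=1$). You acknowledge this discrepancy yourself, but the two devices you offer to close it are not carried out, and neither is obviously viable: the dyadic-annular decomposition $A=\sum_k A^{(k)}$ does not interact well with the $\|A\|_{{\mathcal B}(\ell^2)}$ factor, because an annular truncation $A^{(k)}$ has no uniform operator-norm bound in terms of $\|A\|_{{\mathcal B}(\ell^2)}$, and a scale-dependent splitting of the sum $\sum_{\lambda''}a(\lambda,\lambda'')b(\lambda'',\lambda')$ still produces the ball-volume factor $(N+1)^{d/2}$ and hence the same denominator $d+2\beta$. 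So the core of the proposition --- the mechanism that replaces $d$ by $1$ in the denominator --- is missing from your proof.

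For comparison, the paper itself does not write out a proof either: it obtains Proposition~\ref{subdiff.prop} ``following the argument in \cite{sunca11, suncasp05} and applying \eqref{jaffardpr.pf.eq2}''. Your skeleton (the submultiplicative bound of Proposition~\ref{beurling.prop}(iii), band truncation \eqref{AN.def}, and the tail estimate \eqref{jaffardpr.pf.eq2}) is the same outline, so what you have written is a correct proof of a differential-subalgebra property of ${\mathcal B}_{r,\alpha}({\mathcal G})$ in ${\mathcal B}(\ell^2)$ --- enough, for instance, for the qualitative Corollary~\ref{wiener.cor} --- but it does not establish the inequality with the specific exponent $\theta_{r,\alpha}$ stated in the proposition, which is exactly the content deferred to the cited bootstrap argument.
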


Applying \eqref{subdiff.prop.eq1} repeatedly and using the argument in \cite[Proposition 2.4]{sunaicm14}, we can find an absolute constant $C$
such that
\begin{eqnarray}\label{power.preeq111}
\|A_1\ldots A_n\|_{{\mathcal B}_{r, \alpha}} & \hskip-.08in \le & \hskip-0.08in  \Big( \max_{1\le k\le n} \|A_k\|_{{\mathcal B}(\ell^2)}\Big)^n\nonumber\\
& \hskip-.08in  & \hskip-0.08in \ \times
\Bigg( C\frac{\max_{1\le k\le n} \|A_k\|_{{\mathcal B}_{r, \alpha}}}{\max_{1\le k\le n} \|A_k\|_{{\mathcal B}(\ell^2)}}\Bigg)^{\frac{\theta_{r, \alpha}}{1+\theta_{r, \alpha}} n^{\log_2 (1+\theta_{r, \alpha})}}\end{eqnarray}
for all $A_1, \ldots, A_n\in {\mathcal B}_{r, \alpha}({\mathcal G}), n\ge 1$.
This together with
Proposition \ref{beurling.prop} implies that Banach algebras ${\mathcal B}_{r,\alpha}$  admit  norm-controlled inversions
in ${\mathcal B}(\ell^2)$.

\begin{cor}\label{wiener.cor}  Let  ${\mathcal G}$ be a connected simple graph with Beurling dimension $d$, and let
$1\le r\le \infty$ and $\alpha>d(1-1/r)$.
Then matrices in the Banach algebra ${\mathcal B}_{r,\alpha}({\mathcal G})$ admit norm-controlled inversions
in ${\mathcal B}(\ell^2)$.
\end{cor}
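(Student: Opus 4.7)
The plan is to deduce norm-controlled inversion from the differential subalgebra property of Proposition \ref{subdiff.prop} together with the product estimate \eqref{power.preeq111}, by writing $A^{-1}$ as a Neumann series that converges not only in ${\mathcal B}(\ell^2)$ but also in ${\mathcal B}_{r,\alpha}({\mathcal G})$, with a bound that depends only on $\|A\|_{{\mathcal B}_{r,\alpha}}$ and $\|A^{-1}\|_{{\mathcal B}(\ell^2)}$.

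First I would reduce to a self-adjoint positive problem. Since the definition \eqref{beurling.def}--\eqref{bralpha.norm} is symmetric in the two indices, $A\mapsto A^*$ is an isometry of ${\mathcal B}_{r,\alpha}({\mathcal G})$, so by Proposition \ref{beurling.prop}(iii) the matrix $P:=A^*A$ lies in ${\mathcal B}_{r,\alpha}({\mathcal G})$ with $\|P\|_{{\mathcal B}_{r,\alpha}}\le C\|A\|_{{\mathcal B}_{r,\alpha}}^2$, and $P^{-1}\in{\mathcal B}(\ell^2)$ with $\|P^{-1}\|_{{\mathcal B}(\ell^2)}\le \|A^{-1}\|_{{\mathcal B}(\ell^2)}^2$. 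If $P^{-1}\in{\mathcal B}_{r,\alpha}({\mathcal G})$ is controlled in norm by $\|P\|_{{\mathcal B}_{r,\alpha}}$ and $\|P^{-1}\|_{{\mathcal B}(\ell^2)}$, then $A^{-1}=P^{-1}A^*$ is controlled in ${\mathcal B}_{r,\alpha}({\mathcal G})$ by the Banach algebra inequality \eqref{beurling.prop.eq3}. Thus I may assume $A=A^*\ge 0$.

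Next I would linearize via a scaled Neumann series. Set $\kappa:=\|A\|_{{\mathcal B}(\ell^2)}\|A^{-1}\|_{{\mathcal B}(\ell^2)}$ and choose $t=2/(\|A\|_{{\mathcal B}(\ell^2)}+\|A^{-1}\|_{{\mathcal B}(\ell^2)}^{-1})$, so that the self-adjoint matrix $B:=I-tA$ satisfies
\begin{equation*}
q:=\|B\|_{{\mathcal B}(\ell^2)}\le \frac{\kappa-1}{\kappa+1}<1,
\end{equation*}
and $A^{-1}=t\sum_{n=0}^{\infty}B^{n}$. In ${\mathcal B}_{r,\alpha}({\mathcal G})$ the triangle inequality and solidity (Proposition \ref{beurling.prop}(iv)) give $\|B\|_{{\mathcal B}_{r,\alpha}}\le \|I\|_{{\mathcal B}_{r,\alpha}}+t\|A\|_{{\mathcal B}_{r,\alpha}}=:M_0$, a quantity already controlled by $\|A\|_{{\mathcal B}_{r,\alpha}}$ and $\|A^{-1}\|_{{\mathcal B}(\ell^2)}$.

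The key step is to insert $B$ into the product estimate \eqref{power.preeq111} with $A_1=\cdots=A_n=B$, yielding
\begin{equation*}
\|B^{n}\|_{{\mathcal B}_{r,\alpha}}\le q^{n}\Bigl(CM_0/q\Bigr)^{\frac{\theta_{r,\alpha}}{1+\theta_{r,\alpha}}\, n^{\log_2(1+\theta_{r,\alpha})}},\qquad n\ge 1.
\end{equation*}
Since $0<\theta_{r,\alpha}<1$, the exponent $\log_2(1+\theta_{r,\alpha})$ is strictly less than $1$, so $n^{\log_2(1+\theta_{r,\alpha})}=o(n)$; choosing any $q<q'<1$, there is an index $n_0=n_0(q,M_0,\theta_{r,\alpha})$ beyond which $\|B^n\|_{{\mathcal B}_{r,\alpha}}\le (q')^n$. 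Thus the Neumann series $t\sum_{n\ge 0}B^n$ converges absolutely in ${\mathcal B}_{r,\alpha}({\mathcal G})$ to $A^{-1}$, and its total mass is bounded by a continuous function of $q$, $M_0$, and $\theta_{r,\alpha}$, i.e., ultimately by a continuous function of $\|A\|_{{\mathcal B}_{r,\alpha}}$ and $\|A^{-1}\|_{{\mathcal B}(\ell^2)}$.

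The only substantive obstacle is the bookkeeping that turns the sub-exponential factor $n^{\log_2(1+\theta_{r,\alpha})}$ into an explicit continuous majorant $h$ as in \eqref{normcontrol.def}; this is purely calculus once the geometric/sub-exponential split at $n_0$ is made. The sharper polynomial version of $h$, which requires genuinely new ideas beyond this Neumann argument, is postponed to Theorem \ref{inverse.thm}.
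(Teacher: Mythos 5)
Your proposal is correct and follows essentially the same route as the paper: reduce to $A^*A$, form the Neumann series for $B=I-tA^*A$ (the paper simply takes $t=\|A\|_{{\mathcal B}(\ell^2)}^{-2}$, giving $\|B\|_{{\mathcal B}(\ell^2)}\le 1-\kappa(A)^{-2}$), and bound $\|B^n\|_{{\mathcal B}_{r,\alpha}}$ by the subexponential product estimate \eqref{power.preeq111} obtained from the differential-subalgebra property, so that $\sum_n\|B^n\|_{{\mathcal B}_{r,\alpha}}<\infty$ with a bound depending only on $\|A\|_{{\mathcal B}_{r,\alpha}}$ and $\|A^{-1}\|_{{\mathcal B}(\ell^2)}$. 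The only differences are cosmetic (your choice of the scaling $t$ and the explicit geometric-domination bookkeeping past $n_0$), so no further comment is needed.
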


\begin{proof}  Take $A\in  {\mathcal B}_{r,\alpha}({\mathcal G})$ with $A^{-1}\in {\mathcal B}(\ell^2)$. Set $B= I- A^*A /\|A\|_{{\mathcal B}(\ell^2)}^2$. One may verify that
\begin{equation}\label{wiener.cor.pf.eq1}
\|B\|_{{\mathcal B}(\ell^2)}\le 1-  (\kappa(A))^{-2}<1 \ {\rm and} \ \|B\|_{{\mathcal B}_{r, \alpha}}\le  1+ \|A\|_{{\mathcal B}(\ell^2)}^{-2} \|A^*A\|_{{\mathcal B}_{r, \alpha}},  \end{equation}
where $\kappa (A)=
\|A\|_{{\mathcal B}(\ell^2)}\|A^{-1}\|_{{\mathcal B}(\ell^2)}$.
Therefore  by  \eqref{beurling.prop.eq3}, \eqref{power.preeq111} and \eqref{wiener.cor.pf.eq1}, we obtain
\begin{eqnarray*}
\|A^{-1} \|_{{\mathcal B}_{r, \alpha}}  & \hskip-0.08in = & \hskip-0.08in \| (A^*A)^{-1} A^*\|_{{\mathcal B}_{r, \alpha}}\le C
\|A^*\|_{{\mathcal B}_{r, \alpha}} \|A\|_{{\mathcal B}(\ell^2)}^{-2} \sum_{n=0}^\infty \|B^n\|_{{\mathcal B}_{r, \alpha}}\nonumber\\
& \hskip-0.08in \le &  \hskip-0.08in   C \|A\|_{{\mathcal B}_{r, \alpha}} \|A\|_{{\mathcal B}(\ell^2)}^{-2}
\sum_{n=0}^\infty
\big( 1-  (\kappa(A))^{-2}\big)^n\nonumber\\
& & \hskip-0.08in \times
\Bigg( C\frac{1+ \|A\|_{{\mathcal B}(\ell^2)}^{-2} \|A\|_{{\mathcal B}_{r, \alpha}}^2}{1-  (\kappa(A))^{-2}}\Bigg)^{\frac{\theta_{r, \alpha}}{1+\theta_{r, \alpha}} n^{\log_2 (1+\theta_{r, \alpha})}}<\infty,
\end{eqnarray*}
where $C$ is an absolute constant independent of the matrix $A$.
\end{proof}

\smallskip
We conclude this section with a proof of Proposition \ref{beurling.prop}.

\begin{proof} [Proof of Proposition \ref{beurling.prop}] (i). The first inequality in \eqref{beurling.prop.eq1} is well known.
Take
 $A:=\big(a(\lambda, \lambda') \big)_{\lambda, \lambda'\in V}\in {\mathcal B}_{1, 0}({\mathcal G})$.
Then it follows from Proposition \ref{graph.pr} that
\begin{eqnarray*}
& & \sum_{\lambda'\in V} |a(\lambda, \lambda')|
 \le    \sum_{\lambda'\in V} h_A(\rho(\lambda, \lambda'))\\
&\hskip-0.08in \le &\hskip-0.08in
D_1({\mathcal G})\big( h_A(0)+ d\sum_{n=1}^\infty h_A(n) (n+1)^{d-1}\big)
\le d D_1({\mathcal G}) \|A\|_{{\mathcal B}_{1, 0}},
\end{eqnarray*}
where
 $h_A(n)=\sup_{\rho(\lambda, \lambda')\ge n} |a(\lambda, \lambda')|, n\ge 0$.
 This proves the second estimate in \eqref{beurling.prop.eq1}.

 (ii).\  The conclusion is obvious for $r=r''$. Then it remains to prove \eqref{beurling.prop.eq2}
 for $r<r''$. The first inequality in \eqref{beurling.prop.eq2}  follows from the embedding property for weighted sequence spaces, and the second one
is obvious.
 Now we prove the third inequality in \eqref{beurling.prop.eq2}.
For any
$A:=\big(a(\lambda, \lambda') \big)_{\lambda, \lambda'\in V}\in {\mathcal B}_{r', \beta}({\mathcal G})$ with $r''=\infty$, we have
\begin{equation*}
\|A\|_{{\mathcal B}_{r, \gamma}}^r   \le     \|A\|_{{\mathcal B}_{\infty, \beta}}^r
\sum_{n=0}^\infty (n+1)^{(\gamma-\beta)r+d-1} \le  \frac{\beta-\gamma -d/r+1/r}{\beta-\gamma-d/r} \|A\|_{{\mathcal B}_{\infty, \beta}}^r.
\end{equation*}
This proves the third inequality in \eqref{beurling.prop.eq2} with $r''=\infty$.
 We can use similar argument to prove the third inequality in \eqref{beurling.prop.eq2} with $1\le r''<\infty$.

(iii).\  We follow the argument in \cite{sunca11} where the conclusion with ${\mathcal G}={\mathcal Z}^d$ is proved.
Clearly $\|\cdot\|_{{\mathcal B}_{r, \alpha}}$ is a norm. Then it suffices to prove \eqref{beurling.prop.eq3}. Take $A:=\big(a(\lambda, \lambda') \big)_{\lambda, \lambda'\in V}$ and $B:=\big(b(\lambda, \lambda') \big)_{\lambda, \lambda'\in V}\in {\mathcal B}_{r', \beta}({\mathcal G})$, and write $AB:=\big(c(\lambda, \lambda') \big)_{\lambda, \lambda'\in V}$.
Then  for all $\lambda, \lambda'\in V$ we have
\begin{eqnarray*}
 |c(\lambda, \lambda')|
 & \le &  \sum_{\lambda^{\prime\prime}\in V} |a(\lambda, \lambda^{\prime\prime})| |b(\lambda^{\prime\prime}, \lambda')|\nonumber\\
& \le &
h_A(\lfloor \rho(\lambda, \lambda')/2\rfloor) \sum_{\lambda^{\prime\prime}\in V}|b(\lambda^{\prime\prime}, \lambda')|\nonumber\\
& & \quad +
h_B(\lfloor \rho(\lambda, \lambda')/2\rfloor) \sum_{\lambda^{\prime\prime}\in V}|a(\lambda, \lambda^{\prime\prime})|\nonumber\\
& \le & \|B\|_{\mathcal S} h_A(\lfloor \rho(\lambda, \lambda')/2\rfloor)+ \|A\|_{\mathcal S} h_B(\lfloor \rho(\lambda, \lambda')/2\rfloor),
\end{eqnarray*}
where $h_A(n)=\sup_{\rho(\lambda, \lambda')\ge n} |a(\lambda, \lambda')|$ and $h_B(n)=\sup_{\rho(\lambda, \lambda')\ge n} |b(\lambda, \lambda')|, n\in \ZZ_+$.
Therefore
\begin{eqnarray}  \label{beurling.prop.pf.eq1}
\|AB\|_{{\mathcal B}_{\infty, \alpha}} &\hskip-0.08in \le & \hskip-0.08in
\|B\|_{\mathcal S} \sup_{n\ge 0} h_A(\lfloor n/2\rfloor) (n+1)^\alpha+
\|A\|_{\mathcal S} \sup_{n\ge0} h_B(\lfloor n/2\rfloor) (n+1)^\alpha\nonumber\\
& \hskip-0.08in \le & \hskip-0.08in 2^\alpha \big(\|B\|_{\mathcal S} \|A\|_{{\mathcal B}_{\infty, \alpha}} +
\|A\|_{\mathcal S} \|B\|_{{\mathcal B}_{\infty, \alpha}}\big)
\end{eqnarray}
for $r=\infty$, and
\begin{eqnarray}  \label{beurling.prop.pf.eq2}
\|AB\|_{{\mathcal B}_{r, \alpha}} & \hskip-0.08in \le & \hskip-0.08in \|B\|_{\mathcal S} \Big(\sum_{n=0}^\infty |h_A(\lfloor n/2\rfloor)|^r (n+1)^{\alpha r+d-1}\Big)^{1/r}\nonumber\\
& & \hskip-0.08in  +
\|A\|_{\mathcal S} \Big(\sum_{n=0}^\infty |h_B(\lfloor n/2\rfloor)|^r (n+1)^{\alpha r+d-1}\Big)^{1/r}\nonumber\\
& \hskip-0.08in  \le & \hskip-0.08in  2^{\alpha+d/r} \big(\|B\|_{\mathcal S} \|A\|_{{\mathcal B}_{r, \alpha}} +
\|A\|_{\mathcal S} \|B\|_{{\mathcal B}_{r, \alpha}}\big)
\end{eqnarray}
for $1\le r<\infty$. This proves the first inequality in \eqref{beurling.prop.eq3}.
The second inequality in  \eqref{beurling.prop.eq3} follows from \eqref{beurling.prop.eq1}, \eqref{beurling.prop.eq2} and the first estimate in
\eqref{beurling.prop.eq3}.

(iv). \ The solidness follows immediately from the definition \eqref{beurling.def} of the Beurling class ${\mathcal B}_{r, \alpha}({\mathcal G})$.
\end{proof}

\bigskip

\section{$\ell^p$-stability bound control}
\label{stability.section}

In this section, we prove the following result on lower $\ell^p$-stability bounds  of matrices in the Beurling class for different  exponent $1\le p\le \infty$.

\begin{thm}\label{stability.thm}
Let $1\le p, q, r\le \infty$, $r'=r/(r-1)$,  ${\mathcal G}$ be a connected simple graph  with Beurling dimension $d$,
the  counting measure $\mu$ on ${\mathcal G}$ have the doubling property (\ref{doubling}),
and let $A\in {\mathcal B}_{r,\alpha}({\mathcal G}) $ for some $ \alpha > d/r'$.
If  there exists a positive constant $A_p$ such that
 \begin{equation}\label{stability.thm.eq0}
 \|Ac\|_p\ge  A_p \|c\|_p \ \ {\rm for \ all}   \ c\in \ell^p,
 \end{equation}
 then there exists  a positive constant $A_q$ such that 
\begin{equation}\label{stability.thm.eq1}
 \|Ad\|_q \ge A_q\|d\|_q \ \ {\rm for \ all}\ d\in \ell^q.
\end{equation}
Moreover,
there exists an absolute constant $C$, independent of matrices $A\in {\mathcal B}_{r, \alpha}({\mathcal G})$ and  exponents $1\le p, q\le \infty$, such that the lower
$\ell^q$-stability bound $A_q$ in \eqref{stability.thm.eq1}  satisfies
\begin{equation}\label{stability.thm.eq2}
\frac{\|A\|_{{\mathcal B}_{r, \alpha}}}{A_q}
 \le  C  \left\{\begin{array}
{ll} \Big(\frac{\|A\|_{{\mathcal B}_{r, \alpha}}}{A_p}\Big)^{(1+\theta(p, q))^{K_0}}
 & {\rm if} \ \alpha\ne 1+d/r'\\
\Big(\frac{\|A\|_{{\mathcal B}_{r, \alpha}}}{A_p}
\ln \big(1+\frac{\|A\|_{{\mathcal B}_{r, \alpha}}}{A_p}\big)\Big)^{(1+\theta(p, q))^{K_0}}
  &  {\rm if} \ \alpha=1+d/r',
  \end{array}\right.
\end{equation}
where
 $$\theta(p, q)=\frac{d|1/p-1/q|}{K_0 \min(\alpha-d/r', 1)-d|1/p-1/q|}$$
 and  $K_0$ is a positive integer with
$$K_0> \frac{d}{\min(\alpha-d/r', 1)}.$$
\end{thm}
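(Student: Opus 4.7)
The plan is to prove Theorem \ref{stability.thm} by a localization-commutator argument in the spirit of Sj\"ostrand, combined with an iteration scheme. By symmetry in $p$ and $q$, and trivially in the case $p=q$, it suffices to consider $p\neq q$. The heart of the argument is a \emph{one-step lemma}: if $A$ is $\ell^p$-stable with bound $A_p$, then $A$ is $\ell^{\tilde p}$-stable for every $\tilde p$ with $|1/p-1/\tilde p|$ bounded by $\min(\alpha-d/r',1)/d$, with the new stability bound quantitatively controlled by a power of the ratio $\kappa:=\|A\|_{{\mathcal B}_{r,\alpha}}/A_p$. Iterating this lemma $K_0$ times along intermediate exponents $p=p_0,p_1,\ldots,p_{K_0}=q$ with $|1/p_j-1/p_{j+1}|=|1/p-1/q|/K_0$ yields the conclusion; the choice of $K_0$ in the hypothesis makes each step admissible for the one-step lemma, and the $K_0$-fold iteration is exactly what produces the exponent $(1+\theta(p,q))^{K_0}$ in \eqref{stability.thm.eq2}.

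For the one-step lemma I would fix a radius $R\ge 1$ to be chosen, take a maximal $R$-separated set $\{\lambda_i\}_{i\in I}\subset V$, and build a smooth partition $\{\psi_i\}$ subordinate to the balls $\{B(\lambda_i,2R)\}$ satisfying $\sum_i\psi_i^{\tilde p}\equiv 1$ (interpreted appropriately for $\tilde p=\infty$) with Lipschitz constant $\mathcal O(1/R)$. Let $P_i$ denote the diagonal operator with entries $\psi_i$. For $c\in\ell^{\tilde p}$, each $P_ic$ is supported in $B(\lambda_i,2R)$, which has cardinality at most $D_1({\mathcal G})(2R+1)^d$ by \eqref{polynomialgrowth}. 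Applying the $\ell^p$-stability hypothesis \eqref{stability.thm.eq0} to $P_ic$ and decomposing via $AP_i=P_iA+[A,P_i]$ gives
\begin{equation*}
A_p\,\|P_ic\|_p \le \|P_iAc\|_p+\|[A,P_i]c\|_p.
\end{equation*}
H\"older's inequality on the supports of $P_ic$ and $P_iAc$ converts between $\ell^p$ and $\ell^{\tilde p}$ norms at the cost of a factor $(2R+1)^{d|1/p-1/\tilde p|}$, and summing over $i$ (using the bounded overlap of the enlarged balls) produces
\begin{equation*}
A_p\,\|c\|_{\tilde p}\le C(R+1)^{d|1/p-1/\tilde p|}\|Ac\|_{\tilde p}+C\,E(R)\,\|c\|_{\tilde p},
\end{equation*}
where $E(R):=\sup_i\|[A,P_i]\|_{\mathcal S}$ is the uniform Schur norm of the commutator.

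The core technical ingredient is the commutator estimate $E(R)\le C\|A\|_{{\mathcal B}_{r,\alpha}}R^{-\min(\alpha-d/r',1)}$, with a logarithmic factor $\ln(R+1)$ in the critical case $\alpha=1+d/r'$. This follows because the entry $[A,P_i](\lambda,\lambda')=a(\lambda,\lambda')(\psi_i(\lambda')-\psi_i(\lambda))$ is dominated by $h_A(\rho(\lambda,\lambda'))\min(1,\rho(\lambda,\lambda')/R)$, and Proposition \ref{graph.pr}(i)–(ii) together with H\"older's inequality applied to the weighted $h_A$-sum (invoking $\alpha>d/r'$ to make the weight integrable against $(n+1)^{d-1}$) yields exactly the claimed rate; the log loss at $\alpha=1+d/r'$ comes from the marginal divergence of the relevant sum. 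Choosing $R$ so that $CE(R)=A_p/2$ absorbs the second term on the left and, after solving for $R$, returns
\begin{equation*}
A_p\|c\|_{\tilde p}\le C\kappa^{d|1/p-1/\tilde p|/\min(\alpha-d/r',1)}\|Ac\|_{\tilde p},
\end{equation*}
the one-step lemma with a log adjustment in the critical case.

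Finally, the iteration multiplies the ratio $\kappa$ by an exponent of $(1+\theta_{\text{step}})$ at each step, where $\theta_{\text{step}}=d|1/p-1/q|/(K_0\min(\alpha-d/r',1)-d|1/p-1/q|)=\theta(p,q)$; after $K_0$ compositions we reach $q$ and obtain the exponent $(1+\theta(p,q))^{K_0}$ in \eqref{stability.thm.eq2}. The main obstacle is the commutator estimate at the critical exponent $\alpha=1+d/r'$, where the careful use of Proposition \ref{graph.pr} is required to extract only a logarithmic loss and to keep all constants absolute (independent of $p,q$, of the matrix $A$, and of the size of ${\mathcal G}$); bookkeeping of these constants through the $K_0$ iterations is the other delicate point.
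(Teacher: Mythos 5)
Your overall architecture (localize to balls of radius $R$ built from a maximal separated set, apply the $\ell^p$-stability to the localized pieces, control the commutator with the cut-off via Proposition \ref{graph.pr} and H\"older, absorb by choosing $R$, then bootstrap through $K_0$ intermediate exponents) is exactly the paper's strategy, and your commutator rate $E(R)\le C\|A\|_{{\mathcal B}_{r,\alpha}}R^{-\min(\alpha-d/r',1)}$ (with the $(\ln R)^{1/r'}$ loss at $\alpha=1+d/r'$) is correct and matches \eqref{stability.lem.pf.eq9}--\eqref{stability.lem.pf.eq10}. However, there is a genuine gap in your key displayed inequality
\begin{equation*}
A_p\,\|c\|_{\tilde p}\le C(R+1)^{d|1/p-1/\tilde p|}\|Ac\|_{\tilde p}+C\,E(R)\,\|c\|_{\tilde p}:
\end{equation*}
the volume factor $(R+1)^{d|1/p-1/\tilde p|}$ cannot be kept off the commutator term. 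If $\tilde p<p$, the conversion $\|P_ic\|_{\tilde p}\le C R^{d(1/\tilde p-1/p)}\|P_ic\|_p$ must be performed \emph{before} you split $AP_i=P_iA+[A,P_i]$, so the factor multiplies both terms. If $\tilde p>p$, the factor reappears when you convert $\|[A,P_i]c\|_p$ into a bound against $\|c\|_{\tilde p}$: on ${\mathcal Z}$ ($d=1$) take $A$ with entries $R^{-\alpha}$ on the band $\rho(\lambda,\lambda')= R$ and $c=\chi_{B(\lambda_i,2R)}$; then $[A,P_i]c$ has $\sim R$ entries of size $\sim E(R)$, so $\|[A,P_i]c\|_p\sim E(R)R^{1/p}$ while $E(R)\|c\|_{\tilde p}\sim E(R)R^{1/\tilde p}$, and the claimed bound fails by exactly $R^{d|1/p-1/\tilde p|}$. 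This is why the paper's \eqref{stability.lem.pf.eq8} carries $N^{d|1/p-1/q|}$ on \emph{both} the $\|Ac\|_q$ term and the commutator/tail terms.

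The error is not cosmetic: with the factor restored, the absorption step forces $d|1/p-1/\tilde p|<\min(\alpha-d/r',1)$ (this is where your one-step restriction actually comes from; in your argument as written it is never used), and solving for $R$ gives the per-step loss $\kappa^{\theta_1}$ with $\theta_1=d|1/p-1/\tilde p|/\big(\min(\alpha-d/r',1)-d|1/p-1/\tilde p|\big)$, not your claimed $\kappa^{d|1/p-1/\tilde p|/\min(\alpha-d/r',1)}$. Your final paragraph silently uses the former (you identify the step exponent with $\theta(p,q)$, which has the subtracted denominator), so your iteration is internally inconsistent with your one-step lemma; moreover, if your one-step bound were true as stated, no restriction on $|1/p-1/\tilde p|$ and hence no iteration would be needed at all, which already signals the problem. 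Once the factor is put back, your one-step statement becomes precisely Lemma \ref{stability.lem} (the paper commutes with the banded truncation $A_N$ and treats the tail $A-A_N$ separately, which is equivalent to your single kernel bound $h_A(\rho)\min(1,\rho/R)$), and the bootstrap then goes through as in the paper's proof of Theorem \ref{stability.thm}, including the extra bookkeeping of logarithms in the critical case $\alpha=1+d/r'$.
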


To prove Theorem \ref{stability.thm}, we introduce  a
truncation operator $\chi_\lambda^N$ and its smooth version
 $\Psi_{\lambda}^{N}$ by
\begin{equation}\label{chi.def}
 \chi_\lambda^N:\ \big(c(\lambda')\big)_{\lambda'\in V}\longmapsto \big ( \chi_{[0,1]}(\rho(\lambda,\lambda')/N)  c(\lambda') \big)_{\lambda'\in V}
\end{equation}
and
\begin{equation}\label{Psi.def}
 \Psi_\lambda^N :\  \big(c(\lambda')\big)_{\lambda'\in V}\longmapsto \big( \psi_0(\rho(\lambda,\lambda')/N)  c(\lambda') \big)_{\lambda'\in V},
\end{equation}
 where $\psi_0$ is    the trapezoid function  given by
\begin{equation*}
\psi_0(t)=\left\{\begin{array}{ll}
1 & {\rm if} \ |t|\le 1/2\\
2-2|t| & {\rm if} \ 1/2<|t|\le 1\\
0 & {\rm if} \ |t|> 1.
\end{array}
\right.
\end{equation*}
The truncation operator $\chi_\lambda^N$ and its smooth version $\Psi_\lambda^N$  localize a vector to the $N$-neighborhood of
the vertex $\lambda$, and it can also be considered as  diagonal matrices  with diagonal entries $\chi_{[0,1]}(\rho(\lambda,\lambda')/N)=\chi_{B(\lambda,N)}(\lambda')$ and $\psi_0(\rho(\lambda,\lambda')/N), \lambda'\in V$, respectively.
Our proof of Theorem \ref{stability.thm}
 depends on the estimate \eqref{stability.lem.pf.eq9-} for the commutator between
a matrix in the Beurling algebra and  the truncation operator $\Psi_\lambda^N$. Similar estimate has been used by Sj\"ostrand  in \cite{sjostrand} to establish inverse-closedness of the Baskakov-Gohberg-Sj\"ostrand subalgebra in ${\mathcal B}(\ell^2)$.

To prove Theorem \ref{stability.thm}, we  recall {\em maximal $N$-disjoint subsets}
 $V_N\subset V, N\ge 1$, which means that
\begin{equation}\label{max1}
B(\lambda, N)\cap\big(\cup_{\lambda_m \in V_N} B(\lambda_m,N) \big) \ne \emptyset
\ \  \text{ for all } \lambda \in V
\end{equation}
and
\begin{equation}\label{max2}
B(\lambda_m,N) \cap B(\lambda_n, N) =\emptyset \ \ \text{ for all distinct vertices } \lambda_m, \lambda_n \in V_N.
\end{equation}
We call vertices in a maximal $N$-disjoint set as fusion vertices \cite{chengsun}.
For a maximal $N$-disjoint set $V_N$,  the $N$-neighborhoods  $B(\lambda_m, N), \lambda_m\in V_N$, centered at fusion vertices have no common  vertices by \eqref{max2}. It is shown in \cite{chengsun} that
the $(2N)$-neighborhood  $B(\lambda_m, 2N), \lambda_m\in V_N$, is a covering of the set $V$.

\begin{prop}\label{prop-counting}
Let ${\mathcal G}:=(V, E)$ be a connected simple graph and  $\mu$ have the doubling property (\ref{doubling}).
If $V_N$  is a maximal $N$-disjoint subset of $V$, 
 then
\begin{eqnarray}\label{counting}
1 & \hskip-0.08in \le &  \hskip-0.08in  \inf_{\lambda \in V} \sum_{\lambda_m \in V_N}\chi_{B(\lambda_m, N')}(\lambda)
\nonumber\\
&\hskip-0.08in  \le &  \hskip-0.08in
\sup_{\lambda \in V} \sum_{\lambda_m \in V_N}\chi_{B(\lambda_m, N')}(\lambda)
\le (D_0({\mathcal G}))^{\lceil \log_2(2N'/N+1)\rceil} 
\end{eqnarray}
for all $N'\ge 2N.$
\end{prop}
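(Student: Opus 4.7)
The plan is to treat the two inequalities separately. The lower bound is a direct consequence of the maximality condition \eqref{max1}, while the upper bound is a standard packing argument powered by the doubling property \eqref{doubling}.

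For the lower bound, I would fix $\lambda\in V$ and invoke \eqref{max1} to produce some $\lambda_m\in V_N$ and some $\lambda''\in B(\lambda,N)\cap B(\lambda_m,N)$. The triangle inequality for the geodesic distance then yields $\rho(\lambda,\lambda_m)\le 2N\le N'$, so $\lambda\in B(\lambda_m,N')$ and hence $\sum_{\lambda_m\in V_N}\chi_{B(\lambda_m,N')}(\lambda)\ge 1$.

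For the upper bound I would work with the index set $V_N(\lambda):=\{\lambda_m\in V_N:\ \lambda_m\in B(\lambda,N')\}$, because the sum in \eqref{counting} equals $\#V_N(\lambda)$. The key geometric input comes from \eqref{max2}: the balls $\{B(\lambda_m,N)\}_{\lambda_m\in V_N}$ are pairwise disjoint. For each $\lambda_m\in V_N(\lambda)$, a triangle-inequality argument shows both $B(\lambda_m,N)\subset B(\lambda,N+N')$ and $B(\lambda,N+N')\subset B(\lambda_m,N+2N')$. Applying the doubling property \eqref{doubling} iteratively $k:=\lceil\log_2(1+2N'/N)\rceil$ times gives
\[
\mu\bigl(B(\lambda,N+N')\bigr)\le \mu\bigl(B(\lambda_m,N+2N')\bigr)\le \bigl(D_0(\mathcal{G})\bigr)^{k}\mu\bigl(B(\lambda_m,N)\bigr).
\]
Summing this lower bound for $\mu(B(\lambda_m,N))$ over $\lambda_m\in V_N(\lambda)$ and combining with the disjointness inclusion
\[
\sum_{\lambda_m\in V_N(\lambda)}\mu\bigl(B(\lambda_m,N)\bigr)\le \mu\bigl(B(\lambda,N+N')\bigr),
\]
the factor $\mu(B(\lambda,N+N'))$ cancels (it is positive since $\lambda$ itself is a vertex of positive counting mass) and I obtain $\#V_N(\lambda)\le (D_0(\mathcal{G}))^{k}$, which is exactly the right-hand side of \eqref{counting}.

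I do not anticipate a genuine obstacle; the only subtle point is verifying $k=\lceil\log_2(1+2N'/N)\rceil$ is the correct exponent, i.e.\ that $2^{k}N\ge N+2N'$, which justifies $k$ applications of doubling starting from radius $N$. Apart from that, the proof is a direct combination of maximality, disjointness, and iteration of the doubling inequality, so the write-up should be short.
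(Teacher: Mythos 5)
Your argument is correct: the maximality condition \eqref{max1} together with the triangle inequality (using $N'\ge 2N$) gives the lower bound, and the packing argument combining the disjointness \eqref{max2} with $\lceil\log_2(2N'/N+1)\rceil$ iterations of the doubling inequality \eqref{doubling} yields exactly the stated bound $(D_0({\mathcal G}))^{\lceil \log_2(2N'/N+1)\rceil}$, with the exponent check $2^kN\ge N+2N'$ done correctly. The paper states this proposition without a written proof (deferring to the fusion-vertex covering results it cites), and your covering--packing argument is precisely the standard one it relies on, so nothing is missing.
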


To prove Theorem \ref{stability.thm}, we first establish its weak version,  the equivalence between $\ell^p$ and $\ell^q$-stabilities of a matrix with small $|1/p-1/q|$.

\begin{lem}\label{stability.lem}
Let $p, r, r',  d, \alpha,  {\mathcal G}, A$ be as in Theorem \ref{stability.thm}.
If $1\le q\le \infty$ satisfies
\begin{equation}\label{stability.lem.eq1}
d|1/p-1/q|< \min (\alpha-{d}/{r'}, 1),
\end{equation}
then
$A$ has $\ell^q$-stability. Furthermore there exists an absolute constant $C$, independent of matrices $A\in {\mathcal B}_{r, \alpha}({\mathcal G})$ and exponents $1\le p, q\le \infty$, such that the optimal
lower $\ell^q$-stability bound $A_q$ of the matrix $A$ satisfies
\begin{equation}\label{stability.lem.eq2}
A_q \ge  C  A_p 
\times
 \left\{ \begin{array}{ll} \Big(\frac{\|A\|_{{\mathcal B}_{r, \alpha}}}{A_p}\Big)^{-\theta_1(p,q)} & {\rm if} \ \alpha \ne d/r'+1\\
 \Big(\frac{\|A\|_{{\mathcal B}_{r, \alpha}}}{A_p}
\ln \Big(1+\frac{\|A\|_{{\mathcal B}_{r, \alpha}}}{A_p}\Big)
\Big)^{-\theta_1(p,q)} & {\rm if} \ \alpha = d/r'+1,\end{array}\right.
\end{equation}
where
$$\theta_1(p, q)=\frac{d|1/p-1/q|}{ \min(\alpha-d/r', 1)-d|1/p-1/q|}.$$
\end{lem}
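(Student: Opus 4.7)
The strategy is a scale-$N$ localization argument in the spirit of Sj\"ostrand's invertibility proof, with the scale $N$ optimized at the end to balance two competing error terms. I describe the case $q\le p$ in detail; the case $q>p$ is handled analogously, with the roles of the nesting and H\"older support inequalities interchanged and an additional spatial splitting of the commutator tail required.

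Fix $N\ge 1$, to be chosen at the end. Let $V_N=\{\lambda_m\}$ be a maximal $N$-disjoint subset of $V$; by Proposition~\ref{prop-counting} the family $\{B(\lambda_m,4N)\}_{\lambda_m\in V_N}$ covers $V$ with overlap bounded by a constant $K$ depending only on the doubling constant of $\mu$. Let $\Psi_m:=\Psi_{\lambda_m}^{4N}$ be the smooth cutoffs of \eqref{Psi.def}. For $c\in\ell^p\cap\ell^q$, set $c_m:=\Psi_m c$; these are supported in $B(\lambda_m,4N)$, which has cardinality at most $CN^d$.

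The main technical step, and the principal obstacle, is the Schur-norm commutator estimate
\[
\|[A,\Psi_m]\|_{{\mathcal S}}\le \epsilon_N := C\,\|A\|_{{\mathcal B}_{r,\alpha}}\,N^{-\min(\alpha-d/r',1)}\,L(N),
\]
uniformly in $\lambda_m\in V_N$, with $L(N)=\log(N+2)$ exactly when $\alpha=1+d/r'$ and $L(N)\equiv 1$ otherwise. Since $\psi_0$ is Lipschitz with constant $2$, the commutator entries satisfy $|[A,\Psi_m](\lambda,\lambda')|\le |a(\lambda,\lambda')|\min(1,\rho(\lambda,\lambda')/(2N))$, and the Schur row/column sums reduce to $\sum_{\lambda'}|a(\lambda,\lambda')|\min(1,\rho(\lambda,\lambda')/(2N))$. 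Splitting this sum at $\rho=2N$ and applying Proposition~\ref{graph.pr} in each regime, together with H\"older's inequality using conjugate exponents $r,r'$ to extract $\|A\|_{{\mathcal B}_{r,\alpha}}$, produces the claimed $\epsilon_N$; the case $\alpha=1+d/r'$ is precisely where the H\"older dual exponent becomes borderline integrable, producing the logarithm. Viewed globally as the operator $c\mapsto([A,\Psi_m]c)_{\lambda_m\in V_N}$ from $\ell^q(V)$ into $\ell^q(V_N\times V)$, the analogous Schur estimate combined with the bounded overlap of $V_N$ yields $\bigl(\sum_m\|[A,\Psi_m]c\|_q^q\bigr)^{1/q}\le C\epsilon_N\|c\|_q$.

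With the commutator estimate in hand, I apply $\ell^p$-stability to each $c_m$ and decompose $Ac_m=\Psi_m(Ac)+[A,\Psi_m]c$ to get $A_p\|c_m\|_p\le\|\Psi_m(Ac)\|_p+\|[A,\Psi_m]c\|_p$. For $q\le p$, the support-H\"older inequality gives $\|c_m\|_q\le CN^{d(1/q-1/p)}\|c_m\|_p$, while the nesting $\ell^p\subset\ell^q$ gives $\|\Psi_m(Ac)\|_p\le\|\Psi_m(Ac)\|_q$ and $\|[A,\Psi_m]c\|_p\le\|[A,\Psi_m]c\|_q$. Combining these, raising to the $q$-th power, summing over $m$, and invoking the overlap estimates $\|c\|_q^q\le\sum_m\|c_m\|_q^q$ and $\sum_m\|\Psi_m(Ac)\|_q^q\le K\|Ac\|_q^q$, together with the Schur-type commutator bound, yields
\[
\|c\|_q^q \le C A_p^{-q}N^{dq(1/q-1/p)}\bigl(\|Ac\|_q^q+\epsilon_N^q\|c\|_q^q\bigr).
\]
Now choose $N$ so that $CA_p^{-1}N^{d(1/q-1/p)}\epsilon_N\le \tfrac{1}{2}$; this is feasible precisely because the hypothesis $d|1/p-1/q|<\min(\alpha-d/r',1)$ makes the net exponent of $N$ strictly positive. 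Absorbing the $\|c\|_q^q$ term into the left-hand side and substituting the optimal $N\sim(\|A\|_{{\mathcal B}_{r,\alpha}}/A_p)^{1/(\min(\alpha-d/r',1)-d|1/p-1/q|)}$ gives $\|c\|_q\le 2CA_p^{-1}N^{d(1/q-1/p)}\|Ac\|_q$, reproducing exactly the exponent $\theta_1(p,q)$ of \eqref{stability.lem.eq2}; the logarithmic factor at $\alpha=1+d/r'$ enters through $L(N)$, producing the second branch. Density of $\ell^p\cap\ell^q$ in $\ell^q$ then extends the estimate to all $c\in\ell^q$.
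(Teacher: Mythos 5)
Your proposal is correct and follows the same skeleton as the paper's argument: localization at scale $N$ via a maximal $N$-disjoint set and the smooth cutoffs $\Psi_{\lambda_m}^{4N}$ of \eqref{Psi.def}, a Schur-norm commutator bound of size $N^{-\min(\alpha-d/r',1)}$ (with the logarithm exactly at $\alpha=1+d/r'$), transfer between $\ell^p$ and $\ell^q$ norms on $N$-balls at the cost of $N^{d|1/p-1/q|}$, and optimization of $N$ against $\|A\|_{{\mathcal B}_{r,\alpha}}/A_p$, which reproduces $\theta_1(p,q)$. Where you differ is in the bookkeeping of the error: the paper first replaces $A$ by its banded truncation $A_N$ of \eqref{AN.def} and splits the error into $(A-A_N)\Psi_{\lambda_m}^{4N}$, $[A_N,\Psi_{\lambda_m}^{4N}]$ and $\Psi_{\lambda_m}^{4N}(A_N-A)$, each of which is genuinely local (the banded commutator has both indices in $B(\lambda_m,5N)$), so a uniform per-$m$ Schur bound together with the overlap estimate of Proposition~\ref{prop-counting} suffices in the same way for $q\le p$ and $q>p$. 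You instead keep the full commutator $[A,\Psi_{\lambda_m}^{4N}]$, whose kernel is \emph{not} supported near $\lambda_m$; this is fine, but it forces the extra global step you invoke — the stacked Schur test on $\ell^q(V_N\times V)$, whose column sums are controlled only because, for a fixed entry $(\lambda,\lambda')$, at most boundedly many $m$ (by Proposition~\ref{prop-counting}) give a nonzero kernel value — and it is also why your $q>p$ case needs the ``additional spatial splitting of the commutator tail,'' which in effect re-introduces the paper's band truncation for that piece. In short: your route trades the paper's three local error terms for one global commutator plus a product-index Schur test; it is marginally cleaner for $q\le p$, while the paper's decomposition handles both orderings of $p,q$ uniformly. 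The remaining steps (choice of $N$, the borderline logarithmic case, density of $\ell^p\cap\ell^q$ in $\ell^q$) match the paper and give the stated constants up to absolute factors.
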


\begin{proof}
Let $N\ge 2$ be a positive integer chosen later,  $V_N$ be a maximal $N$-disjoint set   of fusion vertices  satisfying
\eqref{max1} and \eqref{max2},
and let $\Psi_\lambda^N, \lambda\in V$, be the localization operators
 in \eqref{Psi.def}.
Take $c=\big(c(\lambda)\big)_{\lambda\in V}\in \ell^q$.
Applying the covering property \eqref{counting} of $\{B(\lambda_m, 2N), \lambda_m\in V_N\}$, we have
$$ 
\|c\|_{q} \le \big\|\big(
\| \Psi_{\lambda_m}^{4N}c\|_{q}\big)_{\lambda_m\in V_N}\big\|_q.
$$
Combining it  with  the polynomial growth property \eqref{polynomialgrowth} and
the norm equivalence between
$\|\Psi_{\lambda_m}^{4N} c\|_p$ and $\|\Psi_{\lambda_m}^{4N} c\|_q$, we obtain
  \begin{eqnarray}\label{stability.lem.pf.eq1} 
\|c\|_{q}  & \hskip-0.08in \le &  \hskip-0.08in
\big\|\big( (\mu(B(\lambda_m, 4N))   )^{(1/q-1/p)_+}\|\Psi_{\lambda_m}^{4N} c\|_p
\big)_{\lambda_m\in V_N}\big\|_q\nonumber\\
& \hskip-0.08in \le &  \hskip-0.08in  C N^{d(1/q-1/p)_+}
\big\|\big(\|\Psi_{\lambda_m}^{4N} c\|_p\big)_{\lambda_m\in V_N}\big\|_q.
\end{eqnarray}
Here in the proof, the capital letter $C$ denotes an absolute constant independent of matrices $A$,  sequences $c$, integers $N$, and exponents $p$ and $q$, which is
not necessarily the same at each occurrence.

For $\lambda\in V$,  it follows from the $\ell^p$-stability  \eqref{stability.thm.eq0} for the matrix $A$ that
\begin{equation}\label{stability.lem.pf.eq2}
A_p \|\Psi_{\lambda}^{4N} c\|_p\le \|A\Psi_{\lambda}^{4N} c\|_p.
\end{equation}
Let  $A_N, N\ge 2$, be matrices with finite bandwidth  in \eqref{AN.def}.
Combining \eqref{stability.lem.pf.eq1} and \eqref{stability.lem.pf.eq2}, we get
\begin{eqnarray}\label{stability.lem.pf.eq3} A_p \|c\|_{q} & \hskip-0.08in\le & \hskip-0.08in
CN^{d(1/q-1/p)_+}
\big\|\big(\|A\Psi_{\lambda_m}^{4N} c\|_p\big)_{\lambda_m\in V_N}\big\|_q\nonumber\\
& \hskip-0.08in
\le  &  \hskip-0.08in CN^{d(1/q-1/p)_+}  \Big( \big\|\big(\|(A-A_N)\Psi_{\lambda_m}^{4N} c\|_p\big)_{\lambda_m\in V_N}\big\|_q
\nonumber \\&  &
\hskip-0.08in
+\big\|\big(\|[A_N, \Psi_{\lambda_m}^{4N}] c\|_p\big)_{\lambda_m\in V_N}\big\|_q+
\big\|\big(\|\Psi_{\lambda_m}^{4N}(A_N-A) c\|_p\big)_{\lambda_m\in V_N}\big\|_q
\nonumber\\
&  &
\hskip-0.08in  +\big\|\big(\|\Psi_{\lambda_m}^{4N}Ac\|_p\big)_{\lambda_m\in V_N}\big\|_q
 \Big),
\end{eqnarray}
where $[A_N, \Psi_{\lambda_m}^{4N}] =A_N \Psi_{\lambda_m}^{4N}-\Psi_{\lambda_m}^{4N}A_N$  is the commutator between $A_N$ and $\Psi_{\lambda_m}^{4N}$ (\cite{sjostrand, sunca11}).

For any $ d 
\in \ell^q$, we obtain from the support property for 
 $\Psi_{\lambda_m}^{2N}$, the equivalence between two norms  $\|\chi_{\lambda_m}^{4N} d\|_p$ and $\|\chi_{\lambda_m}^{4N} d\|_q$,  the polynomial growth property \eqref{polynomialgrowth} and the covering property in Proposition \ref{prop-counting} that
\begin{eqnarray*} 
 &\hskip-0.08in  & \hskip-0.08in  \big\|\big(\|\Psi_{\lambda_m}^{4N}d\|_{p}\big)_{\lambda_m \in V_{N}}\big\|_q \le  \big\|\big(\|\chi_{\lambda_m}^{4N}d\|_{p}\big)_{\lambda_m \in V_{N}}\big\|_q\\
&  \hskip-0.08in\le  & \hskip-0.08in \big\|\big( \|\chi_{\lambda_m}^{4N} d\|_q (\mu(B(\lambda_m, 4N))^{(1/p-1/q)_+}\big)_{\lambda_m \in V_{N}}\big\|_{q}\nonumber\\
&  \hskip-0.08in\le  & \hskip-0.08in C N^{d(1/p-1/q)_+}
\big\|\big( \|\chi_{\lambda_m}^{4N} d\|_q \big)_{\lambda_m \in V_{N}}\big\|_{q}\le    C N^{d(1/p-1/q)_+} \|d\|_q.
\end{eqnarray*}
This together with \eqref {beurling.prop.eq1} yields the following three estimates:
\begin{equation} \label{stability.lem.pf.eq5}
\big\|\big(\|\Psi_{\lambda_m}^{4N}Ac\|_p\big)_{\lambda_m\in V_N}\big\|_q
\le C N^{d(1/p-1/q)_+} \|Ac\|_q,
\end{equation}
\begin{eqnarray}\label{stability.lem.pf.eq6}
 & & \big\|\big(\|(A-A_N)\Psi_{\lambda_m}^{4N} c\|_p\big)_{\lambda_m\in V_N}\big\|_q
\le   \|A-A_N\|_{\mathcal S} \big\|\big(\|\Psi_{\lambda_m}^{4N} c\|_p\big)_{\lambda_m\in V_N}\big\|_q\nonumber\\
 & & \qquad \le  C N^{d(1/p-1/q)_+}
 \|A-A_N\|_{\mathcal S} \|c\|_q,
\end{eqnarray}
and
\begin{eqnarray}\label{stability.lem.pf.eq5+}
\big\|\big(\|\Psi_{\lambda_m}^{4N}(A_N-A) c\|_p\big)_{\lambda_m\in V_N}\big\|_q
& \hskip-0.08in\le &  \hskip-0.08in  C N^{d(1/p-1/q)_+} \|(A_N-A) c\|_q \nonumber\\
& \hskip-0.08in\le &  \hskip-0.08in   C N^{d(1/p-1/q)_+}
 \|A-A_N\|_{\mathcal S} \|c\|_q.
\end{eqnarray}

Applying similar argument, we obtain
\begin{eqnarray}\label{stability.lem.pf.eq7}
 &\hskip-0.08in  & \hskip-0.08in \big\|\big(\|[A_N, \Psi_{\lambda_m}^{4N}] c\|_p\big)_{\lambda_m\in V_N}\big\|_q\nonumber\\
 & \hskip-0.08in \le  &  \hskip-0.08in \big(\sup_{\lambda\in V} \| [A_N, \Psi_{\lambda}^{4N}]\|_{\mathcal S}\big)\big\|\big(\|\chi_{\lambda_m}^{5N} c\|_p\big)_{\lambda_m\in V_N}\big\|_q\nonumber\\
\qquad \quad  & \hskip-0.08in \le &  \hskip-0.08in  C N^{d(1/p-1/q)_+}
\big(\sup_{\lambda\in V} \| [A_N, \Psi_{\lambda}^{4N}]\|_{\mathcal S}\big) \|c\|_q.
\end{eqnarray}
Combining \eqref{stability.lem.pf.eq3}--\eqref{stability.lem.pf.eq7}, we get
\begin{eqnarray}\label{stability.lem.pf.eq8}
A_p\|c\|_q  & \hskip-0.08in \le &  \hskip-0.08in   C N^{d|1/p-1/q|}
\big(\|A-A_N\|_{\mathcal S}+\sup_{\lambda\in V} \| [A_N, \Psi_{\lambda}^{4N}]\|_{\mathcal S}\big) \|c\|_q\nonumber\\
& &  +C N^{d|1/p-1/q|} \|Ac\|_q.
\end{eqnarray}

For any $\lambda\in V$,  we have
\begin{eqnarray} \label{stability.lem.pf.eq9-}
\|[A_{N}, \Psi_{\lambda}^{4N}]\|_{{\mathcal S}}
& \hskip-0.08in = & \hskip-0.08in \Big\|\Big(a(\lambda', \lambda'') \chi_{[0,1]}\Big(\frac{\rho(\lambda', \lambda'')}{N}\Big)\nonumber\\
& & \quad \times
\Big(
\psi_0\Big(\frac{\rho(\lambda', \lambda)}{4N}\Big)- \psi_0\Big(\frac{\rho(\lambda'', \lambda)}{4N}\Big)
 \Big)_{\lambda', \lambda''\in V}\Big\|_{{\mathcal S}}
\nonumber \\
& \hskip-0.08in \le &  \hskip-0.08in \frac{1}{2N} \Big\|\Big(|a(\lambda', \lambda'')|\rho(\lambda', \lambda'')| \chi_{[0,1]}\Big(\frac{\rho(\lambda', \lambda'')}{N}\Big)
 \Big)_{\lambda', \lambda''\in V}\Big\|_{{\mathcal S}}
\nonumber \\
& \hskip-0.08in \le &  \hskip-0.08in  C
N^{-1} \sum_{n=0}^N h_A(n) (n+1)^{d},
\end{eqnarray}
where the last inequality follows from \eqref{sum2}.
Therefore for any $\lambda\in V$,
\begin{eqnarray}\label{stability.lem.pf.eq9}
 \quad  &  \quad &   \|[A_{N}, \Psi_{\lambda}^{4N}]\|_{{\mathcal S}} 
 \le   C N^{-1}  \|A\|_{{\mathcal B}_{r, \alpha}}
\Big(\sum_{n=0}^{N} (n+1)^{-(\alpha-1) r'+d-1}\Big)^{1/r'}\nonumber\\
\quad &\quad  \le & \hskip-0.08in C  \|A\|_{{\mathcal B}_{r, \alpha}}
  \times
\left\{\begin{array}{ll}   N^{-1} &  {\rm if}\  \alpha>1+d/r'\\
N^{-1} (\ln (N+1))^{1-1/r} & {\rm if} \ \alpha=1+d/r'\\
 N^{-\alpha+d/r'} & {\rm if} \ \alpha<1+d/r'.
\end{array}\right.
\end{eqnarray}

For the Schur norm of $A-A_N$, there exists an absolute constant $C_0$, independent of $N\ge 1$ and $A\in {\mathcal B}_{r, \alpha}$, such that
\begin{eqnarray}\label{stability.lem.pf.eq10}
\|A-A_N\|_{\mathcal S}  &  \hskip-0.08in\le & \hskip-0.08in   C
\Big( (N+2)^d h_A(N+1)+ d\sum_{n=N+2}^\infty h_A(n) (n+1)^{d-1}\Big)\nonumber\\
& \hskip-0.08in\le & \hskip-0.08in  C_0 \|A\|_{{\mathcal B}_{r, \alpha}}
N^{-\alpha+d/r'},
\end{eqnarray}
where the first inequality follows from  \eqref{bandapproximation}
and the second inequality is true because
 \begin{eqnarray*}  \sum_{n=0}^{N+1} (h_A(n))^r (n+1)^{\alpha r+d-1} & \hskip-0.08in\ge & \hskip-0.08in  (h_A(N+1))^r \int_0^{N+2} t^{\alpha r+d-1} dt\\
  & \hskip-0.08in = & \hskip-0.08in (\alpha r+d)^{-1}  (h_A(N+1))^r (N+2)^{\alpha r+d}\end{eqnarray*}
and
$$ \sum_{n=N+2}^\infty (n+1)^{-\alpha r'+d-1}  \le
\int_{N+2}^\infty t^{-\alpha r'+d-1} dt\le  \frac{1}{\alpha r'-d} (N+2)^{-\alpha r'+d}. $$
Combining \eqref{stability.lem.pf.eq8}, \eqref{stability.lem.pf.eq9} and \eqref{stability.lem.pf.eq10}, we obtain
\begin{eqnarray}\label{stability.lem.pf.eq11}
A_p\|c\|_q  & \hskip-0.08in \le & \hskip-0.08in  C_1 \|A\|_{{\mathcal B}_{r, \alpha}}
  N^{d|1/p-1/q|-\min(\alpha-d/r', 1)} \|c\|_q\nonumber \\
  & & + C N^{d|1/p-1/q|} \|Ac\|_q
\end{eqnarray}
if $\alpha\ne 1+d/r'$, and
\begin{eqnarray}\label{stability.lem.pf.eq12}
A_p\|c\|_q  & \hskip-0.08in \le &  \hskip-0.08in  C_1 \|A\|_{{\mathcal B}_{r, \alpha}}
  N^{d|1/p-1/q|-1} (\ln N)^{1/r'} \|c\|_q\nonumber\\
   & & \hskip-0.08in + C N^{d|1/p-1/q|} \|Ac\|_q
\end{eqnarray}
if $\alpha=1+d/r'$, where $C_1$ is an absolute constant independent of   matrices $A$, integers $N$ and  sequences $c$.

For $\alpha\ne 1+d/r'$, replacing $N$ in \eqref{stability.lem.pf.eq11} by
$$N_0=\Big \lceil \big( 2 C_1 \|A\|_{{\mathcal B}_{r, \alpha}}/ A_p\big)^{(\min(\alpha-d/r', 1)-d|1/p-1/q|)^{-1}}\Big\rceil, $$
we get from  \eqref{stability.lem.eq1} and \eqref{stability.lem.pf.eq12} that
\begin{eqnarray*}
A_p\|c\|_q  & \hskip-0.08in\le & \hskip-0.08in  \frac{A_p}{2}  \|c\|_q
 +  C \Big(\frac{  \|A\|_{{\mathcal B}_{r, \alpha}}}{ A_p}\Big)^{\theta_1(p, q)}
 \|Ac\|_q.
\end{eqnarray*}
 This proves \eqref{stability.lem.eq2} for $\alpha\ne 1+d/r'$.

 For $\alpha=1+d/r'$, set
 $$ C_2:=  \frac{8 C_1\|A\|_{{\mathcal B}_{r, \alpha}}}{(1-d|1/p-1/q|) A_p}\ge 8 $$
  and
  $$N_1:=\big\lfloor (C_2 \ln  C_2\big)^{(1-d|1/p-1/q|)^{-1}}\big\rfloor \ge \frac{1}{2}
  \big( C_2\ln C_2\big)^{(1-d|1/p-1/q|)^{-1}}.$$
 Then
 \begin{equation}\label{stability.lem.pf.eq13}
C_1  \|A\|_{{\mathcal B}_{r, \alpha}}
 N_1^{d|1/p-1/q|-1}\le \frac{  (1-d|1/p-1/q|)  A_p}{
    4 \ln C_2}
 \end{equation}
 and
 \begin{equation} \label{stability.lem.pf.eq14}
N_1\le   (C_2\ln C_2)^{(1-d|1/p-1/q|)^{-1}}
 \le  C_2^{2(1-d|1/p-1/q|)^{-1}}.
 \end{equation}
 Replacing $N$ in \eqref{stability.lem.pf.eq12} by $N_1$ and applying
 \eqref{stability.lem.pf.eq13} and \eqref{stability.lem.pf.eq14}, we obtain
 \begin{eqnarray*}
A_p\|c\|_q  & \hskip-0.08in \le &  \hskip-0.08in\frac{(1-d|1/p-1/q|) A_p}{4 \ln C_2}\Big (2 (1-d|1/p-1/q|)^{-1} \ln C_2\Big)^{1-1/r} \|c\|_q\nonumber\\
& & \hskip-0.08in + C
(C_2\ln C_2)^{\theta_1(p, q)} \|Ac\|_q\nonumber\\
&\hskip-0.08in \le  & \hskip-0.08in  \frac{A_p}{2}  \|c\|_q
 +  C \big( C_2\ln C_2\big)^{\theta_1(p, q)}
 \|Ac\|_q.
\end{eqnarray*}
This proves
\eqref{stability.lem.eq2} for $\alpha=1+d/r'$.
\end{proof}

Having the above technical lemma,  we  use a bootstrap approach to prove Theorem \ref{stability.thm}, cf. \cite{jaffard90, shincjfa09, sunca11}.

\begin{proof}[Proof of Theorem \ref{stability.thm}] Let  $K$ be a positive integer
 with
 \begin{equation*} 
  d|1/p-1/q|/K<\min(\alpha-d/r', 1).
 \end{equation*}
 Then $K\le K_0$.
Let  $\{p_k\}_{k=0}^K$ be a monotone sequence such that
$$p_0=p, p_K=q\ {\rm  and} \  |1/p_k-1/p_{k+1}|=|1/p-1/q|/K, 0\le k\le K-1.$$
Applying Lemma \ref{stability.lem} repeatedly, we conclude that $A$ has $\ell^{p_k}$-stability for all $1\le k\le K$. Moreover the lower $\ell^{p_k}$-stability bound $A_{p_k}$ satisfies
 \begin{equation}\label{stability.thm.pf.eq2}
 \frac{A_{p_k}}{A_{p_{k+1}}}\le
C 
 \left\{ \begin{array}{ll} \Big(\frac{\|A\|_{{\mathcal B}_{r, \alpha}}}{A_{p_k}}\Big)^{\theta_K(p, q)} & {\rm if} \ \alpha \ne d/r'+1\\
 \Big(\frac{\|A\|_{{\mathcal B}_{r, \alpha}}}{A_{p_k}}
\ln \Big(1+\frac{\|A\|_{{\mathcal B}_{r, \alpha}}}{A_{p_k}}\Big)
\Big)^{\theta_K(p, q)} & {\rm if} \ \alpha = d/r'+1\end{array}\right.
 \end{equation}
 for all $0\le k\le K-1$, where
 $$\theta_K(p, q)=\frac{d|1/p-1/q|}{ K\min(\alpha-d/r', 1)-d|1/p-1/q|}$$
 and $C$ is an absolute constant independent of $A\in {\mathcal B}_{r, \alpha}$.

 For $\alpha\ne 1+d/r'$, we obtain from \eqref{stability.thm.pf.eq2} that
 \begin{equation}  \label{stability.thm.pf.eq3}
 \frac{A_{p_{k+1}}}{\|A\|_{{\mathcal B}_{r, \alpha}}} \ge C
  \Big(\frac{ A_p}{\|A\|_{{\mathcal B}_{r, \alpha}}}\Big)^{(1+\theta_{K}(p, q))^{k+1}}, \ 0\le k\le K-1.
 \end{equation}
 This proves \eqref{stability.thm.eq2} for $\alpha\ne 1+d(1-1/r)$.

 For $\alpha=1+d/r'$,   it follows from
    \eqref{stability.thm.pf.eq2} that
$$ \frac{\|A\|_{{\mathcal B}_{r, \alpha}}}{A_{p_{k+1}}}\le  C \Big(\frac{\|A\|_{{\mathcal B}_{r, \alpha}}}{A_{p_k}}\Big)^{1+\theta_K(p, q)} \Big(\ln \Big(1+\frac{\|A\|_{{\mathcal B}_{r, \alpha}}}{A_{p_k}}\Big)\Big)^{\theta_K(p, q)}, \ 0\le k\le K-1.$$
Applying the above estimate repeatedly, we obtain
\begin{equation*}
\frac{\|A\|_{{\mathcal B}_{r, \alpha}}}{A_{p_{k}}}\le C \Big(\frac{\|A\|_{{\mathcal B}_{r, \alpha}}}{A_{p}}\Big)^{(1+\theta_K(p, q))^k}
 \Big(\ln \Big(1+\frac{\|A\|_{{\mathcal B}_{r, \alpha}}}{A_{p}}\Big)\Big)^{(1+\theta_K(p, q))^k-1}\end{equation*}
 by induction on $1\le k\le K$. This proves
 \eqref{stability.thm.eq2} for $\alpha=1+d/r'$.
\end{proof}

\section{Norm-controlled inversion}
\label{inversion.section}


 By Corollary \ref{wiener.cor},
  matrices in  Banach algebras ${\mathcal B}_{r,\alpha}$ with $1\le r\le \infty$ and $\alpha>d(1-1/r)$  admit norm-controlled inversions
in ${\mathcal B}(\ell^2)$.
In this section, we show that a polynomial can be selected to be the norm-controlled function  $h$ in \eqref{normcontrol.def} if the
the  counting measure  $\mu$ on the graph ${\mathcal G}$  is
normal.

\begin{thm}\label{inverse.thm}
Let  $1\le r\le \infty, r'=r/(r-1)$,  $\alpha> d/r'$,  ${\mathcal G}$ be a connected simple graph with Beurling dimension $d$
and normal  counting measure $\mu$,  and let
 $A\in {\mathcal B}_{r,\alpha}({\mathcal G})$ be invertible in ${\mathcal B}(\ell^2)$. Then
  there exists an absolute constant $C$, independent of $A$,
 such that
\begin{eqnarray}\label{inverse.thm.eq1}
\|A^{-1}\|_{{\mathcal B}_{r, \alpha}}  &\hskip-0.08in \le & \hskip-0.08in C
\|A^{-1}\|_{{\mathcal B}(\ell^2)}
(\|A^{-1}\|_{{\mathcal B}(\ell^2)}\|A\|_{{\mathcal B}_{r, \alpha}})^{(\alpha+d/r)/\min(\alpha-d/r', 1)}
\nonumber\\
& \hskip-0.08in  & \hskip-0.08in  \times \left\{\begin{array}{ll}
\hskip-0.08in 1 & \hskip-0.08in {\rm if} \ \alpha\ne 1+d/r'\\
\hskip-0.08in \big(\ln \big( \|A^{-1}\|_{{\mathcal B}(\ell^2)} \|A\|_{{\mathcal B}_{r, \alpha}}+1\big)\big)^{(d+1)/r'} & \hskip-0.08in
{\rm if} \ \alpha=1+d/r'.
\end{array}\right.
\end{eqnarray}
\end{thm}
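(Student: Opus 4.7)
The plan is to control the off-diagonal decay of $A^{-1}$ directly through the commutator technique of Section \ref{stability.section}, bootstrapped with the $\ell^2$-stability of $A$. A pointwise decay bound on $A^{-1}(\lambda,\nu)$, obtained after finitely many iterations, should then be summed against the weight $(n+1)^{\alpha r+d-1}$ to yield \eqref{inverse.thm.eq1}.

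First, for a fixed vertex $\nu\in V$, set $g=A^{-1}\delta_\nu$ and write $\psi_m:=\Psi_\nu^m$ for the smooth cutoff in \eqref{Psi.def}. The identity $Ag=\delta_\nu$ together with $\psi_m(\nu)=1$ gives $A((1-\psi_m)g)=-[A,\psi_m]g$. Applying $\ell^2$-stability of $A$ and a commutator bound derived as in \eqref{stability.lem.pf.eq9-}--\eqref{stability.lem.pf.eq10} (after splitting $A=A_N+(A-A_N)$ and optimizing over the bandwidth $N$), one gets
\begin{equation*}
\|(1-\psi_m)g\|_{\ell^2}\le \|A^{-1}\|_{{\mathcal B}(\ell^2)}\|[A,\psi_m]\|_{{\mathcal B}(\ell^2)}\|g\|_{\ell^2}
\le C\|A^{-1}\|_{{\mathcal B}(\ell^2)}^2\|A\|_{{\mathcal B}_{r,\alpha}}\,\omega(m),
\end{equation*}
where $\omega(m)=m^{-\min(\alpha-d/r',1)}$, with an extra $(\ln m)^{1/r'}$ in the critical case $\alpha=1+d/r'$. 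Since $(1-\psi_m)(\lambda)=1$ whenever $\rho(\lambda,\nu)>m$, the embedding $\ell^2\hookrightarrow\ell^\infty$ and the choice $m\sim\rho(\lambda,\nu)$ convert this into a first pointwise tail bound $h_{A^{-1}}(n)\le C\|A^{-1}\|_{{\mathcal B}(\ell^2)}^2\|A\|_{{\mathcal B}_{r,\alpha}}\,\omega(n)$.

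This initial rate $\min(\alpha-d/r',1)$ is generally below the threshold $\alpha+d/r$ required by \eqref{bralpha.norm}, so I would iterate. In each subsequent pass the bound on $\|[A,\psi_m]g\|_{\ell^2}$ is sharpened by exploiting that $\psi_m(\mu')-\psi_m(\mu)$ is supported in the annulus $B(\nu,m)\setminus B(\nu,m/2)$, where $g$ is already known from the previous step to decay polynomially. Convolving this decay of $g$ against the off-diagonal decay of $A$ (using Proposition \ref{graph.pr}), rather than applying the crude estimate $\|g\|_{\ell^2}\le\|A^{-1}\|_{{\mathcal B}(\ell^2)}$, raises the decay exponent of $h_{A^{-1}}$ by $\min(\alpha-d/r',1)$ at the cost of one additional factor of $\|A^{-1}\|_{{\mathcal B}(\ell^2)}\|A\|_{{\mathcal B}_{r,\alpha}}$. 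After $K_0$ such rounds, with $K_0$ the integer from Theorem \ref{stability.thm} chosen so that $K_0\min(\alpha-d/r',1)>\alpha+d/r$, the decay of the matrix entries of $A^{-1}$ exceeds $\alpha+d/r$, so summation against $(n+1)^{\alpha r+d-1}$ produces a finite Beurling norm.

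The main obstacle will be the bookkeeping across these iterations. One has to verify that each bootstrap step adds exactly one factor of $\|A^{-1}\|_{{\mathcal B}(\ell^2)}\|A\|_{{\mathcal B}_{r,\alpha}}$ and, in the critical case $\alpha=1+d/r'$, raises the logarithmic power by $1/r'$, so that the total exponent after $K_0$ rounds lands on $(\alpha+d/r)/\min(\alpha-d/r',1)$ and the log-factor on $(\ln(\cdot))^{(d+1)/r'}$ as claimed in \eqref{inverse.thm.eq1}. Keeping the decay rate strictly above $\alpha+d/r$ at the final step (so that the weighted sum converges) without losing an extra factor in the constant is the delicate point; once the sharpened pointwise bound on $h_{A^{-1}}$ is established, substitution into \eqref{bralpha.norm} gives the asserted estimate.
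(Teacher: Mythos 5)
There is a genuine gap at the endpoint of your bootstrap. You aim for a pointwise power bound $h_{A^{-1}}(n)\lesssim n^{-\beta}$ with $\beta>\alpha+d/r$, so that summation against $(n+1)^{\alpha r+d-1}$ in \eqref{bralpha.norm} converges. Such a bound is unattainable in general: the inverse cannot decay faster than $A$ itself. Take $A=I-B$ with $\|B\|_{{\mathcal B}(\ell^2)}$ small and the entries of $B$ comparable to $(1+\rho(\lambda,\lambda'))^{-\alpha-d/r-\epsilon_0}$, which lies in ${\mathcal B}_{r,\alpha}$ for every $\epsilon_0>0$; the Neumann series shows the entries of $A^{-1}$ dominate those of $B$, so no power bound with exponent $K_0\min(\alpha-d/r',1)>\alpha+d/r+\epsilon_0$ can hold once $\epsilon_0$ is small. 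The mechanism that blocks your iteration is visible in the term you discard in each pass: in $[A,\psi_m]g$ the far off-diagonal part of $A$ acts on the part of $g$ near $\nu$, where $g$ is merely of size $\|A^{-1}\|_{{\mathcal B}(\ell^2)}$; this contributes on the order of $\|A-A_{m/2}\|_{\mathcal S}\|g\|_{2}\sim\|A\|_{{\mathcal B}_{r,\alpha}}\,m^{-\alpha+d/r'}\|A^{-1}\|_{{\mathcal B}(\ell^2)}$ in every round, independently of the decay already established for $g$ away from $\nu$. Hence the improvement saturates at an exponent no better than roughly $\alpha$, never strictly above $\alpha+d/r$, and the accounting ``add $\min(\alpha-d/r',1)$ per round for $K_0$ rounds'' collapses. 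Moreover, for $r<\infty$ membership in ${\mathcal B}_{r,\alpha}$ is an envelope/summability statement, so even attaining the exponent $\alpha$ pointwise would not by itself give $\|A^{-1}\|_{{\mathcal B}_{r,\alpha}}<\infty$.

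The paper's proof avoids exactly this obstruction: instead of upgrading a pointwise rate in finitely many steps, it fixes one coarse scale $N_2\simeq(\|A^{-1}\|_{{\mathcal B}(\ell^2)}\|A\|_{{\mathcal B}_{r,\alpha}})^{1/\min(\alpha-d/r',1)}$ (with a logarithmic correction when $\alpha=1+d/r'$), at which the fusion-vertex commutator matrix $V_{A,N_2}$ is small in the coarse algebra ${\mathcal B}_{r,\alpha;N_2}$ (Lemma \ref{inverse.lem} together with Proposition \ref{VNalgebra.prop}), sums the full Neumann series $W_{A,N_2}=\sum_{l\ge1}(V_{A,N_2})^l$ there, and reads off order-$\alpha$ decay of $A^{-1}$ at scale $N_2$ in one stroke from \eqref{inverse.thm.pf.eq9}; the only loss is the single rescaling factor $N_2^{\alpha+d/r}$, which is precisely where the exponent $(\alpha+d/r)/\min(\alpha-d/r',1)$ and the factor $(\ln(\cdot))^{(d+1)/r'}$ in \eqref{inverse.thm.eq1} come from. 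Note also that the normality of $\mu$, which your sketch never invokes, is what controls the fusion-vertex counting (Proposition \ref{fusion.pr}) underlying the coarse algebra. To salvage your plan you would have to replace ``power decay exceeding $\alpha+d/r$'' by ``envelope at a $\kappa(A)$-dependent coarse scale with finite ${\mathcal B}_{r,\alpha;N}$-norm'', which is essentially the paper's argument.
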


For invertible matrices $A\in {\mathcal B}_{r, \alpha}({\mathcal G})$ with $\alpha>1+d/r'$,
it follows from  Theorem \ref{inverse.thm} that
\begin{equation}\label{inverse.cor.eq1}
\|A^{-1}\|_{{\mathcal B}_{r, \alpha}}  \le C
\|A^{-1}\|_{{\mathcal B}(\ell^2)}
(\|A^{-1}\|_{{\mathcal B}(\ell^2)}\|A\|_{{\mathcal B}_{r, \alpha}})^{\alpha+d/r}.
\end{equation}
A weak version of the above estimate, with the exponent $\alpha+d/r$ in \eqref{inverse.cor.eq1} replaced by a larger exponent $2\alpha+2+2/(\alpha-2)$, is established in \cite{grochenig14-1} for matrices in  the Jaffard algebra ${\mathcal J}_\alpha({\mathcal Z})={\mathcal B}_{\infty, \alpha}({\mathcal Z})$, where $r=\infty$.

The estimate  \eqref{inverse.cor.eq1} on norm-controlled inversion is almost optimal, as shown in the following example that for any $\epsilon>0$ there does not exist an absolute constant  $C_\epsilon$ such that
\begin{equation}\label{inverse.cor.eq2}
\|A^{-1}\|_{{\mathcal B}_{r, \alpha}}  \le C_\epsilon
\|A^{-1}\|_{{\mathcal B}(\ell^2)}
(\|A^{-1}\|_{{\mathcal B}(\ell^2)}\|A\|_{{\mathcal B}_{r, \alpha}})^{\alpha+d/r-1-\epsilon}.
\end{equation}

\begin{ex}\label{agamma.example} {\rm
Let $1\le r\le \infty, \alpha>1-1/r$ and ${\mathcal G}={\mathcal Z}^d$ with $d=1$.
For sufficiently small $\gamma \in (0, 1)$, define  $A_\gamma=(a_\gamma(i,j))_{i,j\in \ZZ}$
 by
\begin{equation}\label{agamma.eq1}
a_\gamma(i,j)=\left\{\begin{array} {ll} 1  & {\rm if} \ j=i\\
-e^{-\gamma} & {\rm if} \ j=i+1\\
0 & {\rm elsewhere}.
\end{array}\right.
\end{equation}
Then
\begin{equation}\label{aepsilon.eq1}
\|A_\gamma\|_{{\mathcal B}_{r, \alpha}}=  (1+  2^{\alpha r}e^{-\gamma r} )^{1/r}\in  [2^{\alpha-1}, 2^{\alpha+1}].
\end{equation}
 Observe that
$A_\gamma$ is invertible in ${\mathcal B}(\ell^2)$ and  its inverse is given by  $B_\gamma=(b_\gamma(i,j))_{i,j\in \ZZ}$, where
$$   b_\gamma(i,j)=\left\{\begin{array} {ll} 
e^{-(j-i)\gamma} & {\rm if} \ j\ge i\\
0 & {\rm elsewhere}.
\end{array}\right.
$$
Therefore  for sufficiently small $\gamma\in (0, 1)$, we have
\begin{equation} \|A^{-1}\|_{{\mathcal B}(\ell^2)}= (1-e^{-\gamma})^{-1}\in [\gamma^{-1}, 2 \gamma^{-1}]\end{equation}
and
\begin{eqnarray}\|A^{-1}\|_{{\mathcal B}_{r, \alpha}} & \hskip-0.08in = &  \hskip-0.08in\left\{\begin{array}{ll}
(\sum_{n=0}^\infty (n+1)^{\alpha r} e^{-n r \gamma}\big)^{1/r} & {\rm if} \ 1\le r<\infty\\
 \sup_{n\ge 0} (n+1)^\alpha e^{-n\gamma} & {\rm if} \ r=\infty
 \end{array}\right.\nonumber\\
 & \hskip-0.08in \in & \hskip-0.08in   \gamma^{-\alpha-1/r} [1, 2]\times
 \left\{\begin{array}{ll}
r ^{-\alpha-1/r} (\Gamma(\alpha r+1))^{1/r}  & {\rm if} \ 1\le r<\infty\\
 (\alpha/e)^\alpha & {\rm if} \ r=\infty,
 \end{array}\right.
 \end{eqnarray}
 where $\Gamma(s)=\int_0^\infty x^{s-1} e^{-x} dx$ is the Gamma function.
 Hence for sufficiently small $\gamma$, the left hand side of \eqref{inverse.cor.eq2}
is of order $\gamma^{-\alpha-1/r}$ and the right hand side of \eqref{inverse.cor.eq2}
is of order $\gamma^{-\alpha-1/r+\epsilon}$ for $\alpha>1+d/r'$. This proves
\eqref{inverse.cor.eq2}.
}\end{ex}

To prove Theorem \ref{inverse.thm},  we need a distribution property for
 fusion vertices of a maximal $N$-disjoint set.

 \begin{prop} \label{fusion.pr}
Let ${\mathcal G}:=(V,E)$ be a connected simple graph with Beurling dimension $d$ and normal  counting measure  $\mu$, and
let
$V_N, 1\le N\le {\rm diam}({\mathcal G})$, be maximal $N$-disjoint sets of  fusion vertices.
Then for all $\lambda\in V$,
\begin{equation}\label{fusion.pr.eq1}
\#\{\lambda_m\in V_N, \rho(\lambda_m, \lambda)\le NR\}\le \frac{D_1({\mathcal G})}{ D_2({\mathcal G})} (R+1)^d, \ 0\le R\le \frac{{\rm diam}({\mathcal G})}{N}+1,
\end{equation}
 and
 \begin{equation}\label{fusion.pr.eq2}
\#\{\lambda_m\in V_N, \rho(\lambda_m, \lambda)\le NR\}\ge \frac{D_2({\mathcal G})}{ D_1({\mathcal G})} \Big(\frac{R-2}{3}\Big)^d, \ 3\le R\le
\frac{{\rm diam}({\mathcal G})}{N}+1.
\end{equation}
\end{prop}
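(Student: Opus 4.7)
The plan is to prove both bounds by comparing the measure of a ball around $\lambda$ to the measures of the balls centered at the fusion vertices, exploiting the disjointness \eqref{max2} for the upper bound and the covering consequence of \eqref{max1} for the lower bound, and then invoking the two-sided normal estimate \eqref{ahlfors.def} on both sides.

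For the upper bound \eqref{fusion.pr.eq1}, first I would observe that whenever $\lambda_m\in V_N$ satisfies $\rho(\lambda_m,\lambda)\le NR$, the triangle inequality gives the inclusion $B(\lambda_m,N)\subset B(\lambda,N(R+1))$. Since the balls $\{B(\lambda_m,N)\}_{\lambda_m\in V_N}$ are pairwise disjoint by \eqref{max2}, summing their measures yields
$$
\#\{\lambda_m\in V_N:\rho(\lambda_m,\lambda)\le NR\}\cdot \min_{\lambda_m\in V_N}\mu(B(\lambda_m,N))\le \mu(B(\lambda,N(R+1))).
$$
Bounding the left factor below by $D_2({\mathcal G})(N+1)^d$ and the right-hand side above by $D_1({\mathcal G})(N(R+1)+1)^d$ via \eqref{ahlfors.def}, and using the elementary inequality $N(R+1)+1\le (N+1)(R+1)$, produces \eqref{fusion.pr.eq1}.

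For the lower bound \eqref{fusion.pr.eq2}, the maximality condition \eqref{max1} yields the covering $V\subset\bigcup_{\lambda_m\in V_N}B(\lambda_m,2N)$ stated in the text. Restricting this covering to the ball $B(\lambda,N(R-2))$, the triangle inequality shows that only fusion vertices with $\rho(\lambda_m,\lambda)\le N(R-2)+2N=NR$ can contribute, so that
$$
\mu(B(\lambda,N(R-2)))\le \#\{\lambda_m\in V_N:\rho(\lambda_m,\lambda)\le NR\}\cdot \max_{\lambda_m\in V_N}\mu(B(\lambda_m,2N)).
$$
Applying \eqref{ahlfors.def} to both sides gives a lower bound proportional to $\big((N(R-2)+1)/(2N+1)\big)^d$, and a short calculation shows $(N(R-2)+1)/(2N+1)\ge (R-2)/3$ for all $N\ge 1$ and $R\ge 3$, yielding \eqref{fusion.pr.eq2}.

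The only real bookkeeping issue, and what I expect to be the main obstacle, is that the two-sided normal estimate \eqref{ahlfors.def} is only asserted in the range $0\le R\le \mathrm{diam}({\mathcal G})$. This is precisely the role of the hypothesis $R\le \mathrm{diam}({\mathcal G})/N+1$: it forces the radii $N(R-2)$ and $2N$ used in the lower-bound step to lie within this range, so that both the lower estimate on $\mu(B(\lambda,N(R-2)))$ and the upper estimate on $\mu(B(\lambda_m,2N))$ are available. For the upper bound the radius $N(R+1)$ may exceed $\mathrm{diam}({\mathcal G})$, but then $B(\lambda,N(R+1))=V$ and $\mu(V)\le D_1({\mathcal G})(\mathrm{diam}({\mathcal G})+1)^d\le D_1({\mathcal G})(N(R+1)+1)^d$, so no adjustment is needed.
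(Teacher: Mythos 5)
Your proposal is correct and follows essentially the same route as the paper: the upper bound via the inclusion $B(\lambda_m,N)\subset B(\lambda,N(R+1))$, the disjointness \eqref{max2} and the two-sided normal estimate \eqref{ahlfors.def}, and the lower bound via the covering $B(\lambda,(R-2)N)\subset\bigcup_{\rho(\lambda_m,\lambda)\le NR}B(\lambda_m,2N)$ together with \eqref{ahlfors.def}. Your extra remarks on keeping the radii within the range $[0,\mathrm{diam}({\mathcal G})]$ are a careful (and harmless) refinement of the same argument.
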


\begin{proof}
Take a vertex $\lambda\in V$ and a  nonnegative integer   $R$. Set
$$E=\{\lambda_m\in V_N: \  \rho(\lambda_m, \lambda)\le NR\}.$$
Then
\begin{equation*}
\cup_{\lambda_m\in E} B(\lambda_m, N)
\subset B(\lambda, N(R+1)).
\end{equation*}
This, together with   \eqref{polynomialgrowth}, \eqref{ahlfors.def} and \eqref{max2},
 implies that
\begin{eqnarray*}
D_2({\mathcal G})(N+1)^d \# E  &\hskip-0.08in  \le & \hskip-0.08in  \sum_{\lambda_m\in E} \mu(B(\lambda_m, N))\nonumber\\
& \hskip-0.08in = &\hskip-0.08in
 \mu\big(\cup_{\lambda_m\in E}B(\lambda_m, N)\big)
\le D_1({\mathcal G})(N(R+1)+1)^d.
\end{eqnarray*}
Hence the upper bound estimate in \eqref{fusion.pr.eq1} follows.

Take a vertex $\lambda\in V$ and an integer   $R\ge 3$.  Applying the covering property   \eqref{counting}, we have
$$ B(\lambda, (R-2)N)\subset \cup_{\lambda_m\in E} B(\lambda_m, 2N).$$
This together with  \eqref{polynomialgrowth} and \eqref{ahlfors.def}
implies that
\begin{equation*} 
D_2({\mathcal G}) ((R-2)N+1)^d\le D_1({\mathcal G})(2N+1)^d \# E.
\end{equation*}
Hence the lower bound estimate in \eqref{fusion.pr.eq2} follows.
\end{proof}

Let
$V_N, N\ge 1$, be maximal $N$-disjoint sets of  fusion vertices.
Let
$ {\mathcal B}_{r, \alpha; N}$ contain all matrices  $B:=(b(\lambda_m, \lambda_{m'}))_{\lambda_m, \lambda_{m'}\in V_N}$
with $\|B\|_{{\mathcal B}_{r, \alpha; N}}<\infty$,
where  $h_{B, N}(n)= \sup_{\rho(\lambda_m, \lambda_{m'})\ge Nn} |b(\lambda_m, \lambda_{m'})|, n\ge 0$, and
$$\|B\|_{{\mathcal B}_{r, \alpha; N}}=\left\{\begin{array}
{ll} \big(\sum_{n=0}^{\infty} (h_{B, N} (n))^r (n+1)^{\alpha r+d-1} \big)^{1/r}
  & {\rm if} \ 1\le r<\infty\\
   \sup_{n\ge 0}  h_{B, N}(n) (n+1)^\alpha & {\rm if} \ r=\infty,
\end{array}\right.
$$
  cf. the Beurling class ${\mathcal B}_{r, \alpha}({\mathcal G})$  in \eqref{bralpha.norm}.
Clearly $\|\cdot\|_{{\mathcal B}_{r, \alpha; N}}$ is a norm.
The next proposition states that $ {\mathcal B}_{r, \alpha; N}$ are Banach algebras.

\begin{prop}\label{VNalgebra.prop}
Let ${\mathcal G},  r, \alpha$ be as in Theorem \ref{inverse.thm}, and let
$V_N, N\ge 1$, be a  maximal $N$-disjoint set of  fusion vertices.
Then there exists an absolute constant
$C$, independent of integers $N\ge 1$,  such that
\begin{equation}\label{VNalgebra.lem.eq1}
\|AB\|_{{\mathcal B}_{r, \alpha; N}}\le  C
  \|A\|_{{\mathcal B}_{r, \alpha; N}}
 \|B\|_{{\mathcal B}_{r, \alpha; N}}\ \ {\rm for \ all} \  A, B\in {\mathcal B}_{r, \alpha; N}.
\end{equation}
\end{prop}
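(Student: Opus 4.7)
The plan is to reproduce the argument for Proposition \ref{beurling.prop}(iii) on the rescaled discrete space $(V_N, \rho/N)$, using Proposition \ref{fusion.pr} as the replacement for the polynomial growth property \eqref{polynomialgrowth}. The crucial point is that the bounds in Proposition \ref{fusion.pr} depend only on the constants $D_1({\mathcal G}), D_2({\mathcal G})$ of the underlying graph, not on $N$, so every constant produced along the way is automatically $N$-independent.

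First I would establish an analogue of Proposition \ref{beurling.prop}(i) on $V_N$, namely a \emph{uniform in $N$} Schur bound
\begin{equation*}
\|A\|_{{\mathcal S}(V_N)}:=\max\Big(\sup_{\lambda_m\in V_N}\sum_{\lambda_{m'}\in V_N}|a(\lambda_m,\lambda_{m'})|,\ \sup_{\lambda_{m'}\in V_N}\sum_{\lambda_m\in V_N}|a(\lambda_m,\lambda_{m'})|\Big) \le C\|A\|_{{\mathcal B}_{r,\alpha;N}}.
\end{equation*}
For a fixed row index $\lambda_m$ and $F_N(n):=\#\{\lambda_{m'}\in V_N:\rho(\lambda_m,\lambda_{m'})\le Nn\}$, Proposition \ref{fusion.pr} gives $F_N(n)\le (D_1/D_2)(n+1)^d$. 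An Abel summation as in Proposition \ref{graph.pr}(ii), applied to $\sum_{\lambda_{m'}\in V_N}h_{A,N}(\lfloor\rho(\lambda_m,\lambda_{m'})/N\rfloor)$, reduces matters to $\sum_n h_{A,N}(n)(n+1)^{d-1}$; this series is controlled by $\|A\|_{{\mathcal B}_{r,\alpha;N}}$ via H\"older with conjugate weight $(n+1)^{-\alpha r'+d-1}$, whose summability is exactly the hypothesis $\alpha>d/r'$.

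Next I would run the standard splitting trick for the product $AB=(c(\lambda_m,\lambda_{m'}))$. For any triple of fusion vertices, either $\rho(\lambda_m,\lambda_{m''})\ge \rho(\lambda_m,\lambda_{m'})/2$ or $\rho(\lambda_{m''},\lambda_{m'})\ge \rho(\lambda_m,\lambda_{m'})/2$, so
\begin{equation*}
|c(\lambda_m,\lambda_{m'})|\le \|B\|_{{\mathcal S}(V_N)}\, h_{A,N}\!\big(\lfloor\rho(\lambda_m,\lambda_{m'})/(2N)\rfloor\big) + \|A\|_{{\mathcal S}(V_N)}\, h_{B,N}\!\big(\lfloor\rho(\lambda_m,\lambda_{m'})/(2N)\rfloor\big).
\end{equation*}
Taking suprema over $\rho(\lambda_m,\lambda_{m'})\ge Nn$ gives
\begin{equation*}
h_{AB,N}(n)\le \|B\|_{{\mathcal S}(V_N)}\,h_{A,N}(\lfloor n/2\rfloor)+\|A\|_{{\mathcal S}(V_N)}\,h_{B,N}(\lfloor n/2\rfloor).
\end{equation*}
Plugging this into the definition of $\|\cdot\|_{{\mathcal B}_{r,\alpha;N}}$ and changing variable $k=\lfloor n/2\rfloor$ produces a factor bounded by $2^{\alpha+d/r+1}$ (exactly as in \eqref{beurling.prop.pf.eq1}--\eqref{beurling.prop.pf.eq2}), yielding
\begin{equation*}
\|AB\|_{{\mathcal B}_{r,\alpha;N}}\le C\big(\|A\|_{{\mathcal B}_{r,\alpha;N}}\|B\|_{{\mathcal S}(V_N)}+\|B\|_{{\mathcal B}_{r,\alpha;N}}\|A\|_{{\mathcal S}(V_N)}\big).
\end{equation*}
Combining this with the Schur bound from the first step yields \eqref{VNalgebra.lem.eq1}.

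The only delicate point, and the main obstacle, is quantitative: one must verify that all constants produced above depend only on $d$, $\alpha$, $r$, $D_1({\mathcal G})$ and $D_2({\mathcal G})$, and in no way on $N$. This is exactly why Proposition \ref{fusion.pr} was proved in the stated $N$-uniform form. Once one checks that the H\"older/Abel estimate in Step 1 uses only the geometric constants and $\alpha>d/r'$, and that the splitting constant $2^{\alpha+d/r+1}$ in Step 3 is clearly $N$-independent, the proof goes through for all $N\ge 1$ with a single absolute constant $C$.
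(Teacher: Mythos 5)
Your proposal is correct and is essentially the proof the paper has in mind: the paper omits the details, saying only that Proposition \ref{VNalgebra.prop} follows by the argument of Proposition \ref{beurling.prop}, and you carry out exactly that transfer, correctly identifying that the $N$-uniform counting bound \eqref{fusion.pr.eq1} from Proposition \ref{fusion.pr} plays the role of the polynomial growth property so that the Schur-norm estimate, the splitting $h_{AB,N}(n)\le \|B\|_{{\mathcal S}(V_N)}h_{A,N}(\lfloor n/2\rfloor)+\|A\|_{{\mathcal S}(V_N)}h_{B,N}(\lfloor n/2\rfloor)$, and the dilation factor $2^{\alpha+d/r}$ all come with constants independent of $N$.
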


The above lemma can be proved by following the argument used in Proposition \ref{beurling.prop}. We omit the detailed proof here.

\smallskip

To prove Theorem \ref{inverse.thm}, we also need a technical lemma.

%
\begin{lem}\label{inverse.lem}
Let ${\mathcal G}, A, r, r',  \alpha$ be as in Theorem \ref{inverse.thm}, and let
$V_N, N\ge 2$, be maximal $N$-disjoint sets of  fusion vertices.
Then there exists an absolute constant $C_0$ independent of $A$ such that
\begin{eqnarray} \label{inverse.lem.eq2}
 \|\Psi_{\lambda_m}^{4N} c\|_{2} & \le &  2\|A^{-1}\|_{{\mathcal B}(\ell^2)} \Big(\|\Psi_{\lambda_m}^{4N} A c\|_{\ell^2}\nonumber\\
 & & \quad +
 \sum_{\lambda_{m'}\in V_N}
 \|\chi_{\lambda_m}^{5N} [\Psi_{\lambda_m}^{4N},  A]\chi_{\lambda_{m'}}^{4N}\|_{{\mathcal S}}
 \|\Psi_{\lambda_{m'}}^{4N} c\|_{2}\Big)
 \end{eqnarray}
for all vertices $\lambda_m\in V_N$, sequences $c\in \ell^2$ and
integers $N$ satisfying
\begin{equation}  \label{inverse.lem.eq1}
N^{\alpha-d/r'}\ge   2  C_0
 \|A^{-1}\|_{{\mathcal B}(\ell^2)}\|A\|_{{\mathcal B}_{r, \alpha}},
  \end{equation}
      where $ [\Psi_{\lambda_m}^{4N},  A]= \Psi_{\lambda_m}^{4N}A-A\Psi_{\lambda_m}^{4N}$ and $C_0$ is the constant in \eqref{stability.lem.pf.eq10}.
  Moreover, there exists an absolute constant $C$ independent of $A\in {\mathcal B}_{r, \alpha}({\mathcal G})$ such that
\begin{eqnarray} \label{inverse.lem.eq3}
  \Big(\sum_{n=0}^\infty (h_{A, N}(n))^{r} (n+1)^{\alpha r+d-1}\Big)^{1/r} & \hskip-0.08in\le & \hskip-0.08in
  C  \|A\|_{{\mathcal B}_{r, \alpha}} N^{-\min(\alpha-d/r', 1)} \nonumber\\
  & \hskip-0.08in & \hskip-0.08in
\times \left \{\begin{array}{ll} 1  & {\rm if} \ \alpha\ne 1+d/r'\\
 (\ln N)^{1/r'} & {\rm if} \ \alpha=1+d/r'
\end{array} \right.
 \end{eqnarray}
 if $1\le r<\infty$, and
\begin{equation} \label{inverse.lem.eq4}
\sup_{n\ge 0} h_{A, N}(n) (n+1)^\alpha \le C  \|A\|_{{\mathcal B}_{\infty, \alpha}} N^{-\min(\alpha-d, 1)}
 \left \{\begin{array}{ll}  1 & {\rm if} \ \alpha\ne d+1\\
\ln N & {\rm if} \ \alpha=d+1
\end{array} \right.
 \end{equation}
if $r=\infty$, where
  \begin{equation}\label{hAN.def}
  h_{A, N}(n)=\sup_{\rho(\lambda_m, \lambda_{m'})\ge Nn} \| \chi_{\lambda_m}^{5N}[\Psi_{\lambda_m}^{4N},  A]
\chi_{\lambda_{m'}}^{4N}\|_{{\mathcal S}},\  n\ge 0.
  \end{equation}
\end{lem}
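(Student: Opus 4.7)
The plan is to tackle the operator inequality \eqref{inverse.lem.eq2} and the Schur-norm decay estimates \eqref{inverse.lem.eq3}--\eqref{inverse.lem.eq4} separately, since they rest on distinct ingredients.

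For \eqref{inverse.lem.eq2} I would start from $\|\Psi_{\lambda_m}^{4N} c\|_2 \le \|A^{-1}\|_{\mathcal{B}(\ell^2)} \|A \Psi_{\lambda_m}^{4N} c\|_2$ and split $A\Psi_{\lambda_m}^{4N} c = \chi_{\lambda_m}^{5N} A \Psi_{\lambda_m}^{4N} c + (I-\chi_{\lambda_m}^{5N}) A \Psi_{\lambda_m}^{4N} c$. Since $\Psi_{\lambda_m}^{4N}$ is supported on $B(\lambda_m, 4N)$, the tail matrix $(I-\chi_{\lambda_m}^{5N}) A \chi_{\lambda_m}^{4N}$ only keeps entries $a(\lambda',\lambda'')$ with $\rho(\lambda', \lambda'') \ge N$, so its Schur norm is bounded by $\|A-A_N\|_{\mathcal{S}}$ and hence by $C_0 \|A\|_{{\mathcal B}_{r,\alpha}} N^{-\alpha + d/r'}$ via \eqref{stability.lem.pf.eq10}. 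The hypothesis \eqref{inverse.lem.eq1} is calibrated precisely so that this factor is at most $(2\|A^{-1}\|_{\mathcal{B}(\ell^2)})^{-1}$, which lets me absorb the tail on the left and pay only the factor $2$. For the remaining piece, the commutator identity $A\Psi_{\lambda_m}^{4N} = \Psi_{\lambda_m}^{4N} A - [\Psi_{\lambda_m}^{4N}, A]$ combined with $\chi_{\lambda_m}^{5N} \Psi_{\lambda_m}^{4N} = \Psi_{\lambda_m}^{4N}$ gives $\chi_{\lambda_m}^{5N} A \Psi_{\lambda_m}^{4N} c = \Psi_{\lambda_m}^{4N} A c - \chi_{\lambda_m}^{5N}[\Psi_{\lambda_m}^{4N}, A] c$. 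Finally, using Proposition \ref{prop-counting} I would fix a partition $V = \sqcup_{\lambda_{m'}} V_{m'}$ with $V_{m'} \subset B(\lambda_{m'}, 2N)$, so that $c = \sum_{\lambda_{m'}} \chi_{V_{m'}} c = \sum_{\lambda_{m'}} \chi_{V_{m'}} \Psi_{\lambda_{m'}}^{4N} c$ (because $\psi_0 \equiv 1$ on $B(\lambda_{m'}, 2N)$), and use the entrywise monotonicity of the Schur norm with $\chi_{V_{m'}} \le \chi_{\lambda_{m'}}^{4N}$ to convert $\chi_{V_{m'}}$ to $\chi_{\lambda_{m'}}^{4N}$ in the final sum.

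For \eqref{inverse.lem.eq3}--\eqref{inverse.lem.eq4}, I would bound $h_{A,N}(n)$ in two complementary ranges. In the far range $n \ge 10$, whenever $\rho(\lambda_m, \lambda_{m'}) \ge nN$ and $\lambda'' \in B(\lambda_{m'}, 4N)$, we have $\rho(\lambda'', \lambda_m) \ge (n-4)N > 4N$, forcing $\psi_0(\rho(\lambda'', \lambda_m)/(4N)) = 0$; the commutator entry collapses to $\psi_0(\rho(\lambda',\lambda_m)/(4N)) a(\lambda',\lambda'')$, which has modulus at most $h_A((n-9)N)$ since $\rho(\lambda',\lambda'') \ge (n-9)N$, and polynomial growth then yields $h_{A,N}(n) \le C N^d h_A((n-9)N)$. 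In the near range $n \le 9$, I would use the Lipschitz bound $|\psi_0(x)-\psi_0(y)| \le 2|x-y|$ to get $|[\Psi_{\lambda_m}^{4N}, A](\lambda',\lambda'')| \le |a(\lambda',\lambda'')| \min(1, \rho(\lambda',\lambda'')/(2N))$, split the row sum at $\rho(\lambda',\lambda'') = 2N$, and apply Proposition \ref{graph.pr}(i)--(ii). A H\"older decomposition $h_A(k)(k+1)^d = [h_A(k)(k+1)^{\alpha+(d-1)/r}] \cdot (k+1)^{d - \alpha - (d-1)/r}$ then converts the weighted sum into $\|A\|_{{\mathcal B}_{r,\alpha}}$ times an $\ell^{r'}$-factor whose dual exponent $-r'(\alpha-1) + d - 1$ is $<-1$, $=-1$, or $>-1$ precisely when $\alpha > 1 + d/r'$, $\alpha = 1 + d/r'$, or $\alpha < 1 + d/r'$, producing the stated trichotomy and the $(\ln N)^{1/r'}$ factor at the critical value.

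Assembling, the head $\sum_{n=0}^{9} h_{A,N}(n)^r (n+1)^{\alpha r + d - 1}$ yields after taking the $r$-th root the advertised bound $C \|A\|_{{\mathcal B}_{r,\alpha}} N^{-\min(\alpha - d/r', 1)}$ (with the log correction at $\alpha = 1 + d/r'$), while the tail $\sum_{n \ge 10} (N^d h_A((n-9)N))^r (n+1)^{\alpha r + d - 1}$, after the substitution $m = n - 9$, is controlled via the sampling inequality $\sum_{m \ge 1} h_A(mN)^r m^{\alpha r + d - 1} \le C N^{-(\alpha r + d)} \|A\|_{{\mathcal B}_{r,\alpha}}^r$ (obtained by grouping $k \in [(m-1)N, mN)$ and using monotonicity of $h_A$), giving $C \|A\|_{{\mathcal B}_{r,\alpha}} N^{-(\alpha - d/r')}$, which is always dominated by the head. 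The case $r = \infty$ of \eqref{inverse.lem.eq4} follows by the same two-regime template with suprema replacing sums. The main technical obstacle I anticipate is the absorption step in the first half: one must recognize that $(I-\chi_{\lambda_m}^{5N}) A \Psi_{\lambda_m}^{4N}$ only sees matrix entries with $\rho(\lambda',\lambda'') \ge N$, making \eqref{stability.lem.pf.eq10} applicable, and then verify that \eqref{inverse.lem.eq1} is the exact threshold under which this bound is $\le 1/(2\|A^{-1}\|_{\mathcal{B}(\ell^2)})$; the careful accounting of the logarithmic factor at the borderline $\alpha = 1 + d/r'$ is the secondary subtle point.
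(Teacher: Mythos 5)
Your proposal is correct and follows essentially the same route as the paper: invert, split off the far part $(I-\chi_{\lambda_m}^{5N})A\chi_{\lambda_m}^{4N}$ whose Schur norm is controlled by $\|A-A_N\|_{\mathcal S}$ and absorbed via \eqref{inverse.lem.eq1}, decompose $c$ over the fusion vertices to get the $\|\chi_{\lambda_m}^{5N}[\Psi_{\lambda_m}^{4N},A]\chi_{\lambda_{m'}}^{4N}\|_{\mathcal S}$ coefficients, and then estimate $h_{A,N}$ by a near-range Lipschitz/H\"older argument plus a far-range support argument giving $N^d h_A(\cdot)$. The only differences are cosmetic — you use a disjoint partition subordinate to the $2N$-ball cover where the paper uses the normalized sum $(\Psi^{4N})^{-1}$, and you spell out the far-range sampling summation that the paper leaves implicit.
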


\begin{proof} 
 We follow the argument in \cite{sjostrand, sunca11} where ${\mathcal G}=\ZZ^d$ with $d=1$.  Take
  $\lambda_m \in V_{N}, c:=(c(\lambda))_{\lambda\in V}\in \ell^2$, and  let $A_N$ be as in   \eqref{AN.def}.
  By the invertibility  on $A$, we have
\begin{eqnarray}\label{inverse.lm.pf.eq1}
  \|\Psi_{\lambda_m}^{4N} c\|_{2}   & \hskip-0.08in \le & \hskip-0.08in    \|A^{-1}\|_{{\mathcal B}(\ell^2)}
    \| A \Psi_{\lambda_m}^{4N} c\|_{2}\nonumber\\
&   \hskip-0.08in \le  & \hskip-0.08in
     \|A^{-1}\|_{{\mathcal B}(\ell^2)}  \| \Psi_{\lambda_m}^{4N} Ac\|_{2}+
     \|A^{-1}\|_{{\mathcal B}(\ell^2)}  \| [\Psi_{\lambda_m}^{4N},  A] c\|_{2}.
     \end{eqnarray}

     By the covering property in Proposition  \ref{prop-counting},
 $\Psi^{4N}:=\sum_{\lambda_{m'}\in V_N} \Psi_{\lambda_{m'}}^{4N}$ is a diagonal matrix
 with  bounded inverse, and
 \begin{equation}\label{inverse.lm.pf.eq2}  \|(\Psi^{4N})^{-1}\|_{{\mathcal B}(\ell^2)}\le 1.
 \end{equation}
  Therefore
      \begin{eqnarray} \label{inverse.lm.pf.eq3}
 \| [\Psi_{\lambda_m}^{4N}, A] c\|_{2}
  & \hskip-0.08in \le   &\hskip-0.08in
 \| \chi_{\lambda_m}^{5N} [\Psi_{\lambda_m}^{4N}, A]
  c\|_{2}
  +
    \|(I- \chi_{\lambda_m}^{5N})A\chi_{\lambda_m}^{4N}
    \Psi_{\lambda_m}^{4N} c\|_{2}\nonumber
    \\
 & \hskip-0.08in\le &\hskip-0.08in
 \sum_{\lambda_{m'}\in V_N}
   \| \chi_{\lambda_m}^{5N}[\Psi_{\lambda_m}^{4N}, A]
  (\Psi^{4N})^{-1} \Psi_{\lambda_{m'}}^{4N} c\|_{2}\nonumber\\
  & &
  +
    \|(I- \chi_{\lambda_m}^{5N})A 
    \Psi_{\lambda_m}^{4N} c\|_{2}\nonumber \\
  & \hskip-0.08in\le & \hskip-0.08in
 \sum_{\lambda_{m'}\in V_N}
   \|\chi_{\lambda_m}^{5N}  [\Psi_{\lambda_m}^{4N}, A]
 \chi_{\lambda_{m'}}^{4N}\|_{{\mathcal B}(\ell^2)}
 \|(\Psi^{4N})^{-1}\|_{{\mathcal B}(\ell^2)}
 \|\Psi_{\lambda_{m'}}^{4N} c\|_{2}\nonumber\\
 & &  +    \|(I- \chi_{\lambda_m}^{5N})A \chi_{\lambda_m}^{4N}\|_{{\mathcal S}}
   \| \Psi_{\lambda_m}^{4N} c\|_{2}\nonumber \\
   & \hskip-0.08in \le &  \hskip-0.08in
 \sum_{\lambda_{m'}\in V_N}
   \|\chi_{\lambda_m}^{5N}  [\Psi_{\lambda_m}^{4N}, A]
 \chi_{\lambda_{m'}}^{4N}\|_{{\mathcal S}}
 \|\Psi_{\lambda_{m'}}^{4N} c\|_{2}\nonumber\\
 & &
 +    \|A-A_N\|_{{\mathcal S}}
   \| \Psi_{\lambda_m}^{4N} c\|_{2},
 \end{eqnarray}
 where the last inequality follows from \eqref{inverse.lm.pf.eq2} and Proposition \ref{beurling.prop}, cf. \cite{sjostrand, sunca11}.
Combining \eqref{inverse.lm.pf.eq1} and \eqref{inverse.lm.pf.eq3}, and then using \eqref{stability.lem.pf.eq10} and \eqref{inverse.lem.eq1}, we complete the proof of the upper bound estimate \eqref{inverse.lem.eq2} for $\|\Psi_{\lambda_m}^{4N}c\|_2$.

Write $A=( a(\lambda, \lambda'))_{\lambda, \lambda'\in V}$ and define
$h_A(n)=\sup_{\rho(\lambda, \lambda')\ge n} |a(\lambda, \lambda')|$. For $\lambda_m, \lambda_{m'}\in V_N$ with $\rho(\lambda_m, \lambda_{m'})\ge  16N$,
we obtain from \eqref{polynomialgrowth}  and the supporting property for $\Psi_{\lambda_m}^{4N}$ that
\begin{equation}\label{inverse.lm.pf.eq4}
\|\chi_{\lambda_m}^{5N} [\Psi_{\lambda_m}^{4N}, A]\chi_{\lambda_{m'}}^{4N}\|_{{\mathcal S}}=
\| \Psi_{\lambda_m}^{4N} A\chi_{\lambda_{m'}}^{4N}\|_{{\mathcal S}}
\le C h_A(\rho(\lambda_m, \lambda_{m'})/2) N^d.
\end{equation}
For $\lambda_m, \lambda_{m'}\in V_N$ with $\rho(\lambda_m, \lambda_{m'})<16N$,
\begin{eqnarray}\label{inverse.lm.pf.eq5}
 & & \|\chi_{\lambda_m}^{5N} [\Psi_{\lambda_m}^{4N}, A]\chi_{\lambda_{m'}}^{4N}\|_{{\mathcal S}}\nonumber\\
& = &
\Big\|\Big( \chi_{[0,5N]}(\rho(\lambda, \lambda_m))
  a(\lambda, \lambda') \chi_{[0, 4N]}(\rho(\lambda', \lambda_{m'})) \nonumber\\
  & & \quad \times
  \Big(\psi_0\Big(\frac{\rho(\lambda, \lambda_{m})}{4N}\Big)-
 \psi_0\Big(\frac{\rho(\lambda', \lambda_{m})}{4N}\Big)\Big)
 \Big)_{\lambda, \lambda'\in V}\Big\|_{\mathcal S}\nonumber\\
 & \le &
C N^{-1} \sum_{n=0}^{25N} h_A(n) (n+1)^{d}\nonumber\\
 & \le &  C  \|A\|_{{\mathcal B}_{r, \alpha}}
\left\{\begin{array}{ll} N^{-1} &  {\rm if}\  \alpha>1+d/r'\\
N^{-1} (\ln N)^{1/r'} & {\rm if} \ \alpha=1+d/r'\\
N^{-\alpha+d/r'} & {\rm if} \ \alpha<1+d/r',
\end{array}\right.
\end{eqnarray}
where the first inequality follows from \eqref{sum2}, and the last estimate is obtained by applying a H\"older inequality, cf. \eqref{stability.lem.pf.eq9}.
Combining \eqref{inverse.lm.pf.eq4} and \eqref{inverse.lm.pf.eq5} proves
\eqref{inverse.lem.eq3} and \eqref{inverse.lem.eq4}.
\end{proof}

Now we start to the proof of Theorem \ref{inverse.thm}.

\begin{proof}[Proof of Theorem \ref{inverse.thm}]
Let  $N\ge 2$ be  chosen later. Define
$V_{A, N}:=(V_{A,N}(\lambda_m, \lambda_{m'}))_{\lambda_m, \lambda_{m'}\in V_N}$
and write
$(V_{A, N})^l= (V_{A,N}^l(\lambda_m, \lambda_{m'}))_{\lambda_m, \lambda_{m'}\in V_N} $,
where
\begin{equation*}\label{VAN1}
V_{A,N}(\lambda_m, \lambda_{m'}) = 2\|A^{-1}\|_{{\mathcal B}(\ell^2)}
 \|\chi_{\lambda_m}^{5N} [\Psi_{\lambda_m}^{4N}, A]\chi_{\lambda_{m'}}^{4N}\|_{{\mathcal S}}
 \end{equation*}
 Then we obtain from  Proposition \ref{VNalgebra.prop}  and Lemma \ref{inverse.lem} that
 \begin{eqnarray}\label{inverse.thm.pf.eq1}
 \|V_{A, N}\|_{{\mathcal B}_{r, \alpha; N}} & \le &
  D_3  \|A^{-1}\|_{{\mathcal B}(\ell^2)} \|A\|_{{\mathcal B}_{r, \alpha}} N^{-\min(\alpha-d/r', 1)} \nonumber\\
  & &\quad
\times \left \{\begin{array}{ll} 1  & {\rm if} \ \alpha\ne 1+d/r'\\
 (\ln N)^{1/r'} & {\rm if} \ \alpha=1+d/r',
\end{array} \right.
 \end{eqnarray}
 and
 \begin{equation} \label{inverse.thm.pf.eq2}
 \|(V_{A, N})^l\|_{{\mathcal B}_{r, \alpha; N}}\le D_4^{l-1}
\|(V_{A, N})\|_{{\mathcal B}_{r, \alpha; N}}^l
 \end{equation}
where $D_3, D_4$ are absolute constants independent of matrices $A$ and integers $N$ and $l$.

Let $N_2\ge 2$ be the minimal integer satisfying \eqref{inverse.lem.eq1}
and
\begin{eqnarray}  \label{inverse.thm.pf.eq3}
1 & \hskip-0.08in \ge &  \hskip-0.08in 4 D_3D_4   \|A^{-1}\|_{{\mathcal B}(\ell^2)} \|A\|_{{\mathcal B}_{r, \alpha}} N_2^{-\min(\alpha-d/r', 1)} \nonumber\\
  & &\qquad
\times \left \{\begin{array}{ll} 1  & {\rm if} \ \alpha\ne 1+d/r'\\
 (\ln N_2)^{1/r'} & {\rm if} \ \alpha=1+d/r'.
\end{array} \right.
\end{eqnarray}
Then
\begin{eqnarray} \label{inverse.thm.pf.eq4}
N_2  & \hskip-0.08in \le  & \hskip-0.08in C  \big( \|A^{-1}\|_{{\mathcal B}(\ell^2)} \|A\|_{{\mathcal B}_{r, \alpha}}\big)^{1/\min(\alpha-d/r',1)}
\nonumber\\
& \hskip-0.08in  & \hskip-0.08in \times
\left\{
\begin{array}{ll} 1 & {\rm if} \ \alpha\ne 1+d/r'\\
 \big(\ln \big( \|A^{-1}\|_{{\mathcal B}(\ell^2)} \|A\|_{{\mathcal B}_{r, \alpha}}+1\big)\big)^{1/r'}  & {\rm if} \ \alpha= 1+d/r'. 
\end{array}\right.
\end{eqnarray}

Let
$W_{A, N_2}=\sum_{l=1}^\infty (V_{A, N_2})^l$.
By \eqref{inverse.thm.pf.eq1}, \eqref{inverse.thm.pf.eq2} and \eqref{inverse.thm.pf.eq3},
we have
 \begin{equation} \label{inverse.thm.pf.eq5}
 \|(V_{A, N_2})^l\|_{{\mathcal B}_{r, \alpha; N_2}}\le
C  2^{-l}, \ l\ge 1,
\end{equation}
which implies that
\begin{equation} \label{inverse.thm.pf.eq6}
 \|W_{A, N_2}\|_{{\mathcal B}_{r, \alpha; N_2}}\le C.
\end{equation}

For any $ \lambda_m \in V_{N_2}$ and $ c\in \ell^2(G)$, applying
\eqref{inverse.lem.eq2} repeatedly we obtain
\begin{eqnarray}\label{inverse.thm.pf.eq7}
\|\Psi_{\lambda_m}^{4N_2} c\|_{2}
 & \hskip-0.08in \le &  \hskip-0.08in 2\|A^{-1}\|_{{\mathcal B}(\ell^2)} \|\Psi_{\lambda_m}^{N_2} Ac\|_{2}
+\sum_{\lambda_{m'}\in V_{N_2}} V_{A, N_2}(\lambda_m, \lambda_{m'}) \|\Psi_{\lambda_{m'}}^{4N_2} c\|_{2}\nonumber\\
& \hskip-0.08in \le & \hskip-0.08in \cdots \nonumber\\
& \hskip-0.08in \le & \hskip-0.08in 2\|A^{-1}\|_{{\mathcal B}(\ell^2)} \|\Psi_{\lambda_m}^{N_2} Ac\|_{2}
+ \sum_{\lambda_{m'}\in V_{N_2}}
V_{A, N_2}^{k+1}(\lambda_m, \lambda_{m'}) \|\Psi_{\lambda_{m'}}^{4N_2} c\|_{2}
\nonumber \\
& & \hskip-0.08in
+2\|A^{-1}\|_{{\mathcal B}(\ell^2)} \sum_{l=1}^k \sum_{\lambda_{m'}\in V_{N_2}}
V_{A, N_2}^l(\lambda_m, \lambda_{m'}) \|\Psi_{\lambda_{m'}}^{4N_2} Ac\|_{2}, \ \ k\ge 2.
\end{eqnarray}
Using the argument used to prove  the first conclusion in Proposition  
\ref{beurling.prop},
 we have
\begin{eqnarray}\label{inverse.thm.pf.eq8}
& &
\sum_{\lambda_{m'}\in V_N}
V_{A, N_2}^{k+1}(\lambda_m, \lambda_{m'}) \|\Psi_{\lambda_{m'}}^{4N_2} c\|_{2}\nonumber\\
&\hskip-.008in \le & \hskip-.08in\sum_{\lambda_{m'}\in V_{N_2}}
V_{A, N_2}^{k+1}(\lambda_m, \lambda_{m'}) \| c\|_{2}
\le   C \|(V_{A, N_2})^{k+1}\|_{{\mathcal B}_{r, \alpha; N_2}} \|c\|_2.
\end{eqnarray}
Taking limit in \eqref{inverse.thm.pf.eq7}, we obtain from
 \eqref{inverse.thm.pf.eq5}  and \eqref{inverse.thm.pf.eq8}
 that
\begin{eqnarray}\label{inverse.thm.pf.eq9}
\|\Psi_{\lambda_m}^{4N_2} c\|_{2}
 & \hskip-0.08in \le & \hskip-0.08in  2\|A^{-1}\|_{{\mathcal B}(\ell^2)} \|\Psi_{\lambda_m}^{4N_2} Ac\|_{2}\nonumber\\
 & & \hskip-0.08in
+2\|A^{-1}\|_{{\mathcal B}(\ell^2)}  \sum_{\lambda_{m'}\in V_{N_2}}
W_{A, N_2}(\lambda_m, \lambda_{m'}) \|\Psi_{\lambda_{m'}}^{4N_2} Ac\|_{2},
\end{eqnarray}
where $W_{A,N_2}=\big(W_{A, N_2}(\lambda_m, \lambda_{m'})\big)_{\lambda_m, \lambda_{m'}\in V_{N_2}}$.

Write $A^{-1}= (d(\lambda',\lambda))_{\lambda', \lambda\in V}$ and
 set $d_\lambda= (d(\lambda', \lambda))_{\lambda'\in V}, \lambda\in V$.
 Take  $\lambda, \lambda'\in V$ and let $\lambda_m\in V_{N_2}$ be so chosen that
 \begin{equation}\label{inverse.thm.pf.eq10}
\rho(\lambda', \lambda_m)\le 2N_2. \end{equation}
  The existence of such a fusion vertex $\lambda_m$ follows from the covering property
 in Proposition \ref{prop-counting}.
 Applying \eqref{inverse.thm.pf.eq9} with $c$ replaced by $d_\lambda$, we obtain
 \begin{eqnarray}\label{inverse.thm.pf.eq11}
 |d(\lambda', \lambda)| & \le &  \|\Psi_{\lambda_m}^{4N_2} d_\lambda\|_2
 \le 2\|A^{-1}\|_{{\mathcal B}(\ell^2)} \Big|\psi_0\Big(\frac{\rho(\lambda_m, \lambda)}{4N_2}\Big)\Big|\nonumber\\
 & &
 + 2\|A^{-1}\|_{{\mathcal B}(\ell^2)}\sum_{\lambda_{m'}\in V_{N_2}} W_{A, N_2}(\lambda_m, \lambda_{m'}) \Big|\psi_0\Big(\frac{\rho(\lambda_{m'}, \lambda)}{4N_2}\Big)\Big|.
 \end{eqnarray}
Therefore
\begin{eqnarray} \label{inverse.thm.pf.eq12}
\sup_{\lambda', \lambda\in V} |d(\lambda',\lambda)| & \hskip-0.08in \le & \hskip-0.08in
2\|A^{-1}\|_{{\mathcal B}(\ell^2)}\Big(1+
\Big(\sup_{\lambda_m, \lambda_{m'}\in V_{N_2}} |W_{A, N_2}(\lambda_m, \lambda_{m'})| \Big)\nonumber\\
& & \ \ \times
\big(\sup_{\lambda\in V}\sum_{\lambda_{m'}\in V_{N_2}}\chi_{B(\lambda_{m'}, 4N_2)}(\lambda)\big)\Big)\nonumber\\
& \le &  C\|A^{-1}\|_{{\mathcal B}(\ell^2)}, 
\end{eqnarray}
where the last inequality follows from \eqref{inverse.thm.pf.eq6} and Proposition
\ref{prop-counting}.
For $n\ge 12$, it follows from  \eqref{inverse.thm.pf.eq10} and \eqref{inverse.thm.pf.eq11} that
\begin{eqnarray} \label{inverse.thm.pf.eq13}
\sup_{\rho(\lambda', \lambda)\ge n N_2} |d(\lambda,\lambda')|
 &\hskip-0.08in  \le & \hskip-0.08in 2\|A^{-1}\|_{{\mathcal B}(\ell^2)} g_{A, N_2}(n/2)
\sup_{\lambda\in V}\sum_{\lambda_{m'}\in V_{N_2}}\chi_{B(\lambda_{m'}, 4N_2)}(\lambda)\nonumber\\
& \hskip-0.08in \le  & \hskip-0.08in  C \|A^{-1}\|_{{\mathcal B}(\ell^2)} g_{A, N_2}(n/2),
\end{eqnarray}
where
$g_{A, N_2}(n)= \sup_{\rho(\lambda_m, \lambda_{m'})\ge Nn} |W_{A,N_2}(\lambda_m, \lambda_{m'})|$.

Observe that
$$\|A^{-1}\|_{{\mathcal B}_{r, \alpha}}
  \le C  N_2^{\alpha+d/r}
    \Big(\sum_{m\ge 0} \Big(\sup_{\rho(\lambda, \lambda')\ge mN_2} |d(\lambda,\lambda')| \Big)^r (m+1)^{\alpha r+d-1}\Big)^{1/r}$$
for $1\le r<\infty$, and
$$\|A^{-1}\|_{{\mathcal B}_{r, \alpha}}
  \le   CN_2^\alpha  \sup_{m\ge 0} \Big(\sup_{\rho(\lambda, \lambda')\ge mN_2} |d(\lambda,\lambda')| \Big) (m+1)^{\alpha}$$
for $r=\infty$.  Combining the above two estimates with
 \eqref{inverse.thm.pf.eq6}, \eqref{inverse.thm.pf.eq12} and \eqref{inverse.thm.pf.eq13},
 we obtain
\begin{equation}\label{inverse.thm.pf.eq14}
\|A^{-1}\|_{{\mathcal B}_{r, \alpha}}
\le C \|A^{-1}\|_{{\mathcal B}(\ell^2)}
  N_2^{\alpha+d/r}.
\end{equation}
Hence the desired estimate
\eqref{inverse.thm.eq1} follows from \eqref{inverse.thm.pf.eq4}
and \eqref{inverse.thm.pf.eq14}.
\end{proof}

\section{Norm-controlled powers}
\label{power.section}

 By \eqref{power.preeq111}, norms of powers $A^n, n\ge 1$, of a matrix  $A\in {\mathcal B}_{r,\alpha}$ with $1\le r\le \infty$ and $\alpha>d(1-1/r)$ are dominated by a subexponential function.
In this  section, we  show that norms of powers $A^n, n\ge 1$, are controlled by a  polynomial when the counting measure is normal.

\begin{thm}\label{power.thm}  Let  $1\le r\le \infty, r'=r/(r-1)$, ${\mathcal G}$ be a connected simple graph with Beurling dimension $d$
and normal  counting measure $\mu$, and let $A\in {\mathcal B}_{r, \alpha}({\mathcal G})$ with $\alpha>d/r'$.
Then there exists an absolute positive constant $C$, independent of matrices $A$ and integers $n\ge 1$,  such that
\begin{eqnarray}\label{power.thm.eq1}
\frac{\|A^n\|_{{\mathcal B}_{r, \alpha}}} { \|A\|_{{\mathcal B}(\ell^2)}^{n}}
 & \hskip-0.08in \le & \hskip-0.08in C     n
\Big(
\frac{ n
\|A\|_{{\mathcal B}_{r, \alpha}}}{
\|A\|_{{\mathcal B}(\ell^2)} } \Big)^{(\alpha+d/r)/\min(\alpha-d/r',1)}\nonumber\\
&\hskip-0.08in  & \hskip-0.08in   \times \left\{\begin{array}{ll} 1 &  {\rm if} \ \alpha\ne d/r'+1\\
\Big(\ln\Big( \frac{ n
\|A\|_{{\mathcal B}_{r, \alpha}}}{
\|A\|_{{\mathcal B}(\ell^2)} }+1\Big)\Big)^{(d+1)/r'} & {\rm if} \ \alpha=d/r'+1\end{array}\right.\end{eqnarray}
hold for all integers $n\ge 1$.
\end{thm}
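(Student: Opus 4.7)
The plan is to reduce the problem to an estimate on the resolvent and apply the norm-controlled inversion of Theorem~\ref{inverse.thm}. The starting point is the Cauchy integral representation: for $R>\|A\|_{{\mathcal B}(\ell^2)}$, the Neumann expansion $(\mu I-A)^{-1}=\sum_{k\ge 0}\mu^{-k-1}A^k$ converges in ${\mathcal B}(\ell^2)$, so for each pair $\lambda',\lambda\in V$ the matrix entry identity
\[
[A^n]_{\lambda',\lambda}=\frac{1}{2\pi i}\oint_{|\mu|=R}\mu^n\bigl[(\mu I-A)^{-1}\bigr]_{\lambda',\lambda}\,d\mu
\]
holds. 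Taking the supremum over $\rho(\lambda',\lambda)\ge k$ inside the integral and then computing the weighted $\ell^r$-norm in $k$ via Minkowski's integral inequality yields
\[
\|A^n\|_{{\mathcal B}_{r,\alpha}}\le R^{n+1}\sup_{|\mu|=R}\|(\mu I-A)^{-1}\|_{{\mathcal B}_{r,\alpha}}.
\]

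Next I would estimate the two norms that feed into Theorem~\ref{inverse.thm}. On the circle $|\mu|=R$ the Neumann series gives $\|(\mu I-A)^{-1}\|_{{\mathcal B}(\ell^2)}\le (R-\|A\|_{{\mathcal B}(\ell^2)})^{-1}$, while splitting $\mu I-A$ into its diagonal and $A$-part shows $\|\mu I-A\|_{{\mathcal B}_{r,\alpha}}\le C(R+\|A\|_{{\mathcal B}_{r,\alpha}})$. Using the embedding $\|A\|_{{\mathcal B}(\ell^2)}\le C\|A\|_{{\mathcal B}_{r,\alpha}}$ from \eqref{beurling.prop.eq1+}, the second quantity is bounded by $C\|A\|_{{\mathcal B}_{r,\alpha}}$ once $R\le 2\|A\|_{{\mathcal B}(\ell^2)}$. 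Theorem~\ref{inverse.thm} with exponent $\theta=(\alpha+d/r)/\min(\alpha-d/r',1)$ then delivers
\[
\|(\mu I-A)^{-1}\|_{{\mathcal B}_{r,\alpha}}\le C\,\frac{1}{R-\|A\|_{{\mathcal B}(\ell^2)}}\left(\frac{\|A\|_{{\mathcal B}_{r,\alpha}}}{R-\|A\|_{{\mathcal B}(\ell^2)}}\right)^{\theta}
\]
(with an extra logarithmic factor exactly in the borderline case $\alpha=1+d/r'$, inherited verbatim from Theorem~\ref{inverse.thm}).

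The radius is now tuned to the power $n$ by choosing $R=\|A\|_{{\mathcal B}(\ell^2)}(1+1/n)$, which simultaneously keeps $R^{n+1}\le e(1+1/n)\|A\|_{{\mathcal B}(\ell^2)}^{n+1}$ bounded by $C\|A\|_{{\mathcal B}(\ell^2)}^n\|A\|_{{\mathcal B}(\ell^2)}$ and produces the gap $R-\|A\|_{{\mathcal B}(\ell^2)}=\|A\|_{{\mathcal B}(\ell^2)}/n$. Substituting back gives
\[
\|A^n\|_{{\mathcal B}_{r,\alpha}}\le C\,\|A\|_{{\mathcal B}(\ell^2)}^n\cdot n\cdot\left(\frac{n\|A\|_{{\mathcal B}_{r,\alpha}}}{\|A\|_{{\mathcal B}(\ell^2)}}\right)^{\theta},
\]
which is exactly \eqref{power.thm.eq1}; the borderline case $\alpha=1+d/r'$ acquires the announced $(\log)^{(d+1)/r'}$ factor for the same reason as in Theorem~\ref{inverse.thm}.

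I expect the main (minor) obstacle to be a careful justification of the entrywise Cauchy formula together with the exchange of sup over vertices with the contour integral and the $L^r$ weighted norm. The latter is handled by Minkowski's integral inequality once one notes that $\mu\mapsto h_{(\mu I-A)^{-1}}(k)$ is lower semicontinuous. Apart from this bookkeeping, the whole argument is a one-shot reduction to Theorem~\ref{inverse.thm}, so the sharpness of the exponent $\theta$ in the power estimate is a direct consequence of the sharpness of the norm-controlled inversion, explaining why Example~\ref{agamma.example} also certifies that the exponent $\alpha+d/r$ in \eqref{power.thm.eq1} cannot be lowered by more than one.
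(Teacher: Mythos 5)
Your proposal is correct and follows essentially the same route as the paper: represent $A^n$ by the Cauchy integral over the circle $|z|=(1+1/n)\|A\|_{{\mathcal B}(\ell^2)}$, bound $\|(zI-A)^{-1}\|_{{\mathcal B}(\ell^2)}\le n/\|A\|_{{\mathcal B}(\ell^2)}$ and $\|zI-A\|_{{\mathcal B}_{r,\alpha}}\le C\|A\|_{{\mathcal B}_{r,\alpha}}$ on that contour, apply the norm-controlled inversion of Theorem~\ref{inverse.thm} to the resolvent, and integrate. The only cosmetic difference is that the paper bounds the Banach-algebra-valued contour integral directly by the triangle inequality in ${\mathcal B}_{r,\alpha}$ rather than entrywise with Minkowski, which sidesteps the measurability/semicontinuity bookkeeping you mention.
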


For matrices $A\in {\mathcal B}_{r, \alpha}$ with $\alpha>d/r'+1$,
we obtain from  Theorem \ref{power.thm} that
\begin{equation}\label{power.cor.eq1}
\|A^n\|_{{\mathcal B}_{r, \alpha}}\le
C \Big(\frac{\|A\|_{{\mathcal B}_{r, \alpha}}}{\|A\|_{{\mathcal B}(\ell^2)} } \Big)^{\alpha+d/r}
   n^{\alpha+d/r+1} \|A\|_{{\mathcal B}(\ell^2)}^{n}, \ n\ge 1.
\end{equation}
As shown in  \eqref{power.cor.eq2} below, the estimate  \eqref{power.cor.eq1}
 on powers of matrices in the Beurling algebra  ${\mathcal B}_{r, \alpha}$  is almost optimal.
Let $\delta$ be the delta function  with $\delta(0)=1$ and $\delta(k)\ne 0$ for all nonzero integers $k$.
 Then for the matrix $A_1=( \delta(i-j-1))_{i,j\in \ZZ}$, we have that
 $(A_1)^n=(a(i-j-n))_{i,j\in \ZZ}, n\ge 1$,  and hence
\begin{eqnarray}\label{power.cor.eq2}
\|(A_1)^n\|_{{\mathcal B}_{r, \alpha}} &\hskip-0.08in  = & \hskip-0.08in \left\{\begin{array}{ll}
\Big(\sum_{k=0}^n (k+1)^{\alpha r}\Big)^{1/r}  & {\rm if} \ 1\le r<\infty\\
(n+1)^\alpha & {\rm if} \ r=\infty\end{array}\right.\nonumber\\
 & \hskip-0.08in \ge &  \hskip-0.08in C
 \Big(\frac{\|A_1\|_{{\mathcal B}_{r, \alpha}}}{\|A_1\|_{{\mathcal B}(\ell^2)} } \Big)^{\alpha+d/r}
   n^{\alpha+d/r} \|A_1\|_{{\mathcal B}(\ell^2)}^{n}, \ n\ge 1,
\end{eqnarray}
 where $C$ is an absolute constant. 

Let  
$\hat a(\xi)=\sum_{k\in \ZZ} a(k)e^{-ik\xi}$ satisfy $|\hat a(\xi)|\le 1$ for all $\xi\in \RR$
and $\sup_{k\in \ZZ} |a(k)| (1+|k|)^\alpha<\infty$ for some $\alpha>1$, and
write
$$(\hat a(\xi))^n= \sum_{k\in \ZZ} a_n(k)e^{-ik\xi}, \ n\ge 1.$$
Then there exists a positive constant $C$ independent of $n\ge 1$ such that
$$
|a_n(k)|\le C n^{\alpha+1} (1+|k|)^{-\alpha}, \ k\in \ZZ
$$
by Theorem \ref{power.thm}.
Therefore
 for any $\epsilon>0$, there exists a positive constant $C_\epsilon$ such that
\begin{equation}\label{ank.estimate}
\sum_{k\in \ZZ} |a_n(k)|\le C n^{1+\epsilon}, \ n\ge 1,
\end{equation}
cf.  \cite{randle14,  randle15, thomee69, thomee65} and references therein for
various estimates.
We remark that the above estimate for the Wiener norm of $(\hat a(\xi))^n, n\ge 1$,
was established in \cite{thomee65}, with
the polynomial exponent $1+\epsilon$  replaced by a smaller exponent  $(1-\mu/\nu)/2$,
when
 $$ \hat a(\xi)=e^{-i\alpha \xi+ i\xi^\mu q(\xi)-\gamma \xi^\nu(1+o(1))}$$
near  the origin
for some real polynomial $q$ with $q(0)\ne 0$.

Let random variables $X_n, n\ge 1$, be a stationary Markov chain  on a  spatially distributed network, which is described by a connected simple graph ${\mathcal G}=(V, E)$.
Then the probabilities  $\Pr(X_{n+1}=\lambda \mid X_{n}=\lambda')$ of going from one vertex $\lambda'$ at time $n$
to another vertex $\lambda$ at time $n+1$ is independent of $n\ge 1$,
$$ \Pr(X_{n+1}=\lambda \mid X_{n}=\lambda')= p(\lambda,\lambda'), \ \lambda, \lambda'\in V \ {\rm and} \ n\ge 1. $$
Define the transition matrix of the above stationary Markov chain by  $P=(p(\lambda,\lambda'))_{\lambda, \lambda'\in V}$.
Then by Theorem \ref{power.thm}, we have the following estimate on the probability $ \Pr(X_{m}=\lambda |X_{n}=\lambda'), m>n\ge 1$,  with the input vertex $\lambda'$ and output vertex $\lambda\in V$.

\begin{cor}\label{markov.cor}
Let   ${\mathcal G}:=(V,E)$ be a connected simple graph with Beurling dimension $d$
and normal  counting measure $\mu$, and let $X_n, n\ge 1$, be a stationary Markov chain  on the graph ${\mathcal G}$
with transition matrix $P \in {\mathcal B}_{\infty, \alpha}$ for some $\alpha>d+1$.
Then  there exists a positive constant $C_\alpha$ such that
 \begin{equation}\label{markov.cor.eq} \Pr(X_{m}=\lambda |X_{n}=\lambda')
\le  \min\Big(C_\alpha (m-n)^{\alpha+1} (\rho(\lambda, \lambda'))^{-\alpha}, 1\Big)
 \end{equation}
 for   all  $\lambda, \lambda'\in V$ and $m>n\ge 1$.
\end{cor}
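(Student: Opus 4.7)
The plan is to express the hopping probability as a matrix entry of a power of $P$ and then invoke Theorem~\ref{power.thm}. By the Markov property and stationarity, for $m>n\ge 1$ the conditional probability equals $p_{m-n}(\lambda,\lambda')$, the $(\lambda,\lambda')$-entry of $P^{m-n}$. Since $p_{m-n}(\lambda,\lambda')\in[0,1]$, the trivial bound $\Pr\le 1$ supplies one argument of the minimum in \eqref{markov.cor.eq}. For the other argument I would combine two ingredients: a pointwise off-diagonal estimate immediate from the definition~\eqref{bralpha.norm} with $r=\infty$, namely
\[ |p_{m-n}(\lambda,\lambda')| \le \|P^{m-n}\|_{{\mathcal B}_{\infty,\alpha}}\,(1+\rho(\lambda,\lambda'))^{-\alpha}, \]
together with a polynomial-in-$(m-n)$ bound on $\|P^{m-n}\|_{{\mathcal B}_{\infty,\alpha}}$.

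The latter is exactly what \eqref{power.cor.eq1} of Theorem~\ref{power.thm} delivers: with $r=\infty$, $r'=1$, and $d/r=0$, the hypothesis $\alpha>d+1$ matches the regime $\alpha>d/r'+1$, and one obtains
\[ \|P^{k}\|_{{\mathcal B}_{\infty,\alpha}} \le C\Big(\frac{\|P\|_{{\mathcal B}_{\infty,\alpha}}}{\|P\|_{{\mathcal B}(\ell^2)}}\Big)^{\!\alpha} k^{\alpha+1}\, \|P\|_{{\mathcal B}(\ell^2)}^{k}. \]
Setting $k=m-n$, multiplying by $(1+\rho(\lambda,\lambda'))^{-\alpha}$, and replacing this factor by $\rho(\lambda,\lambda')^{-\alpha}$ when $\lambda\ne\lambda'$ (at the cost of an absolute constant $2^{\alpha}$) recovers the first argument of the minimum in \eqref{markov.cor.eq}. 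The argument $\lambda=\lambda'$ is handled directly by the trivial bound.

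The main obstacle is ensuring that the preceding estimate is genuinely polynomial in $k$, which requires $\|P\|_{{\mathcal B}(\ell^2)}\le 1$; otherwise the factor $\|P\|_{{\mathcal B}(\ell^2)}^{k}$ grows exponentially in $k=m-n$ and destroys the claim. This is where the probabilistic structure of $P$ must enter. Since the entries of $P$ are nonnegative with unit column sums $\sum_\lambda p(\lambda,\lambda')=1$, one has $\|P\|_{{\mathcal B}(\ell^1)}=1$; combined with $\|P\|_{{\mathcal B}(\ell^\infty)}\le\|P\|_{\mathcal S}<\infty$ (from Proposition~\ref{beurling.prop} applied to $P\in{\mathcal B}_{\infty,\alpha}\subset{\mathcal B}_{1,0}$), a Cauchy--Schwarz computation yields $\|Pc\|_2^2\le\|P\|_{{\mathcal B}(\ell^\infty)}\|c\|_2^2$. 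For the probabilistically natural class of doubly-stochastic or reversible chains one has $\|P\|_{{\mathcal B}(\ell^\infty)}\le 1$ and hence $\|P\|_{{\mathcal B}(\ell^2)}\le 1$; in the fully general stationary case, this $\ell^2$-contractivity of $P$ is the delicate step and is the part of the argument I would scrutinize most carefully, since once it is in place the remaining factors $\|P\|_{{\mathcal B}_{\infty,\alpha}}^{\alpha}$ and $\|P\|_{{\mathcal B}(\ell^2)}^{-\alpha}$ are fixed constants that can be absorbed into $C_\alpha$.
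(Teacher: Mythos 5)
Your route is exactly the paper's: the paper gives no separate proof of Corollary \ref{markov.cor}, obtaining \eqref{markov.cor.eq} directly from Theorem \ref{power.thm} (i.e.\ \eqref{power.cor.eq1} with $r=\infty$, $r'=1$) via the identification $\Pr(X_m=\lambda\,|\,X_n=\lambda')=(P^{m-n})(\lambda,\lambda')\le \|P^{m-n}\|_{{\mathcal B}_{\infty,\alpha}}(1+\rho(\lambda,\lambda'))^{-\alpha}$ together with the trivial bound by $1$, just as you do. The $\ell^2$-contractivity point you flag ($\|P\|_{{\mathcal B}(\ell^2)}\le 1$, needed so that the factor $\|P\|_{{\mathcal B}(\ell^2)}^{m-n}$ does not destroy the uniformity of $C_\alpha$ in $m-n$) is left implicit in the paper as well — it holds for instance for doubly stochastic, symmetric, or convolution-type transition matrices, but is not a formal consequence of the stated hypotheses — so your caution there identifies a tacit assumption of the paper rather than a defect of your argument relative to it.
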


\smallskip

We finish this section with the proof of Theorem \ref{power.thm}.

\begin{proof}[Proof of Theorem \ref{power.thm}]
Let $A\in {\mathcal B}_{r, \alpha}$, and write
\begin{equation*} 
A^n =\frac{1}{2\pi i}
\int_{|z|=(1+1/n)\|A\|_{  {\mathcal B}(\ell^2)}}
z^n (zI-A)^{-1} dz.\end{equation*}
Then
\begin{equation}  \label{power.thm.pf.eq2}
\|A^n\|_{{\mathcal B}_{r, \alpha}}
\le  C
\|A\|_{{\mathcal B}(\ell^2)}^n
\int_{|z|= (1+1/n)\|A\|_{{\mathcal B}(\ell^2)}}
\| (zI-A)^{-1}\|_{{\mathcal B}_{r, \alpha}} |dz|.
\end{equation}

Observe that
for $|z|=\|A\|_{{\mathcal B}(\ell^2)} (1+1/n)$, we have
\begin{equation}  \label{power.thm.pf.eq3}
\|(zI-A)^{-1}\|_{{\mathcal B}(\ell^2)}\le  |z|^{-1} \sum_{l=0}^\infty |z|^{-l} \| A\|_{{\mathcal B} (\ell^2)}^l\le   n
(\|A\|_{{\mathcal B}(\ell^2)})^{-1},
\end{equation}
and
\begin{equation}  \label{power.thm.pf.eq4}
\|zI-A\|_{{\mathcal B}_{r, \alpha}}\le |z|+\|A\|_{{\mathcal B}_{r, \alpha}}
\le   C \|A\|_{{\mathcal B}_{r, \alpha}},
\end{equation}
where the last inequality holds by \eqref{beurling.prop.eq1+}.
By  \eqref{power.thm.pf.eq3}, \eqref{power.thm.pf.eq4} and Theorem \ref{inverse.thm}, we get
\begin{eqnarray}
\|(zI-A)^{-1}\|_{{\mathcal B}_{r, \alpha}}
 &\hskip-0.08in  \le & \hskip-0.08in  C    n (\|A\|_{{\mathcal B}(\ell^2)} )^{-1}
\Big(
\frac{ n
\|A\|_{{\mathcal B}_{r, \alpha}}}{
\|A\|_{{\mathcal B}(\ell^2)} } \Big)^{(\alpha+d/r)/\min(\alpha-d/r',1)}\nonumber\\
&\hskip-0.08in  & \hskip-0.08in   \times \left\{\begin{array}{ll} 1 &  {\rm if} \ \alpha\ne d/r'+1\\
\Big(\ln\Big( \frac{ n
\|A\|_{{\mathcal B}_{r, \alpha}}}{
\|A\|_{{\mathcal B}(\ell^2)} }+1\Big)\Big)^{(d+1)/r'} & {\rm if} \ \alpha=d/r'+1.\end{array}\right.\end{eqnarray}
This together with \eqref{power.thm.pf.eq2} proves \eqref{power.thm.eq1}.
\end{proof}

\noindent{\bf Acknowledgement}:\  The authors would like to thank Professors Karlheinz Gr\"ochenig, Andreas Klotz  and Jose Luis Romero for their help and suggestion for the improvement of the manuscript.

\begin{thebibliography}{999}

\bibitem{aky02} I. F. Akyildiz, W. Su, Y. Sankarasubramaniam and E. Cayirci,
Wireless sensor networks: a survey,
{\em  Comput. Netw.}, {\bf  38}(2002), 393--422.

\bibitem{akramjfa09}
A. Aldroubi, A. Baskakov and I. Krishtal, Slanted matrices, Banach frames,
and sampling, {\em J. Funct. Anal.}, {\bf 255}(2008), 1667--1691.

\bibitem{aldroubisiamreview01}
    A. Aldroubi and K. Gr\"ochenig, Nonuniform sampling and reconstruction in shift-invariant
spaces, {\em SIAM Review}, {\bf 43}(2001), 585--620.

\bibitem{aronson63} D. G. Aronson,  On the stability of certain finite difference approximations to parabolic systems of differential equations, {\em  Numer. Math.}, {\bf  5}(1963),  118--137.

\bibitem{balan} R. Balan,  The  noncommutative Wiener lemma,
linear independence, and special properties of the algebra of
time-frequency shift operators,  {\em Trans. Amer. Math. Soc.},
{\bf 360}(2008), 3921--3941.

\bibitem{bamieh02}
B. Bamieh, F. Paganini, and M. A. Dahleh, Distributed control of spatially-invariant systems,
{\em IEEE Trans. Autom. Control}, {\bf 47}(2002), 1091--1107.

\bibitem{baskakov90} A. G. Baskakov,  Wiener's theorem and
asymptotic estimates for elements of inverse matrices, {\em
Funktsional. Anal. i Prilozhen},  {\bf 24}(1990),  64--65;
translation in {\em Funct. Anal. Appl.},  {\bf 24}(1990),
222--224.

\bibitem{baskakov14} A. G. Baskakov and I. A. Krishtal,  Memory estimation of inverse operators, {\em  J. Funct. Anal.},
 {\bf 267}(2014),  2551--2605.

\bibitem{beurling49} A.  Beurling,  On  the  spectral  synthesis  of  bounded  functions,
{\em Acta  Math.}, {\bf 81}(1949), 225--238.

\bibitem{blackadarcuntz91}
B. Blackadar and J. Cuntz,  Differential Banach algebra norms
and smooth subalgebras of $C^*$-algebras,  {\em J. Operator Theory}, {\bf 26}(1991),
255--282.

\bibitem{chen15} S. Chen, R. Varma, A. Sandryhaila and  J. Kovacevic,
Discrete signal processing on graphs: Sampling theory, {\em
IEEE Trans. Signal Proc.}, {\bf  63}(2015), 6510--6523.

\bibitem{chengsun} C. Cheng, Y. Jiang and Q. Sun, Spatially distributed sampling and reconstruction, Arxiv preprint, arXiv:1511.08541

\bibitem{chong2003} C. Chong and S. Kumar, Sensor networks:
evolution, opportunities, and challenges, {\em
Proc. IEEE}, {\bf 91}(2003),  1247--1256.

\bibitem{christ88} M. Christ, Inversion in some algebra of singular integral operators,
{\em Rev. Mat. Iberoamericana}, {\bf 4}(1988), 219--225.

\bibitem{christensenbook} O. Christensen, {\em An Introduction to Frames and Riesz Bases}, Birkh\"auser Basel, 2003.

    \bibitem{chungbook} F. R. K. Chung, {\em Spectral Graph Theory},
American Mathematical Society,  1997.

\bibitem{dahlke} S. Dahlke, M. Fornasier and K. Gr\"ochenig, Optimal adaptive computations in the Jaffard algebra and
localized frames, {\em J. Approx. Theory}, {\bf 162}(2010), 153--185.

\bibitem{randle14} P. Diaconis,  and L.  Saloff-Coste, Convolution powers of complex functions on $\ZZ$,
{\em Math. Nachr.},  {\bf 287}(2014), 1106--1130.


\bibitem{Dullerud2004}
G. E. Dullerud and R. D'Andrea, Distributed control of heterogeneous systems, {\it IEEE Trans. Autom. Control,}  {\bf 49}(2004), 2113--2128.

\bibitem{farrell10} B. Farrell  and T. Strohmer,
Inverse-closedness of a Banach algebra of integral operators on the Heisenberg group, {\em J. Operator Theory},
{\bf 64}(2010), 189--205.

\bibitem{grochenig10}
K. Gr\"ochenig, Wiener's lemma: theme and variations, an introduction to
spectral invariance and its applications, In {\em Four Short Courses on Harmonic Analysis: Wavelets, Frames, Time-Frequency Methods, and
Applications to Signal and Image Analysis}, edited by P. Massopust and B. Forster,
Birkhauser, Boston 2010.

\bibitem{grochenig06} K. Gr\"ochenig, Time-frequency analysis of Sj\"ostrand's class, {\em Rev. Mat. Iberoamericana}, {\bf22}(2006), 703--724.


\bibitem{grochenigbook} K. Gr\"ochenig, {\em  Foundations of Time-Frequency Analysis}, Birkh\"auser Basel, 2001.

\bibitem{grochenigklotz10} K. Gr\"ochenig and A. Klotz,
Noncommutative approximation: inverse-closed subalgebras and off-diagonal decay of matrices,
{\em Constr. Approx.}, {\bf 32}(2010), 429--466.

\bibitem{gltams06} K. Gr\"ochenig and M. Leinert, Symmetry of matrix
algebras and symbolic calculus for infinite matrices, {\em Trans.
Amer. Math. Soc.},  {\bf 358}(2006),  2695--2711.

\bibitem{grochenig13-1} K. Gr\"ochenig and A. Klotz, Norm-controlled inversion in smooth Banach algebras I,
{\em J. London Math. Soc.}, {\bf 88}(2013), 49--64.

\bibitem{grochenig14-1} K. Gr\"ochenig and A. Klotz, Norm-controlled inversion in smooth Banach algebras II,
{\em Math. Nachr.}, {\bf 287}(2014), 917--937.

\bibitem{hebner2017} R. Hebner, The power grid in 2030, {\em IEEE Spectrum},  51--55, April 2017.

\bibitem{jaffard90} S. Jaffard, Properi\'et\'es des matrices bien
localis\'ees pr\'es de leur diagonale et quelques applications,
{\em Ann. Inst. Henri Poincar\'e}, {\bf 7}(1990), 461--476.

\bibitem{keith08} S. Keith and X. Zhong, The Poincar\'e inequality is an open ended condition,
{\em Ann. Math.},
{\bf 167}(2008), 575--599.

\bibitem{kissin94} E. Kissin and V. S. Shulman,  Differential properties of some dense subalgebras of $C^*$-algebras, {\em Proc.
Edinburgh Math. Soc.}, {\bf 37}(1994),  399--422.

\bibitem{Krishtal11} I. Krishtal, Wiener's lemma: pictures at exhibition,
{\em Rev. Un. Mat. Argentina}, {\bf 52}(2011), 61--79.

\bibitem{kristal15}  I. Krishtal, T. Strohmer and T. Wertz,
Localization of matrix factorizations,
{\em Found. Comput. Math.}, {\bf 15}(2015), 931--951.

\bibitem{ms79} R. A. Macias and C. Segovia, Lipschitz functions on spaces of homogeneous type, {\em Adv. Math.}, {\bf 33}(1979), 257--270.

\bibitem{mjieee08}
N. Motee and A. Jadbabaie, Optimal control of spatially distributed systems, \emph{IEEE Trans.  Autom. Control},   {\bf 53}(2008),
 1616--1629.

\bibitem{moteesun} N. Motee and Q. Sun, Sparsity and spatial localization measures for spatially distributed systems,
{\em SIAM J. Control Optim.}, {\bf 55}(2017), 200--235.

\bibitem{napoli99}
M. Napoli, B. Bamieh, and M. Dahleh, Optimal control of arrays of microcantilevers, {\em J. Dyn. Syst. Meas. Control},
{\bf 121}(1999), 686--690.

\bibitem{nikolski99}
N. Nikolski, In search of the invisible spectrum, {\em Ann. Inst. Fourier (Grenoble)}, {\bf 49}(1999),
1925--1998.

\bibitem{pesenson08} I. Pesenson,  Sampling in Paley-Wiener spaces on combinatorial graphs, {\em
Trans. Amer. Math. Soc.}, {\bf  360}(2008),  5603--5627.

\bibitem{randle15} E. Randles and L. Saloff-Coste,  On the convolution powers of complex functions on $\ZZ$,
{\em J. Fourier Anal. Appl.},  {\bf 21}(2015),  754--798.

\bibitem{romero09} J. L. Romero,
Explicit localization estimates for spline-type spaces, {\em  Sampl. Theory
Signal Image Process.}, {\bf 8}(2009), 249--259.

\bibitem{moura14} A. Sandryhaila and J. Moura,
Discrete signal processing on graphs: frequency analysis,
{\em IEEE Trans. Signal Proc.}, {\bf 62}(2014), 3042--3054.

\bibitem{moura13} A. Sandryhaila and J. Moura,
 Discrete signal processing on graphs,   {\em IEEE Trans. Signal Proc.}, {\bf 61}(2013), 1644--1656.

\bibitem{shincjfa09} C. E. Shin and Q. Sun,
 Stability of localized operators, {\em J. Funct. Anal.}, {\bf 256}(2009), 2417--2439.

 \bibitem{shinsun13}  C. E. Shin and Q. Sun,  Wiener's lemma: localization and various approaches,
 {\em  Appl. Math. J. Chinese Univ.}, {\bf 28}(2013),  465--484.

 \bibitem{sjostrand}  J. Sj\"ostrand, Wiener type algebra of pseudodifferential operators, Centre de Mathematiques,
Ecole Polytechnique, Palaiseau France, Seminaire 1994--1995, December
1994.

 \bibitem{shuman13} D. I. Shuman, S. K. Narang, P. Frossard, A. Ortega, and P. Vandergheynst,  The emerging field of signal processing on graphs: Extending high-dimensional data analysis to networks and other irregular domains,
{\em IEEE Signal Process. Mag.},  {\bf 30}(2013), 83--98.

\bibitem{smale07}
S. Smale and F. Cucker, Emergent behavior in flocks,  {\em  IEEE Trans. Autom. Control}, {\bf  52}(2007), 852--862.

\bibitem{stafney} J. D. Stafney, An unbounded inverse property in the algebra of absolutely convergent Fourier series,
{\em Proc. Amer. Math. Soc.}, {\bf 18}(1967), 497--498.

\bibitem{sunaicm14} Q. Sun, Localized nonlinear functional equations and two sampling problems in signal processing,
{\em Adv. Comput. Math.},  {\bf 40}(2014), 415--458.

\bibitem{sunca11}
Q. Sun, Wiener's lemma for infinite matrices II, {\em Constr. Approx.}, {\bf 34}(2011), 209--235.

\bibitem{suntams07} Q. Sun, Wiener's lemma for infinite matrices,
{\em Trans. Amer. Math. Soc.},  {\bf 359}(2007), 3099--3123.

\bibitem{sunsiam06}    Q. Sun, Non-uniform average sampling and reconstruction of signals with finite rate of innovation, {\em SIAM J. Math. Anal.}, {\bf 38}(2006), 1389--1422.

\bibitem{suncasp05} Q. Sun, Wiener's lemma for infinite matrices with polynomial off-diagonal decay, {\em C. Acad. Sci. Paris Ser I}, {\bf 340}(2005), 567--570.

\bibitem{suntang} Q. Sun and W.-S. Tang. Nonlinear frames and sparse reconstructions in Banach spaces,  {\em J. Fourier Anal. Appl.},
DOI: 10.1007/s00041-016-9501-y

\bibitem{xiansun14}
Q. Sun and J. Xian, Rate of innovation for (non-)periodic signals and optimal lower stability bound for filtering,
  {\em J. Fourier Anal. Appl.}, {\bf 20}(2014), 119--134.

\bibitem{tesserajfa10} R. Tessera,  Left inverses of matrices with polynomial decay, {\em  J. Funct. Anal.}, {\bf 259}(2010),  2793--2813.

\bibitem{thomee69} V. Thomee,
Stability theory for partial difference operators,
{\em  SIAM Review}, {\bf 11}(1969), 152--195.

\bibitem{thomee65} V. Thomee,
Stability of difference schemes in the maximum-norm,
{\em J. Diff. Eqns.}, {\bf 1}(1965), 273--292.

\bibitem{tyson01} J.  T.  Tyson,   Metric  and  geometric  quasiconformality  in  Ahlfors  regular
Loewner spaces, {\em Conform. Geom. Dyn.},
{\bf 5}(2001), 21--73.


 \bibitem{yyh13} Da. Yang, Do. Yang and  G. Hu,
{\em  The Hardy Space $H^1$ with Non-doubling Measures and Their Applications},
Lecture Notes in Mathematics 2084, Springer, 2013.

\bibitem{yick2008} J. Yick, B.  Mukherjee  and
D. Ghosal, Wireless sensor network survey, {\em  Comput. Netw.}, {\bf 52}(2008),  2292--2330.

\end {thebibliography}
\end{document}